\DeclareMathOperator{\Hom}{Hom}
\DeclareMathOperator{\diag}{diag}
\DeclareMathOperator{\im}{Im}
\DeclareMathOperator{\id}{Id}
\DeclareMathOperator{\vol}{Vol}
\DeclareMathOperator{\pr}{pr}
\DeclareMathOperator{\SL}{SL}
\DeclareMathOperator{\PSL}{PSL}
\DeclareMathOperator{\SO}{SO}
\DeclareMathOperator{\Spin}{Spin}
\DeclareMathOperator{\Sp}{Sp}
\DeclareMathOperator{\wt}{wt}
\DeclareMathOperator{\hw}{hw}
\DeclareMathOperator{\lcm}{lcm}
\DeclareMathOperator{\Tr}{Tr}
\DeclareMathOperator{\Lie}{Lie}
\newtheorem{Thm}{Theorem}[section]
\newtheorem{Pro}[Thm]{Proposition}
\newtheorem{Lem}[Thm]{Lemma}
\newtheorem{Cor}[Thm]{Corollary}
\theoremstyle{definition}
\newtheorem{Def}[Thm]{Definition}
\newtheorem{Ex}[Thm]{Example}
\newtheorem{Rmk}[Thm]{Remark}
\newcommand\nnfootnote[1]{%
  \begin{NoHyper}
  \renewcommand\thefootnote{}\footnote{#1}%
  \addtocounter{footnote}{-1}%
  \end{NoHyper}
}
\begin{document}
\title{Langlands Duality and Poisson-Lie Duality\\ via Cluster Theory and Tropicalization}
\author{Anton Alekseev\and Arkady Berenstein\and Benjamin Hoffman\and Yanpeng Li}

\newcommand{\Addresses}{{
  \bigskip
  \footnotesize

  \textsc{Section of Mathematics, University of Geneva, 2-4 rue du Li\`evre, c.p. 64, 1211 Gen\`eve 4, Switzerland}\par\nopagebreak
  \textit{E-mail address}: \texttt{Anton.Alekseev@unige.ch}

  \medskip

  \textsc{Department of Mathematics, University of Oregon, Eugene, OR 97403, USA}\par\nopagebreak
  \textit{E-mail address}: \texttt{arkadiy@uoregon.edu}

  \medskip

  \textsc{Department of Mathematics, Cornell University, 310 Malott Hall, Ithaca, NY 14853, USA}\par\nopagebreak
  \textit{E-mail address}: \texttt{bsh68@cornell.edu}
  
  \medskip

  \textsc{Section of Mathematics, University of Geneva, 2-4 rue du Li\`evre, c.p. 64, 1211 Gen\`eve 4, Switzerland}\par\nopagebreak
  \textit{E-mail address}: \texttt{yanpeng.li@unige.ch}

}}
\date{}
\maketitle

\nnfootnote{\emph{Keywords:} Langlands dual, Poisson-Lie dual, Cluster Algebras, Potentials, Tropicalization}

\begin{abstract}
    Let $G$ be a connected semisimple Lie group. There are two natural duality constructions that assign to it the Langlands dual group $G^\vee$ and the Poisson-Lie dual group $G^*$. The main result of this paper is the following relation between these two objects: the integral cone defined by the cluster structure and the Berenstein-Kazhdan potential on the double Bruhat cell $G^{\vee; w_0, e} \subset G^\vee$ is isomorphic to the integral Bohr-Sommerfeld cone defined by the Poisson structure on the partial tropicalization of $K^* \subset G^*$ (the Poisson-Lie dual of the compact form $K \subset G$). By \cite{BKII}, the first cone parametrizes the canonical bases of irreducible $G$-modules. The corresponding points in the second cone belong to integral symplectic leaves of the partial tropicalization labeled by the highest weight of the representation.
    As a by-product of our construction, we show that symplectic volumes of generic symplectic leaves in the partial tropicalization of $K^*$ are equal to symplectic volumes of the corresponding coadjoint orbits in $\Lie(K)^*$.

    To achieve these goals, we make use of (Langlands dual) double cluster varieties defined by Fock and Goncharov \cite{FG2}. These are pairs of cluster varieties whose seed matrices are transpose to each other. There is a naturally defined isomorphism between their tropicalizations. The isomorphism between the cones described above is a particular instance of such an isomorphism associated to the double Bruhat cells $G^{w_0, e} \subset G$ and $G^{\vee; w_0, e} \subset G^\vee$.
\end{abstract}

\tableofcontents

\section{Introduction}

Let $K$ be a compact connected semisimple Lie group. There are two very interesting duality constructions which involve $K$. First, one can associate to it the Langlands dual group $K^\vee$ corresponding to the root system dual to the one of $K$. Second, the group $K$ carries the standard Poisson-Lie structure $\pi_{K}$. As a Poisson-Lie group, it admits the dual Poisson-Lie group $K^*$.

The group $K^\vee$ is a compact connected semisimple Lie group while the group $K^*$ is solvable. Despite this fact, they share some common features. Let $T \subset K$ be a maximal torus of $K$ and $\mathfrak{t}=\Lie(T)$ be its Lie algebra. The Lie algebra $\mathfrak{t}^\vee=\Lie(T^\vee)$ of the maximal torus $T^\vee \subset K^\vee$ is in a natural duality with $\mathfrak{t}$:
\[
    \mathfrak{t}^\vee \cong \Hom(S^1,T)^*\otimes_{\mathbb{Z}} \mathbb{R}\cong 
    \Hom(T,S^1)\otimes_{\mathbb{Z}} \mathbb{R} \cong \mathfrak{t}^*.   \]
The Lie group $K^*\cong N_- A$ is isomorphic to a semi-direct product of the maximal nilpotent subgroup $N_- \subset G=K^\mathbb{C}$ and the abelian group $A=\exp(\sqrt{-1} \mathfrak{t})$. The Lie algebra 
\[
    \sqrt{-1} \mathfrak{t} \cong \mathfrak{t}^*
\]
plays a role analogous to the one of Cartan subalgebra for the group $K^*$. The isomorphism above is induced by the invariant scalar product on $\mathfrak{k}=\Lie(K)$ used to define the standard Poisson structures on $K$ and $K^*$. 

Furthermore, both the Langlands dual group and the Poisson-Lie dual group can be used to parametrize representations of $K$ (or finite dimensional representations of $G=K^\mathbb{C}$). On one hand, by the Borel-Weil-Bott Theorem, geometric quantization of coadjoint orbits passing through dominant integral weights in $\mathfrak{t}^*$   yields all  irreducible representations of $K$. By the Ginzburg-Weinstein Theorem \cite{GW}, the Poisson spaces $K^*$ and $\mathfrak{k}^*$ are isomorphic to each other and we can extend the Borel-Weil-Bott result to $K^*$, where for a dominant integral weight $\lambda \in \mathfrak{t}^* \cong \sqrt{-1} \mathfrak{t}$ we consider the $K$-dressing orbit in $K^*$ passing through $\exp(\lambda)$.

On the other hand, beginning with the Borel subgroup $B_-^\vee \subset G^\vee =(K^\vee)^\mathbb{C}$, Berenstein and Kazhdan \cite{BKII} constructed an integral polyhedral cone $\mathcal{C}_{BK}^{G^\vee}$ together with a tropical highest weight map 
\[
    {\hw^\vee}^t: \mathcal{C}_{BK}^{G^\vee} \to  \mathfrak{t}^\vee\cong \mathfrak{t}^*.
\]
Fibers of ${\hw^\vee}^t$ parametrize canonical bases of irreducible finite dimensional representations of $G$.  

It is the goal of this paper to establish a relation between the two duality constructions described above. 

There are several tools that we are using to this effect. First, following \cite{FG2} we introduce the notion of a double cluster variety, which is a pair $\mathcal{A}$ and $\mathcal{A}^\vee$ of cluster varieties whose seed matrices are transpose to one another. There exists  natural maps between the corresponding cluster charts. After tropicalization, all of these maps match and give rise to a global duality map between the tropical cluster varieties 
\[
    \psi: \mathcal{A}^t \to (\mathcal{A}^\vee)^t. 
\]

Our main example is that of the double Bruhat cells $G^{w_0, e} \subset B_-$ and
$G^{\vee; w_0, e} \subset B^\vee_-$. In this case, the relationship between tropicalizations can be further improved: each comes equipped with a potential function called $\Phi_{BK}$ and $\Phi^\vee_{BK}$, respectively. These cut out polyhedral cones $\mathcal{C}_{BK}^G$ and $\mathcal{C}_{BK}^{G^\vee}$ in the tropical varieties. We show that our comparison map $\psi$ maps one of these cones into the other, and preserves their Kashiwara crystal structure (up to some scaling). This gives a new perspective on a result of Kashiwara \cite{MK96} and Frenkel-Hernandez \cite{FH}.

For the discussion of the Poisson-Lie dual $K^*$, we turn to the notion of partial tropicalization that we introduced in \cite{ABHL}. The partial tropicalization $PT(K^*)$ of $K^*$ is a product $\mathcal{C}^{G}_{BK}(\mathbb{R}) \times T$ of the real Berenstein-Kazhdan cone $\mathcal{C}^{G}_{BK}(\mathbb{R})$ and a torus $T$. It comes equipped with a constant Poisson structure which induces integral affine structures on symplectic leaves. Together with the structure of the weight lattice of $K$, they define a natural Bohr-Sommerfeld lattice $\Lambda\subset \mathcal{C}^G_{BK}(\mathbb{R})$.

We show that 
\[
    \psi(\Lambda) = \mathcal{C}_{BK}^{G^\vee}.
\]
That is, the integral Bohr-Sommerfeld cone $\Lambda$ defined by the Poisson-Lie data on $K^*$ is isomorphic to the integral cone $\mathcal{C}_{BK}^{G^\vee}$ defined by the cluster structure and the potential $\Phi_{BK}^\vee$ on the double Bruhat cell $G^{\vee; w_0, e}\subset G^\vee$. The isomorphism is given by the tropical duality map of the double cluster variety.

In more detail, the cone $\mathcal{C}_{BK}^{G^\vee}$ parametrizes canonical bases of irreducible $G$-modules. For the representation with highest weight $\lambda$, the canonical basis in $V_\lambda$ is parametrized by the points of $\hw^{-t}(\lambda) \subset \mathcal{C}_{BK}^{G^\vee}$, where $\hw^t$ is the tropical highest weight map. The preimage of this set under the duality map $\psi$ is exactly the set of points of $\Lambda$ which belongs to the integral symplectic leaf in the partial tropicalization of $K^*$ corresponding to the weight $\lambda$. The relations are depicted in Figure \ref{fig:relations}.

\begin{figure}[htb]
    \[
    \begin{tikzcd}[column sep=3em, row sep=2.5em]
        \framebox{\Longstack[c]{Integral\\ coadjoint orbits $\mathcal{O}(\lambda)$}} \arrow[r, leftrightarrow]  &\framebox{\Longstack[c]{Irreducible $G$-modules}} \arrow[r, leftrightarrow] \arrow[d, leftrightarrow] & \framebox{\Longstack[c]{Fibers of\\ $\hw^{t}\colon \mathcal{C}^{G^\vee}_{BK}\to \mathfrak{t}^*$}}  \\
         & \framebox{\Longstack[c]{Integral symplectic\\ leaves of $PT(K^*)$}} & 
    \end{tikzcd}
    \]
	\caption[Caption for the list of figures]{}
	\label{fig:relations}
\end{figure}

As a by-product of our construction, we show that for generic $\lambda \in \mathfrak{t}^*$ the volume of the symplectic leaf in $PT(K^*)$ coincides with the volume of the coadjoint orbit in $\mathfrak{k}^*$ passing through $\lambda$. 

The results of this article have already been used in applications. In \cite{AHLL}, Jeremy Lane and three of the authors use the partial tropicalization $PT(K^*)$ to study the $s\to -\infty$ limiting behavior a family of symplectic forms $\omega^s_\lambda$ on the regular coadjoint orbit $K\cdot \lambda\cong K/T$. In a forthcoming work \cite{AHLL2}, the same authors use $PT(K^*)$ to establish tight lower bounds on the Gromov width of regular coadjoint orbits of $K$, as well as all other multiplicity-free $K$-spaces with regular moment map image.

The paper is organized as follows. In Sections \ref{backgroundsection} and \ref{positivesection} we recall the relevant background material on algebraic groups and positivity theory. In Section \ref{Doubleseed} we define the notion of a double cluster variety and introduce the comparison map $\psi$. Our main application is in Section \ref{potentialsection}, where we consider double Bruhat cells as double cluster varieties. Here we also consider the image of the BK cone and its crystal structure under the comparison map. Finally, in Section \ref{poissonsection} we recall the partial tropicalization $PT(K^*)$ of a dual Poisson-Lie group, and consider the Poisson geometry of $PT(K^*)$ in light of our previous discussion. In particular, we derive the relationship between $PT(K^*)$ and the representation theory of $G$ described above. In Appendix \ref{appendix1} we describe explicitly the comparison map between the cones for ${\rm SO}_{2n+1}$ and its Langlands dual group ${\rm Sp}_{2n}$. In Appendix \ref{appendix2} we describe the Bohr-Sommerfeld lattice in the more general context of tropical Poisson varieties.

{\bf Acknowledgements.} We are grateful to B. Elek, V. V. Fock, A. Goncharov, J. Lane, J. H. Lu and M. Semenov-Tian-Shansky for their useful comments and discussions, and to D. R. Youmans for his helpful comments on an earlier draft. Research of A.A. and Y.L. was supported in part by the grant MODFLAT of the European Research Council (ERC), by the grants number 178794 and 178828 of the Swiss National Science Foundation (SNSF) and by the NCCR SwissMAP of the SNSF. B.H. was supported by the National Science Foundation Graduate Research Fellowship under Grant Number DGE-1650441. A.B. and B.H. express their gratitude for hospitality and support during their visits to Switzerland in 2017 and 2018.

\section{Background on Semisimple Algebraic Groups}
\label{backgroundsection}

Let $A=[a_{ij}]$ be a symmetrizable Cartan matrix, where $i,j\in\bm{I}=\{1, \dots, r\}$. That is, $a_{ii}=2$ and $a_{ij}\in \mathbb{Z}_{\leqslant 0}$ for $i\neq j$, and there exists a sequence of positive integers ${\bm d}=\{ d_1, \dots, d_r\}$ called a symmetrizer so that $a_{ij}d_j=a_{ji}d_i$. The matrix $AD$ is positive-definite, where $D=\diag(d_1, \dots, d_r)$, and $(AD)^T=AD$. Clearly, $A$ is symmetrizable in the usual sense via:
\[\frac{d}{d_i}a_{ij}=\frac{d}{d_j}a_{ji}\]
for any natural number $d$ divisible by all $d_i$.

Let $\mathfrak{g} = \mathfrak{g}(A)$ be the semisimple Lie algebra over $\mathbb{Q}$ corresponding to the Cartan matrix $A$. Recall that $\mathfrak{g}$ is generated by $\{E_i,F_i\}_{i=1}^r$ subject to the Serre relations \cite{Kac}. Denote by $\alpha_i^\vee=[E_i, F_i]$ the $i^{\text{~th}}$ simple coroot and by $\mathfrak{h}$ the span of all simple coroots. Let $\mathfrak{h}^*$ be the linear dual space and choose a basis of simple roots $\alpha_1,\dots,\alpha_r\in \mathfrak{h}^*$ such that
\begin{equation}
\label{rootcorootpairing}
    \langle \alpha_j, \alpha_i^\vee\rangle =a_{ij}.
\end{equation}
Using this definition and a chosen symmetrizer ${\bm d}$, we can define a symmetric bilinear form on $\mathfrak{h}$:
\[
    (\alpha_i^\vee, \alpha_j^\vee):=a_{ij} d_j.
\]
This form uniquely extends to a $\mathfrak{g}$-invariant symmetric bilinear form on $\mathfrak{g}$, and induces a symmetric bilinear form on $\mathfrak{h}^*$:
\[
    (\alpha_i, \alpha_j) = d_i^{-1} a_{ij},
\]
as well as an isomorphism $\psi\colon \mathfrak{h} \to \mathfrak{h}^*$ such that
\[
    \psi(\alpha_i^\vee) = d_i \alpha_i.
\]
The formulas above imply the following standard identities:
\[
    d_i=\frac{a_{ii}}{(\alpha_i, \alpha_i)} = \frac{2}{(\alpha_i, \alpha_i)},\quad a_{ij}=d_i (\alpha_i, \alpha_j) =2\frac{(\alpha_i, \alpha_j)}{(\alpha_i, \alpha_i)}, \quad \psi(\alpha_i^\vee)=\frac{2 \alpha_i}{(\alpha_i, \alpha_i)}.
\]
Fix a positive integer $d$ such that each $d_i$ divides $d$ (for instance, we can choose $d=\lcm\{d_1,\dots,d_i\}$). Note that ${\bm d}^\vee:=\{d_i^\vee:=d/d_i\}$ defines a symmetrizer for the transposed Cartan matrix $A^\vee=A^{T}=[a_{ji}]$. Indeed,
\[
    A^\vee D^\vee=dA^{T}D^{-1}=d(D^{-1} A D)D^{-1}=dD^{-1}A=(A^\vee D^\vee)^{T}.
\]
Define the dual Lie algebra $\mathfrak{g}^\vee=\mathfrak{g}(A^\vee)$ with generators $E_i^\vee, F_i^\vee$, and choose the standard identification $\mathfrak{h}^\vee = \mathfrak{h}^*$ via
\[
    [E_i^\vee, F_i^\vee] = \alpha_i.
\]
The symmetrizer ${\bm d}^\vee$ defines new symmetric bilinear forms $(\cdot, \cdot)^\vee$ on $\mathfrak{h} = (\mathfrak{h}^\vee)^*$ and $\mathfrak{h}^*=\mathfrak{h}^\vee$ as well as a map $\psi^\vee: \mathfrak{h}^* \to \mathfrak{h}$. It is easy to check that
\[
    (\cdot, \cdot)_\mathfrak{h}^\vee =d^{-1} (\cdot, \cdot)_\mathfrak{h},\quad (\cdot, \cdot)_{\mathfrak{h}^*}^\vee=d (\cdot, \cdot)_{\mathfrak{h}^*}, \quad \psi^\vee = d \psi^{-1}.
\]
The fundamental weights $\omega_i\in \mathfrak{h}^*$ associated to the given simple coroots are defined by
\begin{equation}\label{weight}
		\langle\omega_i,\alpha_j^\vee\rangle=\delta_{ij}.
\end{equation}
The lattice generated by $\{\omega_i\}$ is the weight lattice of $\mathfrak{g}$, which we denote by $P$. By \eqref{rootcorootpairing} and \eqref{weight}, one has
\begin{equation}\label{linearcombofweights}
    (\alpha_1,\dots,\alpha_r)=(\omega_1,\dots,\omega_r)A, \qquad \text{{\em i.e.}~} \alpha_i=\sum_{j=1}^r a_{j i} \omega_j.
\end{equation}
Let $Q$ be the root lattice and $P^\vee=\Hom(Q,\mathbb{Z})\subset\mathfrak{h}$ be the dual lattice of $Q$ with dual basis $\{\omega_i^\vee\}$. Thus
\[
    (\alpha_i^\vee,\omega_j^\vee)=\langle \alpha_i^\vee,\psi(\omega_j^\vee) \rangle = d_j\delta_{ij},\quad (\alpha_1^\vee,\dots,\alpha_r^\vee)=(\omega_1^\vee,\dots,\omega_r^\vee)A^T.
\]
Let $Q^\vee=\Hom(P,\mathbb{Z})\subset\mathfrak{h}$ be the dual lattice of $P$, which is just the coroot lattice. 

Now let us recall the notion of character and cocharacter lattice. Let $\mathbb{G}_{\bf{m}}$ be the multiplicative group defined over $\mathbb{Q}$. Let $G$ be a semisimple algebraic group defined over $\mathbb{Q}$ with Lie algebra $\mathfrak{g}$. Let $H$ be the maximal torus of $G$ and $X^*(H)=\Hom(H,\mathbb{G}_{\bf{m}})$ the character lattice of $H$. For any $\gamma\in X^*(H)$, denote the multiplicative character by $\gamma\colon h\mapsto h^{\gamma}$. Let $X_*(H)= \Hom(\mathbb{G}_{\bf{m}},H)$ be the cocharacter lattice of $H$. Define the subset 
\[
    X_+^*=\{\lambda\in \mathfrak{h}^*\mid \langle \lambda,\alpha_i^\vee\rangle\in \mathbb{Z}_{\geqslant 0} \text{~for all~} i\in \bm{I}\}\subset X^*(H),
\]
which is the set of dominant weights of $G$.

In summary, we have the following lattices:
\[
	Q\subset X^*(H)\subset P; \quad Q^\vee\subset X_*(H)\subset P^\vee.
\]
\begin{Ex}\label{Ex:SL2}
	Let $G=\SL_2$ and $H$ be the subgroup of diagonal matrices. The roots of $\mathfrak{sl}_2$ give the following characters of $H$
	\[
		\alpha\colon
		\begin{bmatrix}
			a & 0\\
			0 & a^{-1}
		\end{bmatrix}\mapsto a^2,\
		-\alpha\colon
		\begin{bmatrix}
			a & 0\\
			0 & a^{-1}
		\end{bmatrix}\mapsto a^{-2}.
	\]
	Therefore $X^*(H)=\frac{1}{2}\mathbb{Z}\alpha=\mathbb{Z}\omega$, where $\omega=\frac{1}{2}\alpha$ is the only fundamental weight. The cocharacter lattice is $X_*(H)=\mathbb{Z}\alpha^\vee$, where $\alpha^\vee$ is the simple coroot of the root $\alpha$. The dual of the weight lattice is $\mathbb{Z}\alpha^\vee$. Thus we know:
	\[
		Q(\mathfrak{sl}_2)\subset X^*(H)= P(\mathfrak{sl}_2); \quad Q^\vee(\mathfrak{sl}_2)= X_*(H)\subset P^\vee(\mathfrak{sl}_2).
	\]
	We will come back to this example later.
\end{Ex}
The quadruple of $(X^*,Q;X_*,Q^\vee)$ is called \emph{root datum} of $G$, and the dual root datum $(X_*,Q^\vee;X^*,Q)$ is defined by switching characters with cocharacters, and roots with coroots. The Langlands dual group $G^\vee$ is the connected semisimple group whose root datum is dual to that of $G$. Let $H^\vee$ be the maximal torus of $G^\vee$. If $G$ is semisimple, the map $\psi$ restricts to cocharacter lattice:
\begin{Pro}\label{symmetrizer}
	One can choose the symmetrizer $\bm{d}$ such that the isomorphism $\psi$ restricts to a lattice (abelian group) homomorphism 
	\[
		\psi\colon X_*(H) \to X^*(H)=X_*(H^\vee),
	\]
	which induces a group homomorphism $\varPsi^H\colon H \to H^\vee$.
\end{Pro}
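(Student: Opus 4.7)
\emph{Proof proposal.} The plan is to first identify how $\psi$ acts on the dual weight basis $\{\omega_j^\vee\}$ of $P^\vee$, then exploit the scalar ambiguity in $\bm d$ to force $\psi(P^\vee)\subset X^*(H)$; the restriction to $X_*(H)\subset P^\vee$ will then be automatic.

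First, I would establish the formula
\[
\psi(\omega_j^\vee)=d_j\,\omega_j,\qquad j\in\bm{I}.
\]
The cleanest derivation is via the bilinear form: since $\langle\psi(u),v\rangle=(u,v)_{\mathfrak{h}}$ for all $u,v\in\mathfrak{h}$, the identity $(\alpha_i^\vee,\omega_j^\vee)=d_j\delta_{ij}$ recorded in the excerpt, together with the defining property $\langle\omega_l,\alpha_i^\vee\rangle=\delta_{li}$, pins down the coefficients in the expansion $\psi(\omega_j^\vee)=\sum_l c_l\,\omega_l$ by pairing with the $\alpha_i^\vee$.

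Next, I would observe that the defining relation $a_{ij}d_j=a_{ji}d_i$ is invariant under simultaneous rescaling of the $d_i$: if $\bm d$ is a symmetrizer, so is $N\bm d$ for every $N\in\mathbb{Z}_{>0}$, and this replacement multiplies $\psi$ by $N$. Since $X^*(H)\subset P$ has finite index, the quotient $P/X^*(H)$ is a finite abelian group; let $N$ be any positive integer annihilating it (the order of the fundamental group $P/Q$ works uniformly, because $P/X^*(H)$ is a quotient of $P/Q$). Then $N\omega_j\in X^*(H)$ for every $j$. After replacing $\bm d$ by $N\bm d$, the first step yields $\psi(\omega_j^\vee)=Nd_j\,\omega_j=d_j\,(N\omega_j)\in X^*(H)$. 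Every $v\in X_*(H)\subset P^\vee=\bigoplus_j\mathbb{Z}\,\omega_j^\vee$ has an integer expansion in the $\omega_j^\vee$, so by $\mathbb{Z}$-linearity $\psi(v)\in\bigoplus_j\mathbb{Z}\,\psi(\omega_j^\vee)\subset X^*(H)=X_*(H^\vee)$, the last equality being the definition of the dual root datum. Finally, by the standard equivalence between algebraic tori and their cocharacter lattices, this $\mathbb{Z}$-linear map $\psi\colon X_*(H)\to X_*(H^\vee)$ extends uniquely to the desired morphism of algebraic groups $\varPsi^H\colon H\to H^\vee$.

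The only step requiring genuine computation is the identity $\psi(\omega_j^\vee)=d_j\,\omega_j$; everything afterwards reduces to choosing $N$ and invoking the cocharacter lattice--torus correspondence. The main place where care is needed is in keeping the three pairings $\langle\cdot,\cdot\rangle$, $(\cdot,\cdot)_{\mathfrak{h}}$, and $(\cdot,\cdot)_{\mathfrak{h}^*}$ and their various transposition conventions straight.
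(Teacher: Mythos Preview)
Your argument is correct and follows the same overall strategy as the paper's proof: both reduce the claim to showing that $\psi(P^\vee)$ lands in the target lattice after a suitable rescaling of $\bm d$. The only real difference is cosmetic. You compute $\psi(\omega_j^\vee)=d_j\omega_j$ in the fundamental weight basis and then scale $\bm d$ by the exponent of the finite group $P/X^*(H)$, so that $\psi(P^\vee)\subset X^*(H)$. The paper instead expresses $\psi(\omega_j^\vee)$ in the root basis, obtaining $(\psi(\omega_1^\vee),\dots,\psi(\omega_r^\vee))=(\alpha_1,\dots,\alpha_r)\,DA^{-T}$, and then scales $\bm d$ so that $DA^{-T}$ is an integer matrix, giving the slightly stronger containment $\psi(P^\vee)\subset Q$. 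Your route is a touch more direct; the paper's has the advantage that the image lands in $Q$ regardless of which group $G$ (with the given Cartan matrix) one is considering, since $Q\subset X^*(H)$ for every such $G$.
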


\begin{proof}
    Since $X_*(H)\subset P^\vee$ and $Q\subset X^*(H)$, it suffices to show that $\bm{d}$ can be chosen so that $\psi(P^\vee)\subset Q$. Considering \eqref{linearcombofweights} for the Lie algebra $\mathfrak{g}^\vee$ gives
    \begin{equation} \label{lincombfordual}
    (\omega^\vee_1,\dots,\omega_r^\vee) = (\alpha_1^\vee,\dots,\alpha_r^\vee) A^{-T},
    \end{equation}
    where we write $A^{-T}=(A^T)^{-1}$. Applying $\psi:\mathfrak{h}\to\mathfrak{h}^*$ to both sides of \eqref{lincombfordual}, one finds
    \begin{align*}
    (\psi(\omega^\vee_1),\dots,\psi(\omega^\vee_r)) = (\psi(\alpha_1^\vee),\dots,\psi(\alpha_r^\vee)) A^{-T} 
     = (\alpha_1,\dots,\alpha_r) DA^{-T}.
    \end{align*}
    It is enough then to choose $\bm{d}$ so that $DA^{-T}$ is an integer matrix; since $A$ is invertible over $\mathbb{Q}$, this is always possible.
\end{proof}

Note that if $G$ is simply connected, any symmetrizer $\bm{d}$ satisfies Proposition \ref{symmetrizer}. In the remainder of the paper, we fix a symmetrizer ${\bm d}$ as in Proposition \ref{symmetrizer}.

\begin{Ex}
	Here we list some examples of Langlands dual groups:
	\[
  		\SL(n)^{\vee}=\PSL(n), \ \SO(2n+1)^{\vee}=\Sp(2n), \ \Spin(2n)^{\vee}=\SO(2n)/\{\pm 1\},\ \SO(2n)^{\vee}=\SO(2n).
	\]	
\end{Ex}

\section{Positive Varieties}
\label{positivesection}

In this section, we briefly recall basic definitions of positivity theory and fix the notation.

Consider a split algebraic torus $S\cong \mathbb{G}_{\bf{m}}^n$. Denote the character lattice of $S$ by $S_t=\Hom(S, \mathbb{G}_{\bf{m}})$ and the cocharacter lattice by $S^t = \Hom(\mathbb{G}_{\bf{m}}, S)$. The lattices $S_t$ and $S^t$ are naturally in duality. The coordinate algebra $\mathbb{Q}[S]$ is the group algebra (over $\mathbb{Q}$) of the lattice $S_t$, that is, each $f \in \mathbb{Q}[S]$ can be written as
\begin{equation}\label{chicchi}
	f= \sum_{\chi \in S_t} c_\chi \chi,
\end{equation}
where only a finite number of coefficients $c_\chi$ are non-zero.

Let $\phi\colon S \to S'$ be a positive rational map (defined below). Following \cite{BKII}, we associate to $\phi$ a \emph{tropicalized map} $\phi^t\colon S^t\to (S')^t$ in the following way:

{\em Case 1.} If $\phi$ is a positive regular function on $S$, \emph{i.e.} $\phi$ has form as \eqref{chicchi} with all $c_\chi \geqslant 0$, then:
\[
	\phi^t\colon S^t \to \mathbb{G}_{\bf{m}}^t = \mathbb{Z}\ :\ \xi\mapsto \min_{\chi; \, c_\chi >0} \langle \chi, \xi \rangle,
\]
where $\langle \cdot, \cdot \rangle\colon S_t  \times S^t \to \mathbb{Z}$ is the canonical pairing.

{\em Case 2.} If $\phi$ is a positive rational function on $S$, \emph{i.e.} $\phi=f/g$ with $f,g$ positive regular functions:
\[
	\phi^t:=f^t - g^t.
\]

{\em Case 3.} Let $\phi\colon S \to S'$ be a \emph{positive rational map}, \emph{i.e.} the component functions of $\phi$ are positive rational functions on $S$. Define $\phi^t\colon S^t \to (S')^t$ as the unique map such that for every character $\chi \in S'_t$ and for every cocharacter $\xi \in S^t$ we have
\[
	\langle \chi, \phi^t(\xi) \rangle = (\chi \circ \phi)^t(\xi).
\]
A more concrete description is as follows. Let $\phi_1,\dots,\phi_m$ be the components of $\phi$ given by the splitting $S'\cong \mathbb{G}_{\bf{m}}^m$. Then, in the induced coordinates on $(S')^t,$ we have
\[
	\phi^t=(\phi_1^t,\dots,\phi_m^t).
\]
\begin{Ex}
	The positive rational map
	\begin{align*}
		\phi:\mathbb{G}_{\bf{m}}^3 \to \mathbb{G}_{\bf{m}}^3 \ :\ (x_1,x_2,x_3)& \mapsto \left(\frac{x_2x_3}{x_1+x_3},x_1+x_3,\frac{x_1x_2}{x_1+x_3} \right)
	\end{align*}
	has tropicalization
	\begin{align*}
		\phi^t\colon (\mathbb{G}_{\bf{m}}^3)^t\cong \mathbb{Z}^3 & \to (\mathbb{G}_{\bf{m}}^3)^t\cong \mathbb{Z}^3; \\
		(\xi_1,\xi_2,\xi_3) &\mapsto \left(\xi_2+\xi_3-\min\{\xi_1,\xi_3\},\min\{\xi_1,\xi_3\}, \xi_1+\xi_2-\min\{\xi_1,\xi_3\}\right).
	\end{align*}
	Note that $\phi^t$ is linear on the chambers $\xi_1<\xi_3$ and $\xi_1>\xi_3$.
\end{Ex}

\begin{Def}\cite[Definition 2.9, 2.16, 2.19]{ABHL}
	Let $X$ be an irreducible scheme over $\mathbb{Q}$. A \emph{toric chart} is an open embedding  $\theta\colon S \to X$ from a split algebraic torus $S$ to $X$. Two charts $\theta_1\colon S_1\to X$ and $\theta_2\colon S_2\to X$ are called \emph{positively equivalent} if $\theta_1^{-1}\circ\theta_2\colon S_2 \to S_1$ and $\theta_2^{-1}\circ\theta_1\colon S_1\to S_2$ are positive rational maps. A \emph{positive variety} is a pair $(X,\Theta_X)$, where $\Theta_X$ is a positive equivalence class of toric charts. If $\theta \in \Theta_X$, we sometimes write $\Theta_X=[\theta]$. Denote $(X,\theta)$ the \emph{framed positive variety} with a fixed toric chart $\theta$.
\end{Def}

Since $\theta$ is an open map, it induces an inclusion of coordinate algebras. We identify the coordinate algebra of $X$ with a subalgebra of $\mathbb{Q}[S]$. 

\begin{Ex} 
  Let $N\subset \SL_3$ be the group of unipotent upper-triangular matrices. Define $\theta\colon S=\mathbb{G}_{\bf{m}}^3\to N$ by
  \begin{align*}
    \theta(x_1,x_2,x_3) & = 
      \begin{bmatrix}
        1 & x_1 & 0 \\
        0 & 1 & 0 \\
        0 & 0 & 1
      \end{bmatrix}
      \begin{bmatrix}
        1 & 0 & 0 \\
        0 & 1 & x_2  \\
        0 & 0 & 1
      \end{bmatrix}
      \begin{bmatrix}
        1 & x_3 & 0 \\
        0 & 1 & 0 \\
        0 & 0 & 1
      \end{bmatrix}=
      \begin{bmatrix}
        1 & x_1+x_3 & x_1x_2 \\
        0 & 1 & x_2 \\
        0 & 0 & 1
      \end{bmatrix}.
  \end{align*}
  Then $\theta$ is a toric chart on $N$.
\end{Ex}

\begin{Def}
	A \emph{positive map} of positive varieties $\phi\colon(X,\Theta_X)\to (Y,\Theta_Y)$ is a rational map $\phi\colon X\to Y$ so that for some (equivalently any) $\theta_X\in \Theta_X$ and $\theta_Y\in \Theta_Y$, the rational map $\theta_Y^{-1} \circ\phi\circ \theta_X\colon S\to S'$ is positive.
\end{Def}
Tropicalization extends to positive varieties: if $(X,\theta\colon S\to X)$ is a framed positive variety, set
\[
	(X,\theta)^t:=\Hom(\mathbb{G}_{\bf{m}},S)=S^t.
\]
A positive rational map $\phi\colon (X,\theta_X)\to(Y,\theta_Y)$ has a tropicalization 
\[
	\phi^t:=(\theta_Y^{-1}\circ \phi\circ \theta_X)^t\colon (X,\theta_X)^t\to (Y,\theta_Y)^t,
\]
and tropicalization respects composition of positive rational maps. If $\theta,\theta'\colon \mathbb{G}_{\bf{m}}^k\to X$ are positively equivalent charts, the transition map $(\theta^{-1}\circ \theta')^t\colon\mathbb{Z}^k\to \mathbb{Z}^k$ is a piecewise $\mathbb{Z}$-linear bijection.

\begin{Def}
  Let $(X,\theta)$ be a framed positive variety. We distinguish a positive rational function $\Phi$, called a \emph{ potential} on $(X,\theta)$. The triple $(X, \theta, \Phi)$ is called a \emph{framed positive variety with potential}. We define similarly a \emph{positive variety with potential} as a triple $(X,[\theta],\Phi)$. For a framed positive variety with potential $(X,\theta, \Phi)$, we define the cone
  \[
    (X,\theta,\Phi)^t :=\left\{\xi\in (X,\theta)^t \mid \Phi^t(\xi)\geqslant 0\right\}\subset (X,\theta)^t,
  \]
  which we call the \emph{potential cone}. 
\end{Def}

\begin{Rmk}
  By imposing $0^t=-\infty$, we can consider $0$ as a potential. We then have $(X,\theta,0)^t=(X,\theta)^t$.
\end{Rmk}

\begin{Rmk}
  When $\Phi$ restricts to a regular function $\theta^*\Phi$ on the toric chart $\theta$, the cone $(X,\theta,\Phi)^t$ is convex. This type of situation occurs in all the examples we consider in this article.
\end{Rmk}
 
\section{Double Cluster Varieties}\label{Doubleseed}
In this section, we first recall some basic definitions from cluster theory. Then to each seed, we associate two cluster varieties, which are related by a collection of locally defined maps $\varPsi_\sigma$ indexed by seeds $\sigma$. These maps agree after tropicalization, giving a ``coordinate-independent'' comparison of the tropical varieties. The material of this chapter essentially follows \cite{FG2}.

\begin{Def}
	A \emph{seed} $\sigma=(I,J,M)$ consists of a finite set $I$, a subset $J\subset I$ and an integer matrix $M=\left[ M_{ij} \right]_{i,j\in I}$ which is skew-symmetrizable, \emph{i.e.} there exists a sequence of positive integers ${\bm d}=\{d_i\}_{i\in I}$ called a skew-symmetrizer such that $M_{ij}d_j=-M_{ji}d_i$. The \emph{principal part} of $M$ is given by $M_0=\left[ M_{ij} \right]_{i,j\in J}$.
\end{Def}
As with the symmetrizable matrix $A$ in section \ref{backgroundsection}, the existence of a skew-symmetrizer for $M$ easily implies that $M$ is skew-symmetrizable in the usual sense. Note that the submatrix $\widetilde{B}=[M_{ij}]_{i\in I,~j\in J}$ is called an {\em exchange matrix} and usually mutations of seeds are defined in terms of $\widetilde{B}$, however the seed matrix $M$ is more convenient for our purposes.

We associate a split algebraic torus to a given seed $\sigma$: 
\[
	\mathcal{A}_{\sigma}:=\mathbb{G}_{\bf{m}}^{|I|},
\]
and write $\{a_i\}_{i\in I}$ for the natural coordinates on $\mathcal{A}_\sigma$.

Recall that the matrix mutation of any matrix $M$ in direction $k$  is defined as:
\[
	\mu_k(M)_{ij}=\left\{
	\begin{aligned}
		\ &-M_{ij}, & &\text{~if~}\ k\in\{i,j\};\\
		\ &M_{ij}+\frac{1}{2}\Big(|M_{ik}|M_{kj}+M_{ik}|M_{kj}|\Big), & &\text{~otherwise~}.
	\end{aligned}\right.
\]
If $MD$ is skew-symmetric, one can easily show that $\mu_k(M)D$ is skew-symmetric as well. Define a \emph{mutation of a seed} $\sigma$ \emph{in direction} $k\in J$ as the seed $\sigma_k=(I_k,J_k,\mu_k(M))$, where $I_k=I, J_k=J$, together with a birational map of tori $\mu_k:\mathcal{A}_\sigma\to \mathcal{A}_{\sigma_k}$ given in terms of their coordinate algebras by:
\[
	\mu_k^*(a_i)=\left\{
	\begin{aligned}
		\ & a_i, & &\text{~if~}\ i\neq k;\\
		\ & a_k^{-1}\left(\prod_{M_{jk}>0}a_j^{M_{jk}}+\prod_{M_{jk}<0}a_j^{-M_{jk}}\right), & &\text{~if~}\ i=k.
	\end{aligned}\right.
\]
Two seeds will be called \emph{mutation equivalent} if they are related by a sequence of mutations. The equivalence class of a seed $\sigma$ is denoted by $|\sigma|$. 
\begin{Def}
	The cluster variety $\mathcal{A}\equiv\mathcal{A}_{|\sigma|}$ is the scheme obtained by gluing the $\mathcal{A}_{\sigma}$ for all $\sigma\in |\sigma|$ using the birational mutation maps. 
\end{Def}

\begin{Ex}\label{Ex:Stashef pentagon}
	(Stasheff pentagon)
	\begin{figure}[htb]
	\[
		\begin{tikzpicture}
			\newdimen\Rd \Rd=6cm
			\foreach \a in {5}{ 
				\node [dashed,regular polygon, regular polygon sides=\a, minimum size=\Rd, draw] at (\a*4,0) (A) {};
					\foreach \i in {1,...,\a}{
						\node [fill=white,regular polygon, regular polygon sides=\a, minimum size=\Rd/2] at (A.corner \i) {};
						\node (B) [fill=white, regular polygon, regular polygon sides=\a, minimum size=\Rd/3, draw] at (A.corner \i) {};
          				\foreach \i in {1,...,\a}{
            				\node [label=90+72*(\i-1):\i, inner sep=1pt] at (B.corner \i) {}; 
          				}
						\pgfmathsetmacro\x{int(Mod(2*(\i - 1),5) + 1)}
						\pgfmathsetmacro\p{int(Mod( (\x+2) , 5) + 1)}
						\pgfmathsetmacro\q{int(Mod( (\x+1) , 5) + 1)}
						\draw (B.corner \x) -- (B.corner \p);
						\draw (B.corner \x) -- (B.corner \q);
					}
			}
		\end{tikzpicture}
	\]
	\caption{Stasheff pentagon}
	\label{fig:Stashef pentagon}
	\end{figure}
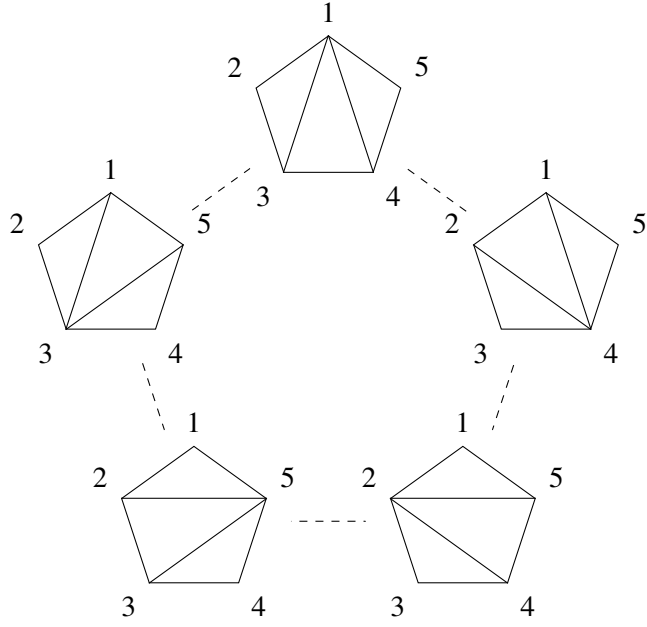
	To the very top pentagon in Figure \ref{fig:Stashef pentagon}, we associate a seed $(I,J,M)$, where $I=\{1,\dots,7\}$, $J=\{1,2\}$ and $M$ is given by
	\[
    M=
    \begin{bmatrix}
      M_{11}     & M_{12} \\[0.1cm]
      M_{12}^T & 0
    \end{bmatrix}, \text{~where~}\
		M_{11}=
		\begin{bmatrix}
			0 & -1\\
			1 &  0
		\end{bmatrix},
		M_{12}=
		\begin{bmatrix}
			1 & -1 &  1 & 0 &  0\\
			0 &  0 & -1 & 1 & -1
		\end{bmatrix}.
	\]
	To each edge $(m,n)$ in the top pentagon we associate a variable $b_{mn}$, which will be a coordinate on the seed torus $\mathcal{A}_{(I,J,M)}$. By ordering the variables $b_{mn}$ in the following way, we index them by $I$:
	\[
	    b_{13},b_{14},b_{12},b_{23},b_{34},b_{45},b_{15} .
	\]
	By the definition of mutation in direction $2$, we get
	\[
	    \mu_2(b_{14})=\frac{b_{13}b_{45}+b_{15}b_{34}}{b_{14}}.
	\]
	This mutation can be presented by the {\em Whitehead move} from edge $(1,4)$ to edge $(3,5)$ and the {\em Pl\"ucker relation}: Let $b_{35}:=\mu_{2}(b_{14})$ be the variable corresponding to edge $(3,5)$, then 
	\[
		b_{14}b_{35}=b_{13}b_{45}+b_{15}b_{34}.
	\]
	In fact, each dashed line in Figure \ref{fig:Stashef pentagon} is a Whitehead move and gives a cluster mutation. The algebra generated by $\{b_{ij}\}$ with all Pl\"ucker relations is the homogeneous coordinate ring of the Grassmannian $\mathcal{G}_2(5)$ of $2$-dimensional planes in the $5$-dimensional space.
	Note that the principal part of $M$ is $M_{11}$. More details can be found in  \cite{GSV}. 
\end{Ex}

\begin{Def}
Following \cite{FG}, we define the {\em (Langlands) dual seed} of $\sigma$ as $\sigma^\vee:=(I,J,-M^T)$. For the skew-symmetrizer ${\bm d}$ of $M$, fix an integer $d$ such that each $d_i$ divides $d$ for all $i\in I$. Then ${\bm d}^\vee:=\{d_i^\vee:=d/d_i\}$ is a skew-symmetrizer of $-M^T$. For a seed $\sigma$, denote the torus associated to the dual seed $\sigma^\vee$ by $\mathcal{A}^\vee_{\sigma}\equiv\mathcal{A}_{\sigma^\vee}$. 
\end{Def}

It is not hard to check that
\[
	\mu_k(-M^T)=-\mu_k(M)^T.
\]
In other words, we have $\mu_k(\sigma)^\vee=\mu_k(\sigma^\vee)$. Therefore, the tori $\mathcal{A}^\vee_\sigma$ assemble to a \emph{dual cluster variety} $\mathcal{A}^\vee$. That is, $\mathcal{A}^\vee= \mathcal{A}_{|\sigma^\vee|}= \mathcal{A}_{|(\sigma')^\vee|}$ for any $\sigma,\sigma'\in |\sigma|$. 

\begin{Def}
  The quadruple $(\mathcal{A},\mathcal{A}^\vee; {\bm d},d)$ is called a {\em double cluster variety}. We write $(\mathcal{A},\mathcal{A}^\vee)$ for short if  is the choice of ${\bm d}$ and $d$ is clear from context.
\end{Def}

%

Abusing notation, each seed $\sigma$ gives a toric chart $\sigma\colon \mathcal{A}_{\sigma}\hookrightarrow \mathcal{A}$, which will be called a \emph{cluster chart}. 
Mutation equivalent seeds $\sigma$ and $\sigma'$ give positively equivalent charts for $\mathcal{A}$. Denote $[\sigma]$ the class of positively equivalent charts given by the equivalence class $|\sigma|$ of the seed $\sigma$. By construction (and the same abuse of notation), the pair $(\mathcal{A},[\sigma])$ is a positive variety. Similarly, the maps $\sigma^\vee\colon\mathcal{A}_\sigma^\vee\hookrightarrow \mathcal{A}^\vee$ are toric charts on the positive variety $(\mathcal{A}^\vee,[\sigma^\vee])$.
Denote by $\mathbb{Q}[\mathcal{A}]$ the algebra of regular functions on $\mathcal{A}$. Then $\mathbb{Q}[\mathcal{A}]$ coincides with the \emph{upper cluster algebra} generated by the seed $\sigma$; see \cite{BFZ}. The algebra homomorphism $\sigma^*\colon \mathbb{Q}[{\mathcal A}]\to \mathbb{Q}[{\mathcal A}_\sigma]$ is an injection, for any cluster chart $\sigma$. 

Given a seed $\sigma$, there is a natural morphism of tori associated to the skew-symmetrizer $\bm{d}$:
\begin{equation}\label{com1}
	\varPsi_{\sigma}\colon \mathcal{A}_{\sigma}\to \mathcal{A}_\sigma^\vee \ : \ (x_{i_1},\dots,x_{i_{|I|}})\mapsto (x_{i_1}^{d_{i_1}},\dots,x_{i_{|I|}}^{d_{i_{|I|}}}).
\end{equation}
On the coordinate algebra, we have the algebra homomorphism
\[
	\varPsi_{\sigma}^*\colon \mathbb{Q}[\mathcal{A}_\sigma^\vee]  \to \mathbb{Q}[\mathcal{A}_\sigma] \ :\ a_i^\vee  \mapsto a_i^{d_{i}},\quad i\in I.
\] 
Since $\mu_k(M)$ is skew-symmetrized by $\bm{d}$ as well, for any $\sigma'\in |\sigma|$, there is another map of tori:
\[
	\varPsi_{\sigma'}\colon \mathcal{A}_{\sigma'}\to \mathcal{A}_{\sigma'}^\vee  \ : \ 
	 \ (x'_{i_1},\dots,x'_{i_{|I|}})\mapsto ({x'}_{i_1}^{d_{i_1}},\dots,{x'}_{i_{|I|}}^{d_{i_{|I|}}}).
\]
So for each seed $\sigma'\in |\sigma|$, there is a rational comparison map $\varPsi\colon \mathcal{A}_{|\sigma|} \to \mathcal{A}_{|\sigma|}$.

Note that $(\mathcal{A}^\vee,\mathcal{A}; {\bm d}^\vee,d)$ is also double cluster variety. Therefore, similar to \eqref{com1}, we have map $\varPsi_{\sigma^\vee}\colon \mathcal{A}_{\sigma}^\vee\to \mathcal{A}_\sigma$. Direct computation shows $\varPsi_{\sigma^\vee}\circ\varPsi_{\sigma}\colon \mathcal{A}_\sigma\to \mathcal{A}_\sigma$ is the map which simply raises each coordinate $a_i$ to the same power $d$. The cluster variety $\mathcal{A}$ and its dual $\mathcal{A}^\vee$ therefore play symmetric roles in the double cluster variety $(\mathcal{A},\mathcal{A}^\vee; {\bm d},d)$.

In what follows, write $\psi_\sigma:=\varPsi^t_\sigma$, where tropicalization is taken with respect to toric charts that are positively equivalent to $\sigma$ and $\sigma^\vee$. We will discuss the comparison map $\psi_{\sigma}=\varPsi_{\sigma}^t$ in more detail. Let us look at an example first.
\begin{Ex}
	We follow the notation in Example \ref{Ex:Stashef pentagon}. Since the matrix $M$ is skew-symmetric, the  dual of $\mathcal{G}_2(5)$ is itself by identifying $b_{ij}^\vee$ and $b_{ij}$. The skew-symmetrizer $\bm{d}$ can be chosen as $\diag(d,\dots,d)$ for $d\in \mathbb{Z}_+$. Then on each seed $\sigma$, we have:
    \[
		\varPsi_{\sigma}\colon \mathcal{A}_{\sigma}\to \mathcal{A}^\vee_{\sigma}\cong\mathcal{A}_{\sigma}\ \text{~s.t.~}\ \varPsi_{\sigma}^* (b_{ij}^\vee)=b_{ij}^d.
	\]
	So on the seed $\sigma$ containing edges $(1,3)$ and $(1,4)$, one computes:
	\begin{equation}\label{Ex:eq1}
		\varPsi_{\sigma}^*(b_{35}^\vee)=\frac{b_{13}^d b_{45}^d+b_{15}^d b_{34}^d}{b_{14}^d}
	\end{equation}
	On the seed $\sigma'$ containing edges $(3,1)$ and $(3,5)$, one has:
	\begin{equation}\label{Ex:eq2}
		\varPsi_{\sigma'}^*(b_{35}^\vee)=b_{35}^d=\left(\frac{b_{13}b_{45}+b_{15}b_{34}}{b_{14}}\right)^d.
	\end{equation}
	Note that right hand sides of \eqref{Ex:eq1} and \eqref{Ex:eq2} are equal after tropicalization:
	\[
	    \left( \frac{b_{13}^d b_{45}^d+b_{15}^d b_{34}^d}{b_{14}^d} \right)^t=\min \{ d\xi_{13}+d\xi_{45}, d\xi_{15}+d\xi_{34}\}-d\xi_{14}=\left( \frac{(b_{13}b_{45}+b_{15}b_{34})^d}{b_{14}^d} \right)^t,
	\]
	where $\xi_{mn}=b_{mn}^t$ is the tropicalization of $b_{mn}$.
\end{Ex}

Next we will generalize what happened in the previous example. Recall that if $\theta, \theta'\colon \mathbb{G}_{\bf{m}}^n \to X$ are positively equivalent charts on $X$, then $\id^t \colon (X,\theta)^t\to (X,\theta')^t$ is defined as $(\theta'\circ \theta^{-1})^t\colon  \mathbb{G}_{\bf{m}}^n\to  \mathbb{G}_{\bf{m}}^n$.

\begin{Pro}\label{Pro:compareCluster}
The tropical maps $\psi_\sigma$ agree for all $\sigma$. More precisely, let $\sigma$ be a seed, and $\mu$ be a sequence of mutations of $\sigma$. Then the following diagram commutes.
\[
\begin{tikzcd}
			(\mathcal{A},\sigma)^t \arrow[rr, "\id^t"] \arrow[dd, "\psi_\sigma"] && (\mathcal{A},\mu(\sigma))^t \arrow[dd, "\psi_{\mu(\sigma)}"] \\
			\\
			(\mathcal{A}^\vee,\sigma^\vee)^t \arrow[rr, "(\id^\vee)^t"] && (\mathcal{A}^\vee,\mu(\sigma^\vee))^t
		\end{tikzcd}
\]
Here we abbreviate $\id=\id_\mathcal{A}$ and $\id^\vee=\id_{\mathcal{A}^\vee}$.
\end{Pro}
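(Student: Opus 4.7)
The plan is to reduce the statement to the case of a single mutation and then verify the diagram by direct computation using the skew-symmetrizer identity. By the functoriality of tropicalization with respect to composition of positive rational maps (recalled in Section~\ref{positivesection}), and the fact that a sequence of commuting squares composes to a commuting square, it suffices to prove the assertion when $\mu = \mu_k$ for a single $k \in J$. In that case $\id_{\mathcal{A}}^t$ is nothing other than the tropicalization $(\mu_k)^t$ of the cluster mutation, and likewise $(\id^\vee)^t = (\mu_k^\vee)^t$ on the dual side.

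Next, I would make each edge of the diagram explicit in seed coordinates. The map $\psi_\sigma$ is, by construction, the linear map $(\xi_i) \mapsto (d_i \xi_i)$ on $(\mathcal{A},\sigma)^t \cong \mathbb{Z}^I$, and the same formula describes $\psi_{\mu_k(\sigma)}$ because $\bm{d}$ remains a skew-symmetrizer for $\mu_k(M)$ (as noted just after the definition of matrix mutation). The horizontal map $(\mu_k)^t$ is the identity on coordinates $i \neq k$ and on the $k$-th coordinate reads
\[
(\mu_k)^t(\xi)_k = -\xi_k + \min\Bigl(\sum_{j:\, M_{jk}>0} M_{jk}\,\xi_j,\ \sum_{j:\, M_{jk}<0} (-M_{jk})\,\xi_j\Bigr).
\]
Since the seed matrix of $\sigma^\vee$ is $-M^T$, the analogous formula for $(\mu_k^\vee)^t$ has the same shape with $M_{jk}$ replaced by $-M_{kj}$; in particular, the indexing sets of the positive and negative branches are determined by the sign of $-M_{kj}$ rather than $M_{jk}$.

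The verification then comes down to the skew-symmetrizer identity $M_{jk}\,d_k = -M_{kj}\,d_j$, which gives $-M_{kj} = (d_k/d_j)\,M_{jk}$ and, in particular, $M_{jk} > 0 \iff -M_{kj} > 0$. Substituting $\eta_j = d_j\xi_j$ into $(\mu_k^\vee)^t$ converts every coefficient $-M_{kj}$ into $d_k M_{jk}$ and identifies the two branches of the minimum with those appearing in $(\mu_k)^t$. A one-line computation then shows that both $\psi_{\mu_k(\sigma)} \circ (\mu_k)^t$ and $(\mu_k^\vee)^t \circ \psi_\sigma$ send $\xi$ to the tuple whose $i$-th component is $d_i\xi_i$ for $i \neq k$ and whose $k$-th component equals
\[
-d_k\xi_k + \min\Bigl(\sum_{j:\, M_{jk}>0} d_k M_{jk}\,\xi_j,\ \sum_{j:\, M_{jk}<0} d_k(-M_{jk})\,\xi_j\Bigr).
\]

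The only point that requires any care is the bookkeeping of signs, since the positive and negative branches on $\mathcal{A}^\vee$ are indexed by the sign of $-M_{kj}$ rather than $M_{jk}$. The skew-symmetrizer identity is precisely what guarantees that these index sets coincide and that their contributions match after rescaling by $d_k$. Once this is unwound, the diagram closes without additional input, and the general case follows by concatenating elementary squares as noted above.
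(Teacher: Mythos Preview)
Your proposal is correct and follows essentially the same approach as the paper's own proof: reduce to a single mutation $\mu_k$, write out both compositions explicitly, and use the skew-symmetrizer identity $M_{jk}d_k=-M_{kj}d_j$ to match the two branches of the minimum. The only cosmetic difference is that the paper first computes the pullbacks $\varPsi_{\sigma_k}^*(a_{k'}^\vee)$ and $\varPsi_\sigma^*(\mu_k^*(a_k^\vee))$ as rational functions and then tropicalizes, whereas you tropicalize each edge of the square first and compare directly in $\mathbb{Z}^I$; the underlying computation is identical.
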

\begin{proof}
	In fact, we only need to show the proposition for $\sigma $ and for $\mu=\mu_k$ a single mutation. Let $\sigma_k:=\mu_k(\sigma)$. Let $\{a_i\mid i\in I\}$ be the coordinates on $\mathcal{A}_{\sigma}$, and $\{a_{i'}\mid i'\in I'\}$  be the coordinates on $\mathcal{A}_{\sigma_k}$. And let $\{a_i^\vee\mid i\in I\}$ be the coordinates on $\mathcal{A}_{\sigma}^\vee$, and $\{a_{i'}^\vee\mid i'\in I'\}$ be the coordinates on $\mathcal{A}_{\sigma_k}^\vee$. On one hand, by definition:
	\[
		\varPsi_{\sigma_k}^*(a_{k'}^\vee)=a_{k'}^{d_k}=\mu_k^*(a_k)^{d_k}=a_k^{-d_k}\left(\prod_{M_{ik}>0}a_i^{M_{ik}}+\prod_{M_{ik}<0}a_i^{-M_{ik}}\right)^{d_k}.
	\]
	On the other hand, using the formula for mutation, we get:
	\begin{align*}
		\varPsi_{\sigma}^*(a_{k'}^\vee)=\varPsi_{\sigma}^*(\mu_k^*(a^\vee_k))&=a_k^{-d_k}\varPsi_{\sigma}^*\left(\prod_{M_{ki}>0}(a_i^\vee)^{M_{ki}}+\prod_{M_{ki}<0}(a_i^\vee)^{-M_{ki}}\right)\\
		&=a_k^{-d_k}\left(\prod_{M_{ki}>0}a_i^{d_iM_{ki}}+\prod_{M_{ki}<0}a_i^{-d_iM_{ki}}\right)\\
		&=a_k^{-d_k}\left(\prod_{M_{ik}<0}(a_i^{-M_{ik}})^{d_k}+\prod_{M_{ik}>0}(a_i^{M_{ik}})^{d_k}\right).
	\end{align*}
	Then the tropicalization gives
	\[
		(\varPsi_{\sigma_k}\circ\mu_k)^t\colon \left\{
    \begin{aligned}
		  \ &\xi_{k}^\vee \mapsto d_k\min \Big\{\sum_{M_{ik}<0} -M_{ik}\xi_i^\vee, \sum_{M_{ik}>0} M_{ik}\xi_i^\vee\Big\}-d_k\xi_{k'}^\vee;\ & &\\
		  \ &\xi_{i}^\vee \mapsto d_i\xi_{i'}, & &\text{~for~} i\neq k,
    \end{aligned}\right.
	\]
	and,
	\[
		(\mu_k\circ\varPsi_\sigma)^t\colon\left \{
    \begin{aligned}
		  \ &\xi_{k}^\vee \mapsto \min \Big\{\sum_{M_{ik}<0} - d_k M_{ik}\xi_i^\vee, \sum_{M_{ik}>0} d_k M_{ik}\xi_i^\vee\Big\}-d_k\xi_{k'}^\vee;\ & &\\
		  \ &\xi_{i}^\vee \mapsto d_i\xi_{i'}, & &\text{~for~} i\neq k.
    \end{aligned}\right.
	\]
	where $\{\xi_i\}_{i\in I}$ is the natural basis of $\mathcal{A}_\sigma^t= \Hom(\mathbb{G}_{\mathbf{m}},\mathcal{A}_\sigma)$, and similarly for ${\mathcal{A}^\vee_\sigma}^t$, $\mathcal{A}_{\sigma'}^t$, and ${\mathcal{A}_{\sigma'}^\vee}^t$. Thus
	\[
		(\mu \circ\varPsi_{\sigma})^t=(\varPsi_{\mu(\sigma)}\circ\mu)^t. \tag*{\qedhere}
	\] 
\end{proof}

 Note that our tropical map $\psi_\sigma$ is in general an injection (but not a bijection) of the lattice $\mathcal{A}_{\sigma}^t$ into the lattice ${\mathcal{A}^\vee_{\sigma}}^t$.

Given a split torus $H$ of rank $r$ and cluster variety $\mathcal{A}$ generated by a seed $\sigma=(I,J,M)$, denote $\widetilde{\mathcal{A}}=H\times \mathcal{A}$ the {\em extension} of $\mathcal{A}$ by $H$. Any choice of isomorphism of tori $H \cong\mathbb{G}_{\mathbf{m}}^r$ gives an isomorphism of $\widetilde{\mathcal{A}}$ and the cluster variety $\mathcal{A}_{|\widetilde{\sigma}|}$ generated by the seed
\[
  \widetilde{\sigma}:=\left(I\cup \{1,\dots,r\},J, \diag(M,0)\right).
\]
The variety $\widetilde{\mathcal{A}}$ will be called a {\em decorated} cluster variety, or cluster variety if the decoration $H$ is clear from the context. Note that $H=X_*(H)\otimes_{\mathbb{Z}} \mathbb{G}_\mathbf{m}$ and consider the group $H^\vee=X^*(H)\otimes_{\mathbb{Z}} \mathbb{G}_\mathbf{m}$. Then $H^\vee$ is the Langlands dual group of $H$ (in a slightly more general sense than was recalled in Section \ref{backgroundsection}). Define the (Langlands) dual of $\widetilde{\mathcal{A}}$ as $\widetilde{\mathcal{A}}^\vee:=H^\vee\times \mathcal{A}^\vee$. Given a double cluster variety $(\mathcal{A},\mathcal{A}^\vee, \mathbf{d},d)$, choose homomorphisms of tori $\varPsi^{H}\colon H\to H^\vee$ and $\varPsi^{H^\vee}\colon H^\vee\to H$ such that $\varPsi^{H^\vee}\circ\varPsi^{H}$ simply raises each coordinate to the $d$ power. Then the tuple $(\widetilde{\mathcal{A}}, \widetilde{\mathcal{A}}^\vee, \mathbf{d}, d, \varPsi^H, \varPsi^{H^\vee})$ is a {\em decorated double} cluster variety. We will often write $(\widetilde{\mathcal{A}}, \widetilde{\mathcal{A}}^\vee)$ for short.

On each seed of a decorated double cluster variety $(\widetilde{\mathcal{A}}, \widetilde{\mathcal{A}}^\vee)$, the comparison maps extends to:
\[
	\varPsi^{H}\times\varPsi_{\sigma}\colon \widetilde{\mathcal{A}}_{\sigma}\to \widetilde{\mathcal{A}}_{\sigma}^\vee  \quad \text{~and~}\quad \varPsi^{H^\vee}\times\varPsi_{\sigma^\vee}\colon \widetilde{\mathcal{A}}_{\sigma}^\vee \to \widetilde{\mathcal{A}}_{\sigma}.
\]
Let $\psi^{H}=(\varPsi^H)^t\colon (H)^t\to (H^\vee)^t$. By Proposition \ref{Pro:compareCluster}, the maps $\psi^H\times \psi_{\sigma}$ agree for all seeds $\sigma$.

\section{Double Bruhat Cells in \texorpdfstring{$G$}{G}  and \texorpdfstring{$G^\vee$}{Gvee} and Their Potentials}
\label{potentialsection}

In this section, we first recall the construction of a cluster algebra structure on the double Bruhat cell $G^{u,v}$ beginning with a double reduced word $\mathbf{i}$ for $(u,v)$. Then we show that $(G^{u,v}, G^{\vee;u,v})$ is a decorated double cluster variety with tropical comparison map $\psi_{\sigma(\mathbf{i})}$. Next we introduce ``factorization parameter'' toric charts $x_{\mathbf{i}}$ and $x_{\mathbf{i}}^\vee$ for $G^{u,v}$ and $G^{\vee;u,v}$, as well as corresponding comparison maps $\psi_{\mathbf{i}}$. We then focus on the double Bruhat cells $G^{w_0,e}$ and prove one of our main technical results, Theorem \ref{Main}. 
This says that the tropical comparison map
$\psi_{\sigma(\mathbf{i})}$ defined in terms of generalized minors agrees with the tropical comparison map $\psi_{\mathbf{i}}$ defined in terms of factorization parameters. Finally, we recall the definition of the {\em Berenstein-Kazhdan} (BK) potentials, the associated BK cones, and show that the comparison map $\psi_{\mathbf{i}}$ respects the crystal structure of the BK cones of $G$ and $G^\vee$.

\subsection{Backgroud on (Reduced) Double Bruhat Cells}

Let $G$ be a semisimple algebraic group as before. By fixing the simple roots $\alpha_i$ as in Section \ref{backgroundsection}, we have a maximal torus $H\subset G$ and a pair of opposite Borel subgroups $B, B_-$ of $G$ containing $H$. Denote by $U$ and $U_-$ the corresponding unipotent radicals of $B$ and $B_-$. Each triple $\alpha_i^\vee,E_i,F_i$ determines a group homomorphism $\phi_i\colon\SL_2\to G$ given by
\[
  \phi_i
    \begin{bmatrix}
      1 &  0\\
      a & 1
    \end{bmatrix}= \exp(a F_i) \subset U^-,~ 
  \phi_i
    \begin{bmatrix}
       1 & a \\
       0 & 1
    \end{bmatrix}= \exp(a E_i) \subset U,~
  \phi_i
    \begin{bmatrix}
      c & 0 \\
      0 & c^{-1}
    \end{bmatrix}=\alpha_i^\vee(c)\subset H
\]
for $a\in \mathbb{G}_{\bf{a}}$ and $c\in \mathbb{G}_{\bf{m}}$. Let $W=N(H)/H$ be the Weyl group of $G$ and $s_i\in W$ be the simple reflection generated by simple root $\alpha_i$. 
Let $w_0$ be the longest element in $W$ with 
\[
  m=\ell(w_0).
\]
The action of $W$ on $H$ gives rise to the action of $W$ on the character lattice $X^*(H)$, {\em i.e.} 
\[
  h^{w(\gamma)}=(w^{-1}hw)^{\gamma},\quad \gamma\in X^*(H),~h\in H.
\]
Using the $\SL_2$ homomorphisms $\{\phi_i\}$, define for $i \in \{1,\dots, r\}$,
\begin{equation}
\label{factorizationpars}
  \overline{s_i}:=\phi_i
    \begin{bmatrix} 
      0 & -1 \\
      1 & 0 
    \end{bmatrix},\ 
     x_{i}(t):=\phi_i
    \begin{bmatrix} 
      1 & t \\
      0 & 1 
    \end{bmatrix},\
      y_{i}(t):=\phi_i
    \begin{bmatrix} 
      1 & 0 \\
      t & 1 
    \end{bmatrix},\
     x_{-i}(t):=\phi_i
    \begin{bmatrix} 
      t^{-1} & 0 \\
      1 & t 
    \end{bmatrix}.\
\end{equation}
The $\overline{s}_i$'s satisfy the Coxeter relations of $W$, thus any decomposition of $w\in W$ into simple reflections gives the same lift $\overline{w}\in G$.

Let $g\mapsto g^\iota$ be the group antiautomorphism of $G$ given by
\[
	\alpha_i^\vee(c)^\iota = \alpha_i^\vee(-c), \quad x_i(t)^\iota = x_i(t), \quad y_i(t)^\iota = y_i(t),~i\in[1,r].
\]
Similarly, let $g\mapsto g^T$ be the group antiautomorphism of $G$ given by
\[
	\alpha_i^\vee(c)^T = \alpha_i^\vee(c), \quad x_i(t)^T = y_i(t), \quad y_i(t)^T = x_i(t),~i\in[1,r].
\]
Let $G_0 = U_- H U\subset G$ be the \emph{Gaussian decomposable} elements of $G$. For $g \in G_0$, write $g= [g]_- [g]_0 [g]_+$, where $[g]_-\in U_-$, $[g]_0\in H$, and $[g]_+\in U$. 

For a dominant weight $\mu\in X^*_+(H)$, the \emph{principal minor} $\Delta_\mu\in \mathbb{Q}[G]$ is uniquely determined by
\[
  \Delta_\mu(u_-au):=\mu(a), \text{~for any~} u_-\in U_-, a\in H, u\in U.
\]
For any two weights $\gamma$ and $\delta$ of the form $\gamma=w\mu$ and $\delta=v\mu$, where $w,v\in W$, the {\em generalized minor} $\Delta_{w\mu,v\mu}\in \mathbb{Q}[G]$ is given by
\[
  \Delta_{\gamma,\delta}(g)=\Delta_{w\mu,v\mu}(g):=\Delta_\mu(\overline w^{\,-1}g\overline v), \text{~for all~} g\in G.
\]
If $a_1,a_2\in H$ and $g\in G$, then
\begin{equation}
\label{BonusIdentity}
\Delta_{\gamma,\delta}(a_1 g a_2)= a_1^{\gamma} a_2^{\delta} \Delta_{\gamma,\delta}(g).
\end{equation}
If $G={\rm SL}_n$, the generalized minors are minors. 

For each pair of Weyl group elements $(u,v)$, a \emph{reduced} double Bruhat cell is defined by:
\[
  L^{u,v}:=U\overline{u}U\cap B_-vB_-.
\]
Note that multiplication in $G$ induces a biregular isomorphism $H\times L^{u,v}\cong G^{u,v}$. Let $\widehat{L}^{u,v}$ be the reduced double Bruhat cell of the universal cover $\widehat{G}$ of $G$ and let
\[
  \mathcal{p}\colon \widehat{G}\to G
\]
be the covering map. The cell $\widehat{L}^{u,v}$ can be characterized by the following 
\begin{Pro}\cite[Proposition 4.3]{BZ01}\label{charaofz}
  An element $x\in \widehat{G}^{u,v}$ belongs to $\widehat{L}^{u,v}$ if and only if 
  \[
    \Delta_{u\omega_i,\omega_i}(x)=1, \quad \forall i\in \bm{I}.
  \] 
\end{Pro}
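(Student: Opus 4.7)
The plan is to leverage the biregular isomorphism $\widehat{H} \times \widehat{L}^{u,v} \cong \widehat{G}^{u,v}$ so that any $x \in \widehat{G}^{u,v}$ factors uniquely as $x = a z$ with $a \in \widehat{H}$, $z \in \widehat{L}^{u,v}$. The identity \eqref{BonusIdentity} immediately gives
\[
\Delta_{u\omega_i, \omega_i}(x) = a^{u\omega_i}\,\Delta_{u\omega_i, \omega_i}(z).
\]
So the statement reduces to two claims: (i) $\Delta_{u\omega_i,\omega_i}(z) = 1$ for every $z \in \widehat{L}^{u,v}$ and every $i \in \bm{I}$; (ii) the vanishing $a^{u\omega_i}=1$ for all $i\in\bm{I}$ forces $a=e$ in $\widehat{H}$. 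Granting these, the forward direction is (i), and the reverse direction follows by dividing out $a$ using (ii).

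For claim (i), I would write $z = u_1 \overline{u}\, u_2$ with $u_1, u_2 \in U$ (possible since $z\in U\overline{u}U$) and use the matrix-coefficient realization $\Delta_{\omega_i}(g) = \langle v_{\omega_i}^*, g\, v_{\omega_i}\rangle$ on the fundamental representation $V_{\omega_i}$. The computation becomes
\[
\Delta_{u\omega_i,\omega_i}(z) \;=\; \langle v_{\omega_i}^*,\; \overline{u}^{-1} u_1 \overline{u} \cdot u_2 \,v_{\omega_i}\rangle \;=\; \langle v_{\omega_i}^*,\; (\overline{u}^{-1} u_1 \overline{u}) v_{\omega_i}\rangle,
\]
using that $u_2$ stabilizes the highest-weight vector $v_{\omega_i}$. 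The element $y := \overline{u}^{-1} u_1 \overline{u}$ is unipotent and lies in the group $\overline{u}^{-1} U \overline{u}$, whose root weights all lie in $u^{-1}(\Phi^+)$. Hence $y \,v_{\omega_i}$ differs from $v_{\omega_i}$ only by vectors of weight $\omega_i + u^{-1}\beta$ for nonzero sums of positive roots $\beta$; since $\omega_i$ is the highest weight of $V_{\omega_i}$, its weight space is one-dimensional and spanned by $v_{\omega_i}$, so $\langle v_{\omega_i}^*, y\,v_{\omega_i}\rangle = 1$.

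For claim (ii), I would use that $\{\omega_i\}_{i\in\bm{I}}$ is a $\mathbb{Z}$-basis of the full weight lattice $P = X^*(\widehat{H})$ (this is where passing to the universal cover is essential), and that the Weyl group acts by lattice automorphisms on $P$. Consequently $\{u\omega_i\}_{i\in\bm{I}}$ is also a $\mathbb{Z}$-basis of $P$, so the conditions $a^{u\omega_i}=1$ for all $i$ force every character of $\widehat{H}$ to be trivial on $a$, giving $a=e$. Combining the two claims, $\Delta_{u\omega_i,\omega_i}(x)=1$ for all $i$ iff $a=e$ iff $x = z \in \widehat{L}^{u,v}$.

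The main obstacle is the bookkeeping in claim (i): one has to be confident that no "extra" weight-$\omega_i$ contributions sneak back into $y\,v_{\omega_i}$ after the conjugation by $\overline{u}$. This is a structural point — it hinges entirely on $\omega_i$ being a \emph{highest} weight of $V_{\omega_i}$ (so the weight-$\omega_i$ space is one-dimensional), and on the roots of $\overline{u}^{-1}U\overline{u}$ being the non-zero image $u^{-1}(\Phi^+)$. Everything else is a clean application of the Gauss decomposition identity \eqref{BonusIdentity} and standard Weyl-lattice facts.
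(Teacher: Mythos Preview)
Your proof is correct. The paper does not give its own proof of this statement; it simply cites \cite[Proposition 4.3]{BZ01}. So there is nothing to compare against, and your argument stands as a valid self-contained justification.

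A couple of minor comments. In claim~(i), the phrase ``since $\omega_i$ is the highest weight of $V_{\omega_i}$, its weight space is one-dimensional'' is true but slightly beside the point: what you actually use is that the extra terms in $y\,v_{\omega_i}$ have weight $\omega_i + u^{-1}\beta$ with $\beta\neq 0$, hence weight $\neq \omega_i$, so $v_{\omega_i}^*$ kills them regardless of the dimension of the $\omega_i$-weight space. The one-dimensionality would only be needed if you had to argue that no \emph{other} $\omega_i$-weight vector appears, but your weight bookkeeping already rules that out. Also, you implicitly use that the matrix-coefficient formula $\Delta_{\omega_i}(g)=\langle v_{\omega_i}^*, g\,v_{\omega_i}\rangle$ defines a regular function on all of $\widehat{G}$ (not just on the Gaussian-decomposable locus $G_0$); this is standard, but worth noting since $\overline{u}^{-1}u_1\overline{u}\,u_2$ need not lie in $G_0$.
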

\begin{Cor}
The restriction of $\mathcal{p}$ to $\widehat{L}^{u,v}$ is an isomorphism $\widehat{L}^{u,v}\to L^{u,v}$.
\end{Cor}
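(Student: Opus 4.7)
The plan is to combine two ingredients: the biregular isomorphism $H\times L^{u,v}\cong G^{u,v}$ (and its analogue $\widehat H\times\widehat L^{u,v}\cong\widehat G^{u,v}$) mentioned just above Proposition \ref{charaofz}, and the characterization from Proposition \ref{charaofz} itself. First I would check that $\mathcal{p}$ restricts to a morphism $\widehat L^{u,v}\to L^{u,v}$, which is immediate: $\mathcal{p}$ induces isomorphisms $\widehat U\to U$ and $\widehat U_-\to U_-$, and sends the canonical lift of $\overline{u}$ to $\overline{u}$, hence carries $\widehat U\,\hat{\overline u}\,\widehat U$ into $U\overline uU$ and $\widehat B_-v\widehat B_-$ into $B_-vB_-$. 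Under the product isomorphisms, $\mathcal{p}|_{\widehat G^{u,v}}$ then decomposes as a product morphism $\mathcal{p}|_{\widehat H}\times\mathcal{p}|_{\widehat L^{u,v}}$.

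With this in hand, the corollary would follow by a degree count: both $\mathcal{p}|_{\widehat G^{u,v}}$ and $\mathcal{p}|_{\widehat H}$ are finite étale of degree $|Z|$, so $\mathcal{p}|_{\widehat L^{u,v}}$ is finite étale of degree $1$, hence an isomorphism. Equivalently, bijectivity can be checked directly. For injectivity, if $\hat x_1,\hat x_2\in\widehat L^{u,v}$ have the same image, then $\hat x_2=z\hat x_1$ with $z\in Z\subset\widehat H$, and Proposition \ref{charaofz} together with \eqref{BonusIdentity} yields
\[
    1=\Delta_{u\omega_i,\omega_i}(\hat x_2)=z^{u\omega_i}\Delta_{u\omega_i,\omega_i}(\hat x_1)=z^{u\omega_i}
\]
for every $i\in\bm I$; since $\{u\omega_i\}_i$ is a $\mathbb{Z}$-basis of the weight lattice $P$ of $\widehat G$ and $\widehat H\cong\Hom(P,\mathbb{G}_{\mathbf m})$, this forces $z=e$. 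For surjectivity, given $x\in L^{u,v}$ I would take any lift $\widetilde x\in\widehat G^{u,v}$, write it uniquely as $\widetilde x=\hat h\hat x$ with $\hat h\in\widehat H$ and $\hat x\in\widehat L^{u,v}$, and use the uniqueness of the decomposition $x=e\cdot x$ in $H\times L^{u,v}$ to conclude $\mathcal{p}(\hat h)=e$ (so $\hat h\in Z$) and $\mathcal{p}(\hat x)=x$.

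The main obstacle is the surjectivity. Starting from Proposition \ref{charaofz} alone, one is led to solve $z^{u\omega_i}=\Delta_{u\omega_i,\omega_i}(\widetilde x)^{-1}$ for $z\in Z$; the basis property of $\{u\omega_i\}$ uniquely determines a solution in $\widehat H$, but it is not a priori evident that this solution lies in the subgroup $Z$. Routing through the product decomposition sidesteps this cleanly, because the uniqueness of the $H\times L^{u,v}$ decomposition on the base automatically forces the correction to be central, without requiring any direct examination of the minors $\Delta_{u\omega_i,\omega_i}$ along $L^{u,v}$.
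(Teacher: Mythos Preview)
Your argument is correct. The injectivity step is essentially identical to the paper's: two lifts of the same point differ by an element of $Z\subset\widehat H$, and Proposition~\ref{charaofz} combined with \eqref{BonusIdentity} forces all its (fundamental-weight) characters to be trivial, hence the element is $e$. The only cosmetic difference is that the paper multiplies on the right and obtains $h^{\omega_i}=1$, while you multiply on the left and obtain $z^{u\omega_i}=1$; since $u$ acts as a $\mathbb{Z}$-linear automorphism of $P$, these are equivalent.

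Where you genuinely go beyond the paper is in treating surjectivity. The paper's proof simply writes ``$\mathcal{p}^{-1}(x)\subset\widehat L^{u,v}$'' and concludes ``there is a unique lift of $x$'', leaving existence implicit. Your route through the product decompositions $\widehat G^{u,v}\cong\widehat H\times\widehat L^{u,v}$ and $G^{u,v}\cong H\times L^{u,v}$ makes this clean: any lift of $x\in L^{u,v}$ lands in $\widehat G^{u,v}$, its $\widehat H$-component must map to $e$ by uniqueness of the factorization downstairs, so the $\widehat L^{u,v}$-component is a preimage. The degree-count version packages both halves at once and is arguably the most economical phrasing. Either way, you have supplied the part the paper elides.
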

\begin{proof}
  For $h\in \widehat{H}$, we know $h=\id$ if and only if $h^{\omega_i}=1$ for all $i\in \bm{I}$. Let $x\in L^{u,v}$ and consider some $\widehat{x}, \widehat{x}'\in \mathcal{p}^{-1}(x)\subset \widehat{L}^{u,v}$. Then $\widehat{x}'=\widehat{x}h$ for some $h\in \widehat{H}$. By Proposition \ref{charaofz} and \eqref{BonusIdentity} we have $h^{\omega_i}=1$ for all $i$, which implies there is a unique lift of $x$.
\end{proof}
Therefore the generalized minors $\Delta_{u\omega_i,v\omega_i}$ can be viewed as well defined functions on $L^{u,v}$ under the isomorphism $\mathcal{p}$. By abuse of notation, we write $\Delta_{u\omega_i,v\omega_i}(z)$ for $z\in L^{u,v}$ instead of $\Delta_{u\omega_i,v\omega_i}(\mathcal{p}^{-1}z)$.

\subsection{Double Cluster Algebras on Double Bruhat Cells}\label{dualondoublecell}

In this section, we recall how to make $G^{u,v}$ into a cluster variety, for any pair $(u,v) \in W\times W$. We will begin by working with $L^{u,v}$. After decomposing $G^{u,v}=H\times L^{u,v}$, we will get a decorated cluster variety $G^{u,v}$ by extending $L^{u,v}$ to $H\times L^{u,v}$.

A \emph{double reduced word} $\mathbf{i}=(i_1,\dots,i_n)$ for $(u,v)$ is a shuffle of a reduced word for $u$, written in the alphabet $\{-r,\dots,-1\}$, and a reduced word for $v$, written in the alphabet $\{1,\dots,r\}$, where $n=\ell(u)+\ell(v)$. For $k\in [1,n]$, we denote by
\begin{equation}\label{k+}
    k^+=\min\{j\mid j>k,|i_j|=|i_k|\}.
\end{equation}
If $|i_j|\neq |i_k|$ for all $j>k$, we set $k^+=n+1$. An index $k$ is \emph{$\mathbf{i}$-exchangeable} if $k^+\in [1,m]$. Let $\bm{e}(\mathbf{i})$ denote the set of all $\mathbf{i}$-exchangeable indices.
\begin{Rmk}\label{aboutsign}
	The minus signs on the letters of the reduced word for $u$ are occasionally troublesome, so let us make the following abbreviations. For $i,j\in [-r,-1]\cup[1,r]$, let
	\[
		d_i = d_{|i|},~a_{i,j} = a_{|i|,|j|},~\omega_i = \omega_{|i|}, s_{i}=s_{|i|}
	\]
	extending the notation for the skew-symmetrizer, Cartan matrix, and fundamental weights, and simple reflection respectively. Our notation is set up in this way to agree with that of \cite{BFZ}.
\end{Rmk}

Fix a double reduced word $\mathbf{i}$ of $(u,v)$. We will define a seed $\sigma(\mathbf{i}):=(I,J,M(\mathbf{i}))$ as follows. Let $I=[-r,-1]\cup{\bm e}(\mathbf{i})$ and $J=\bm{e}(\mathbf{i})$. The $n\times n$ matrix $M(\mathbf{i})$ for the seed $\sigma(\mathbf{i})$ is constructed as follows. For $k,l\in I$, set $p=\max\{k,l\}$ and $q=\min\{k^+,l^+\}$, and let $\epsilon(k)$ be the sign of $k$. Then by \cite[Remark 2.4]{BFZ}
\begin{equation}\label{matixinseed}
  \begin{split}
    M(\mathbf{i})_{kl}=\left\{
    \begin{aligned}
      &-\epsilon(k-l)\cdot\epsilon(i_p),& &\text{~if~} p=q;\\
      &-\epsilon(k-l)\cdot\epsilon(i_p)\cdot a_{i_k,i_l},& &\text{~if~} p<q \text{~and~} \epsilon(i_p)\epsilon(i_q)(k-l)(k^+-l^+)>0;\\
      &0, & &\text{~otherwise~}.
    \end{aligned}\right.
  \end{split}
\end{equation}
Here, recall that $A=[a_{ij}]$ is the symmetrizable Cartan matrix of $\mathfrak{g}$. Let 
\begin{equation}
\label{ddef}
	\bm{d}_{\mathbf{i}}=\{d_{i_1},\dots,d_{i_m}\},
\end{equation}
where the sequence $\bm{d}=\{d_1,\dots,d_r\}$ is the fixed symmetrizer of $A$. It is easy to see that $\bm{d}_{\mathbf{i}}$ is a skew-symmetrizer of $M(\mathbf{i})$. As before, we fix a positive integer $d$ such that each $d_i$ divides $d$.

For a double reduced word $\mathbf{i}$ of $(u,v)$ and $k\in [1,n]$, denote by
\[
	u_k=\prod_{\substack{l=1,\ldots,k\\ i_l<0}} s_{i_l}, \quad v_k=\prod_{\substack{l=n,\ldots,k+1\\ i_l>0}} s_{i_l},
\]
where the index is increasing in the product on the left, and decreasing in the product on the right.
Define the generalized minors 
\begin{equation}\label{fullcluster}
	\Delta_k:=\Delta_{u_k\omega_{i_k},v_k\omega_{i_k}}\text{~for~}k\in [1,n]; \quad \Delta_k:=\Delta_{\omega_k,v^{-1}\omega_k}\text{~for~}k\in[-r,-1].	
\end{equation}
\begin{Thm}\cite[Theorem 2.10]{BFZ}\label{Thm2.10}
  For every double reduced word $\mathbf{i}$ for $(u,v)$, let $\mathcal{A}_{|\sigma(\mathbf{i})|}$ be the cluster variety generated by the seed $\sigma(\mathbf{i})$. Then the map given by
  \[
    \varphi^*\colon \mathbb{Q}[\mathcal{A}_{|\sigma(\mathbf{i})|}]\to \mathbb{Q}[L^{u,v}] \ :\ a_k\mapsto \Delta_k, \ \text{~for~}\  k \in I=[-r,-1]\cup{\bm e}(\mathbf{i})
  \]
  is an isomorphism of algebras.
\end{Thm}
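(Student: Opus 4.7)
The plan is to identify $L^{u,v}$ with the cluster variety $\mathcal{A}_{|\sigma(\mathbf{i})|}$ by showing that the generalized minors $\Delta_k$ of \eqref{fullcluster} define an open embedding into the initial cluster torus and satisfy all mutation exchange relations, so that the tuple $(\Delta_k)_{k\in I}$ constitutes a cluster in the correct sense.

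First I would verify that the $\Delta_k$ are invertible (i.e., non-vanishing) regular functions on $L^{u,v}$, so that the formula $a_k\mapsto \Delta_k$ at least defines a morphism $L^{u,v}\to \mathcal{A}_{\sigma(\mathbf{i})}$ into the initial torus. For the ``frozen'' indices $k\in[-r,-1]$ this is immediate from the characterization $L^{u,v}\subset B_-vB_-$; for the exchangeable indices $k\in\bm{e}(\mathbf{i})$ one shows that $u_k\omega_{i_k}$ and $v_k\omega_{i_k}$ remain in the same $W$-orbit of a fundamental weight and uses the standard fact that $\Delta_{w\mu,v\mu}$ is non-vanishing on $U\overline{u}U\cap B_-vB_-$ once $w\leqslant u$ and $v'\leqslant v$ in the Bruhat order (which is ensured by the bookkeeping $u_k,v_k$ coming from the double word $\mathbf{i}$).

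Next I would show that $\varphi^*$ is injective by exhibiting a factorization parameter chart $x_{\mathbf{i}}\colon \mathbb{G}_{\mathbf{m}}^n\to L^{u,v}$, built out of the elementary generators $x_{\pm i}(t)$ from \eqref{factorizationpars}, which is a birational isomorphism. The composition $\Delta_k\circ x_{\mathbf{i}}$ is a Laurent polynomial in the factorization parameters that can be expanded explicitly (for example, via pseudo-line/wiring-diagram combinatorics), and one checks that the resulting transition map $\mathbb{G}_{\mathbf{m}}^n\to \mathbb{G}_{\mathbf{m}}^n$ is a birational monomial-times-unipotent map, hence dominant. This gives injectivity of $\varphi^*$ and identifies $\mathbb{Q}[L^{u,v}]$ with a subring of the Laurent polynomial ring $\mathbb{Q}[\mathcal{A}_{\sigma(\mathbf{i})}]=\mathbb{Q}[a_k^{\pm 1}:k\in I]$.

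The heart of the proof, and the main obstacle, is verifying that each mutation in an exchangeable direction $k\in \bm{e}(\mathbf{i})$ corresponds, on the minor side, to a three-term identity
\[
\Delta_k\cdot \Delta_k' \;=\; \prod_{j:M(\mathbf{i})_{jk}>0}\Delta_j^{M(\mathbf{i})_{jk}} \;+\; \prod_{j:M(\mathbf{i})_{jk}<0}\Delta_j^{-M(\mathbf{i})_{jk}},
\]
where $\Delta_k'=\Delta_{u_{k^+}\omega_{i_k},v_{k^+}\omega_{i_k}}$ is the generalized minor attached to the ``next occurrence'' index $k^+$ defined in \eqref{k+}. This requires comparing the combinatorial data in $M(\mathbf{i})$ (case analysis on whether $p=q$ or $p<q$ in \eqref{matixinseed}) with the known generalized Plücker-type identities for minors $\Delta_{\gamma,\delta}$ on a semisimple group, which take exactly the shape of a short Plücker/Desnanot--Jacobi relation. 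One proves these identities uniformly by reducing to a rank-two (or at worst rank-three) subgroup built from the relevant $\phi_i$'s, where they become classical $\SL_2$/$\SL_3$ minor identities. The combinatorial bookkeeping needed to read off the exchange relation from $M(\mathbf{i})_{\cdot k}$ is intricate but mechanical once the structure of $u_k,v_k$ around the position $k$ is understood.

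Finally, to obtain the isomorphism on coordinate rings rather than merely a birational one, I would invoke the Laurent phenomenon and the starfish-type criterion of Berenstein--Fomin--Zelevinsky: the upper cluster algebra $\mathbb{Q}[\mathcal{A}_{|\sigma(\mathbf{i})|}]$ equals the intersection of the Laurent rings over the initial seed and its adjacent seeds, provided the initial cluster variables and their mutations are all coprime regular functions. The previous two steps already display the $\Delta_k$ and their mutations $\Delta_k'$ as regular coprime functions on $L^{u,v}$, and the algebra they generate is known to equal $\mathbb{Q}[L^{u,v}]$ by Fomin--Zelevinsky's earlier description of the coordinate ring of a double Bruhat cell in terms of generalized minors. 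Combining these yields the desired isomorphism $\varphi^*\colon \mathbb{Q}[\mathcal{A}_{|\sigma(\mathbf{i})|}]\xrightarrow{\sim}\mathbb{Q}[L^{u,v}]$.
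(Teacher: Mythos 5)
This theorem is stated in the paper as a citation of \cite[Theorem 2.10]{BFZ}, and the paper's ``proof'' is only a one-sentence reduction in the subsequent remark: the BFZ result gives the cluster structure on $\widehat{G}^{u,v}$ for the simply connected cover, with cluster variables $\{\Delta_k\mid k\in[-r,-1]\cup[1,n]\}$, and on $\widehat{L}^{u,v}$ the ``last-occurrence'' frozen variables $\Delta_{u\omega_i,\omega_i}$ specialize to $1$, so the statement for $L^{u,v}$ (with index set $[-r,-1]\cup\bm{e}(\mathbf{i})$) follows by the identification $L^{u,v}\cong\widehat{L}^{u,v}$ from Proposition~\ref{charaofz}. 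Your proposal takes a genuinely different route: you re-derive the BFZ theorem itself from first principles rather than specialize it. That is defensible as an alternative, but it obscures which step is actually new in this context. The one thing the paper needs to establish --- and what your proposal does not address --- is precisely the passage from the cited statement about $\widehat{G}^{u,v}$ to the statement about $L^{u,v}$, namely that freezing and setting to $1$ the variables indexed by $[1,n]\setminus\bm{e}(\mathbf{i})$ produces the seed $\sigma(\mathbf{i})=(I,J,M(\mathbf{i}))$ with $I=[-r,-1]\cup\bm{e}(\mathbf{i})$.

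Even as a reconstruction of the underlying BFZ argument, the final paragraph has a gap. You write that ``the algebra they generate is known to equal $\mathbb{Q}[L^{u,v}]$'', conflating the algebra generated by the cluster variables (the lower bound / cluster algebra) with the upper cluster algebra $\mathbb{Q}[\mathcal{A}_{|\sigma(\mathbf{i})|}]$, which is an \emph{intersection} of Laurent rings over adjacent seeds. BFZ's actual argument is two inclusions: (a) $\mathbb{Q}[G^{u,v}]$ sits inside each Laurent ring because generalized minors are invertible on the relevant chart, and (b) the intersection of Laurent rings over the star of the initial seed lands in $\mathbb{Q}[G^{u,v}]$ by a codimension-two argument (the starfish lemma applied after checking coprimality of adjacent cluster variables). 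Neither inclusion follows from ``the $\Delta_k$ generate $\mathbb{Q}[L^{u,v}]$''; indeed, equality of the cluster algebra and the upper cluster algebra is a separate (and in general false) statement. If you want to carry out the from-scratch approach, you should (i) prove the generalized exchange identities at the level of $\widehat{G}$ (not $L^{u,v}$, where the minors are only defined via the lift), (ii) run the starfish/coprimality argument for the upper cluster algebra, and then (iii) specialize $\Delta_{u\omega_i,\omega_i}=1$ as the paper does. As written, your last step invokes a conclusion rather than proving the inclusion it needs.
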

\begin{Rmk}
  By \cite[Eq (2.11)]{BFZ}, the set of cluster variables on the chart $\sigma(\mathbf{i})$ of the double Bruhat cell $\widehat{G}^{u,v}$ for simply connected $\widehat{G}$ is $\{\Delta_k\mid k\in [-r,-1]\cup [1,n]\}$. Recall that  for $x\in \widehat{L}^{u,v}\subset \widehat{G}^{u,v}$, we have $\Delta_{u\omega_i,\omega_i}(x)=1$.  Thus Theorem \ref{Thm2.10} follows from \cite[Theorem 2.10]{BFZ} by applying $\Delta_{u\omega_i,\omega_i}=1$ and identifying $\mathbb{Q}[L^{u,v}]$ and $\mathbb{Q}[\widehat{L}^{u,v}]$.
\end{Rmk}

Since the Weyl groups of $G$ and $G^\vee$ are isomorphic, the reduced word $\mathbf{i}$ also gives the reduced double Bruhat cell $L^{\vee;u,v}$ for $G^\vee$ the structure of a cluster variety. Moreover, we have:
\begin{Cor}
   Fix $(u,v)\in W\times W$. Let ${\bm d}_\mathbf{i}$ be as in \eqref{ddef} and let $d$ be the integer fixed in Chapter \ref{backgroundsection}. Then the quadruple $(L^{u,v},L^{\vee;u,v}; {\bm d}_{\mathbf{i}},d)$ is a double cluster variety.
\end{Cor}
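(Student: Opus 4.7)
The plan is to unpack the definition of double cluster variety and verify each ingredient. Recall that a double cluster variety $(\mathcal{A},\mathcal{A}^\vee;\mathbf{d},d)$ requires that (i) $\mathcal{A}$ and $\mathcal{A}^\vee$ are cluster varieties whose seeds carry matrices $M$ and $-M^T$ respectively, (ii) $\mathbf{d}$ is a skew-symmetrizer of $M$, and (iii) each entry of $\mathbf{d}$ divides $d$. Theorem \ref{Thm2.10} already identifies $L^{u,v}$ with the cluster variety generated by $\sigma(\mathbf{i})=(I,J,M(\mathbf{i}))$. Since the Weyl groups of $G$ and $G^\vee$ are canonically identified, the same double reduced word $\mathbf{i}$ is a double reduced word for $(u,v)$ in $G^\vee$, so applying Theorem \ref{Thm2.10} to $G^\vee$ produces a cluster structure on $L^{\vee;u,v}$ generated by a seed $\sigma^\vee(\mathbf{i})=(I,J,M^\vee(\mathbf{i}))$, where $M^\vee(\mathbf{i})$ is obtained from the recipe \eqref{matixinseed} after replacing the entries of $A$ by those of $A^\vee=A^T$, i.e. $a^\vee_{ij}=a_{ji}$.

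The core calculation is to verify that $M^\vee(\mathbf{i})=-M(\mathbf{i})^T$, i.e. $M^\vee(\mathbf{i})_{kl}=-M(\mathbf{i})_{lk}$ for all $k,l\in I$. This follows by going through the three cases in \eqref{matixinseed}. Observe first that $p=\max\{k,l\}$, $q=\min\{k^+,l^+\}$, and the inequality $\epsilon(i_p)\epsilon(i_q)(k-l)(k^+-l^+)>0$ are all symmetric under swapping $k\leftrightarrow l$, so which case applies is the same on both sides. The overall sign flip arises from $\epsilon(l-k)=-\epsilon(k-l)$. In the case $p<q$ the Cartan factor transforms as $a_{i_l,i_k}=a^\vee_{i_k,i_l}$, matching the right-hand side exactly. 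In the case $p=q$ there is no Cartan factor; here I would note that $p=q$ with $k<l$ forces $l=k^+$ (since $l<l^+$), hence $|i_k|=|i_l|$, a fact I will also use below.

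Next I would check that $\mathbf{d}_\mathbf{i}=\{d_{i_1},\dots,d_{i_n}\}$ is a skew-symmetrizer of $M(\mathbf{i})$, i.e. $M(\mathbf{i})_{kl}\,d_{i_l}=-M(\mathbf{i})_{lk}\,d_{i_k}$. Again by case analysis on \eqref{matixinseed}: when $p<q$ the identity $a_{i_k,i_l}d_{i_l}=a_{i_l,i_k}d_{i_k}$, which is the symmetrizability of $A$, does the job; when $p=q$ the observation $|i_k|=|i_l|$ gives $d_{i_k}=d_{i_l}$ so the required relation becomes a tautology; and when the matrix entries vanish there is nothing to check. Finally, the positive integer $d$ with $d_i\mid d$ for all $i\in\bm{I}$ was fixed already in Section \ref{backgroundsection}, and since every entry of $\mathbf{d}_\mathbf{i}$ is one of the $d_i$, the same $d$ works here. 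Combining these ingredients with Theorem \ref{Thm2.10} yields the corollary.

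The only real obstacle is bookkeeping in the case analysis for \eqref{matixinseed}; in particular, the subtlety that $p=q$ forces $|i_k|=|i_l|$ is what keeps the case $p=q$ (where no Cartan factor is present) compatible with the skew-symmetrizer relation. Once that is observed, everything else is a direct transcription of the formula and the Langlands duality $A^\vee=A^T$.
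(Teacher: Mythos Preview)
Your proof is correct and follows the same approach as the paper's, which simply asserts that ``following the definitions'' one has $(I,J,M(\mathbf{i}))^\vee=(I,J,-M(\mathbf{i})^T)=(I,J,M^\vee(\mathbf{i}))$. You have supplied the explicit case analysis behind that assertion (and additionally re-verified that $\mathbf{d}_\mathbf{i}$ skew-symmetrizes $M(\mathbf{i})$, which the paper records just before the corollary), so the only difference is level of detail.
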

\begin{proof}
  What we need to show actually is $(L^{u,v})^\vee\cong L^{\vee;u,v}$, where $(L^{u,v})^\vee$ is the dual cluster variety of $L^{u,v}$. Let $\left(I,J,M^\vee(\mathbf{i})\right)$ be the initial seed of $L^{\vee;w_0,e}$. Following the definitions, one obtains
  \[
    \left(I,J, M(\mathbf{i})\right)^\vee=\left( I,J,-M(\mathbf{i})^T\right)=\left(I,J,M^\vee(\mathbf{i})\right). \tag*{\qedhere}
  \]
\end{proof}

For a seed $\sigma\in |\sigma(\mathbf{i})|$, denote by $\Psi_\sigma^L\colon L^{u,v}\to L^{\vee;u,v}$ the comparison map for the double cluster variety $(L^{u,v},L^{\vee;u,v})$. Extending the cluster variety $L^{u,v}$ by $H$, we get the decorated cluster variety $G^{u,v}\cong H\times L^{u,v}$. For any seed $\sigma$ on $L^{u,v}$, the following map gives a positive chart on $G^{u,v}$:
\begin{equation}\label{clusterchart}
   \id\times \sigma\colon H\times \mathbb{G}_{\mathbf{m}}^n\to H\times L^{u,v}=G^{u,v},
\end{equation}
which will be denoted by $\sigma$ as well if there is no ambiguity. Combining with $\varPsi^H\colon H \to H^\vee$ as in Proposition \ref{symmetrizer}, we have the following comparison map on the decorated cluster variety:
\begin{equation}\label{clustercompare}
  \varPsi_\sigma:=\varPsi^H\times \varPsi_\sigma^L\colon G^{u,v}= H\times L^{w_0,e}\to G^{\vee;u,v}=H^\vee \times L^{\vee;u,v}.
\end{equation}
The tuple $(G^{u,v}, G^{\vee; u,v}, { \bm d}_\mathbf{i}, d, \varPsi^H, \varPsi^{H^\vee})$ is then a decorated double cluster variety.

\subsection{Comparison map in factorization parameters}\label{dualondouble}

In this section we describe certain toric charts on double Bruhat cells which are positively equivalent to the ones already considered. We introduce a comparison map $\psi_\mathbf{i}$. In the next section we show this coincides (tropically) with the comparison map of Section \ref{dualondoublecell}.

Let $G$ be a semisimple algebraic group as before and $(u,v)$ a pair of elements in $W$. Recall we denote $n=\ell(u)+\ell(v)$. By \cite[Proposition 4.5]{BZ01}, the map
\begin{equation}\label{zcoord}
	x_\mathbf{i}\colon \mathbb{G}_{\bf{m}}^n \xrightarrow{\ \sim\ } \widehat{L}^{u,v} \xrightarrow{\ \mathcal{p}\ }L^{u,v}\ :\ 
	(t_1,\dots,t_n)\mapsto x_{i_1}(t_1)\cdots x_{i_n}(t_n),
\end{equation}
gives a toric chart on $L^{u,v}$. Thus factoring $G^{u,v}$ as $H\times L^{u,v}$ gives the toric chart:
\[
	x_{\mathbf{i}}\colon H\times \mathbb{G}_{\bf{m}}^n \xrightarrow{\ \sim\ } G^{u,v}=H\times L^{u,v}\ :\ 
	(h, t_1,\dots,t_n)\mapsto hx_{i_1}(t_1)\cdots x_{i_n}(t_n).
\]
We have overloaded the notation $x_\mathbf{i}$ here but the meaning will be clear from context.

As shown in \cite{BZ01}, if $\mathbf{i}$ is a double reduced word for $(u,v)$, then the factorization chart $x_{\mathbf{i}}$ on $G^{u,v}$ is positively equivalent to the cluster chart $\sigma(\mathbf{i})$. Moreover, if $\mathbf{i}$ and $\mathbf{i}'$ are both double reduced words for $(u,v)$, then the toric chart $x_\mathbf{i}$ is positively equivalent to $x_{\mathbf{i}'}$. If $x_{\mathbf{i}}(h,t_1,\dots, t_m) = x_{\mathbf{i}'} ( h,p_1,\dots, p_m)$, then the coordinates $p_j$'s can be expressed as rational functions of the $t_j$'s, via a series of positive equivalences \emph{moves}. These are given explicitly in Propositions 7.1, 7.2, and 7.3 of \cite{BZ01}, where they are called ``(mixed) $d$-moves''. 

For a double reduced word $\mathbf{i}$ of $(u,v)$, let $\varPsi_\mathbf{i} \colon G^{u,v} \to G^{\vee;u,v}$ be the positive rational map which is given in terms of the toric charts $x_\mathbf{i}$ and $x_\mathbf{i}^\vee$ by
 \begin{equation}\label{xtoxvee}
	x_\mathbf{i}(h, t_1,\ldots,t_n) \mapsto x_\mathbf{i}^\vee(\varPsi^H(h), t_1^{d_{i_1}},\ldots, t_n^{d_{i_n}}).
\end{equation}
We will write $\psi_\mathbf{i}=\varPsi_\mathbf{i}^t$ for the tropicalized comparison map. 
The comparison maps $\psi_\mathbf{i}$ for different double reduced words $\mathbf{i}$ all agree after tropicalization. 

\begin{Pro}\label{dmovesok}
	Let $\mathbf{i}$ and $\mathbf{i}'$ be double reduced words for $(u,v)$. Then the following diagram commutes,
	\[
		\begin{tikzcd}
			(G^{u,v},x_{\mathbf{i}})^t \arrow[rr, "\id^t"] \arrow[dd, "\psi_{\mathbf{i}}"] && (G^{u,v},x_{\mathbf{i}'})^t \arrow[dd, "\psi_{\mathbf{i}'}"] \\
			\\
			(G^{\vee;u,v},x^\vee_{\mathbf{i}})^t \arrow[rr, "(\id^\vee)^t"] && (G^{\vee;u,v},x^\vee_{\mathbf{i}'})^t 
		\end{tikzcd}
	\]
	where we recall that $\id^t=\id_{G^{u,v}}^t=(x_{\mathbf{i}'}^{-1}\circ x_{\mathbf{i}})^t$ by definition, and we abbreviate $\id^\vee= \id_{G^{\vee;u,v}}$.
\end{Pro}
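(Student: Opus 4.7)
The plan is to reduce to an elementary step and then verify the required identity move-by-move. Since tropicalization is functorial under composition of positive rational maps, and since, by \cite{BZ01}*{Section 7}, any two double reduced words $\mathbf{i}$ and $\mathbf{i}'$ for $(u,v)$ can be connected by a finite sequence of elementary $d$-moves---commutation relations $s_i s_j = s_j s_i$ when $a_{ij}=0$, rank-two braid relations of types $A_1\times A_1$, $A_2$, $B_2$ and $G_2$, and mixed moves involving one positive and one negative letter---it suffices to verify the commutativity of the square when $\mathbf{i}$ and $\mathbf{i}'$ differ by a single such elementary move.

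Fix such a move. Let $f = f_{\mathbf{i},\mathbf{i}'}\colon \mathbb{G}_{\mathbf{m}}^n \to \mathbb{G}_{\mathbf{m}}^n$ be the explicit positive rational map expressing the $\mathbf{i}'$-factorization parameters in terms of the $\mathbf{i}$-factorization parameters, so that in coordinates $\mathrm{id}^t$ equals $f^t$ (the $H$-factor is common to both charts and enters the two sides of the square in the same way via $\varPsi^H$). The corresponding move in $G^\vee$, which I denote $f^\vee$, is obtained from the same structural formula but with $A$ replaced by $A^T$, i.e. with every occurrence of $a_{ij}$ and $d_i$ replaced by $a_{ji}$ and $d_i^\vee = d/d_i$. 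In these charts $\psi_{\mathbf{i}}$ and $\psi_{\mathbf{i}'}$ act by multiplying the $k$-th coordinate by $d_{i_k}$ and $d_{i'_k}$ respectively, so the whole proposition reduces to the piecewise linear equality
\[
\psi_{\mathbf{i}'} \circ f^t \;=\; (f^\vee)^t \circ \psi_{\mathbf{i}}
\]
on $\mathbb{Z}^n$, which I would check case by case.

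The easy cases dispose of themselves: for a commutation move $f$ simply permutes a pair of coordinates, and the equality is automatic from the corresponding permutation of the multipliers; for the $A_2$-braid all three $d_{i_k}$ coincide, so multiplication by this common scalar commutes with any piecewise linear map $f^t$. The substantive cases are the $B_2$, $C_2$ and $G_2$ braids, and the genuinely mixed moves in which $d_i\neq d_j$. For these I would take the explicit formulas of \cite{BZ01}*{Propositions~7.1--7.3}, read off their tropicalizations, and check the identity directly. The structural feature I expect to exploit is that the exponents appearing in the $d$-moves are built precisely from the integers $d_i$ and $-a_{ij}$, so that passing to $f^\vee$ by transposing the Cartan data permutes those exponents in exactly the way required to absorb the pre- and post-composition by the coordinate-wise scalings of $\psi_{\mathbf{i}}$ and $\psi_{\mathbf{i}'}$. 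This explains, a posteriori, why these moves are named ``$d$-moves'' in \cite{BZ01}.

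The main obstacle is therefore the direct verification for the $B_2$ and $G_2$ braids: this is a careful but finite calculation, consisting of writing out a rational map with a fixed Newton polytope on each side and comparing the corresponding $\min$-expressions. The mixed moves then reduce either to a trivial commutation (when $a_{ij}=0$) or to a rational identity inside a rank-two subgroup of the form $\langle x_i(\cdot),x_{-j}(\cdot)\rangle$, which is handled by the same kind of check. Once all elementary cases have been settled, the general statement follows from the reduction in the first paragraph and the compatibility of tropicalization with composition of positive rational maps.
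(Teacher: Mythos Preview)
Your proposal is correct and follows essentially the same approach as the paper: reduce to a single elementary move via \cite{BZ01}*{Propositions 7.1--7.3} and verify the tropical identity case by case, with the rank-two braids as the substantive computations. The paper carries out one $B_2$-type case explicitly and notes, as a hint for $G_2$, that one should use the fact that $(A+B)^k$ and $A^k+B^k$ have the same tropicalization to discard spurious terms.
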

\begin{proof}
By the previous discussion it is enough to assume that $\mathbf{i}$ and $\mathbf{i}'$ are related by a single move. Then the proposition follows by direct computation; we will give the proof for one type of move and leave the rest to the reader.

Say $i,j\in \{-r,\dots, -1\}$ with $a_{i,j} = -1$ and $a_{j,i} = -2$. Without loss of generality assume $d_{i} = 1$ and $d_{j} = 2$. Let 
\[
	\mathbf{i}  = (i_1,\dots, i_k,  i,j,i,j, i_{k+5}, \dots, i_n),\quad
	\mathbf{i'}  = (i_1,\dots, i_k, j,i,j,i , i_{k+5}, \dots, i_n).
\]
Then by \cite[Proposition 7.3]{BZ01},
\begin{align*}
	\varPsi_{\mathbf{i}'}\circ x_{\mathbf{i}'}^{-1}\circ x_{\mathbf{i}}(h, t_1,\dots,  t_m)  & = \varPsi_{\mathbf{i'}}( h, p_1,\dots, p_m)\\
	& = (\varPsi^H(h), p_1^{d_{i_1}}, \dots, p_{k+1}^2,p_{k+2} ,p_{k+3}^2 ,p_{k+4} , \dots, p_m^{d_{i_m}}),
\end{align*}
where 
\begin{align*}
	p_{k+1} &=\left( \frac{t_{k+1}}{t_{k+2}} + \frac{t_{k+2}}{t_{k+3}} + \frac{1}{t_{k+4}}\right)^{-1}\  &
	p_{k+2} & = \left( \frac{1}{t_{k+1}}\left( \frac{t_{k+2}}{t_{k+3}} + \frac{1}{t_{k+4}}\right)^2 + \frac{1}{t_{k+3}}\right)^{-1}\\
	p_{k+3} & = t_{k+2} + t_{k+1} t_{k+4} + \frac{t_{k+2}^2 t_{k+4}}{t_{k+3}}\  &
	p_{k+4} &= t_{k+1} + t_{k+3}\left(\frac{t_{k+2}}{t_{k+3}} + \frac{1}{t_{k+4}}\right)^2
\end{align*}
and $p_i = t_i$ otherwise. On the other hand, again using \cite[Proposition 7.3]{BZ01} one finds
\begin{align*}
	(x_{\mathbf{i}'}^\vee)^{-1}\circ x_{\mathbf{i}}^\vee \circ \varPsi_{\mathbf{i}}(h, t_1,\dots,  t_m) & =(x_{\mathbf{i}'}^\vee)^{-1}\circ x_{\mathbf{i}}^\vee(\varPsi^H(h), t_1^{d_{i_1}}, \dots, t_m^{d_{i_m}}) \\
	& = (\varPsi^H(h),P_1,\dots, P_m),
\end{align*}
where
\begin{align*}
	P_{k+1} & = \left( \frac{1}{t_{k+4}^2} +\frac{1}{t_{k+2}^2}\left(\frac{t_{k+2}^2}{t_{k+3}} +t_{k+1}\right)^2\right)^{-1}\  &
	P_{k+2} & = \left( \frac{1}{t_{k+3}} + \frac{1}{t_{k+4}^2 t_{k+1}} + \frac{t_{k+2}^2}{t_{k+3}^2 t_{k+1}}\right)^{-1}\\
	P_{k+3} & = t_{k+4}^2\left(\frac{t_{k+2}^2}{t_{k+3}} + t_{k+1}\right)^2 +t_{k+2}^2\  &
	P_{k+4} & = \frac{t_{k+3}}{t_{k+4}^2} + \frac{t_{k+2}^2}{t_{k+3}} + t_{k+1}
\end{align*}
and $P_i = t_i^{d_{i}}$ otherwise. Then it is easy to verify that $\varPsi_\mathbf{i}$ and $\varPsi_\mathbf{i'}$ agree after tropicalization.

The proofs for the other types of move (described in Propositions 7.1, 7.2, and 7.3 of \cite{BZ01}) are along the same lines. The computation for the two types of moves associated to type $G_2$ are slightly more involved. One must show that some terms in the expressions for the $p_i$'s and $P_i$'s do not contribute after tropicalization; this can be done using the fact that the tropicalization of $(A + B)^k$ is the same as the tropicalization of $A^k + B^k$, for positive functions $A$ and $B$ and positive integers $k$. This verification is straightforward and tedious.
\end{proof}

\subsection{Compatibility of Positive Structures}

In this section and the remainder of the paper, we will focus on the variety $G^{w_0,e}$. We now prove one of the main results of this article: If $\mathbf{i}=(i_1,\ldots,i_m)$ is a double reduced word for $(w_0, e)$, then under the tropical change of coordinates between $(G^{w_0,e},x_{\mathbf{i}})^t$ and $(G^{w_0,e},\sigma)^t$, the map $\psi_\mathbf{i}:=\varPsi^t_{\mathbf{i}}$ agrees with $\psi_\sigma:=\varPsi^t_\sigma$, where $\varPsi_\mathbf{i}$ is defined in \eqref{xtoxvee} and $\varPsi_\sigma$ is defined in \eqref{clustercompare}. 

\begin{Thm}\label{Main}
    For a double reduced word $\mathbf{i}$ of $(w_0,e)$, and let $x_\mathbf{i}$ and $\sigma=\sigma(\mathbf{i})$ be the factorization chart and cluster charts for $G^{w_0,e}$, respectively. Then the following diagram commutes,
    \[
    	\begin{tikzcd}
			(G^{w_0,e}, x_\mathbf{i})^t \arrow[rr, "\id^t"] \arrow[dd, "\psi_\mathbf{i}"] && (G^{w_0,e}, \sigma)^t \arrow[dd, "\psi_\sigma"] \\
			\\
			(G^{\vee; w_0,e}, x_\mathbf{i}^\vee)^t \arrow[rr, "(\id^\vee)^t"] && (G^{\vee; w_0,e}, \sigma^\vee)^t
		\end{tikzcd}
 	\]
 	where we recall that $\id^t=(\sigma^{-1}\circ x_{\mathbf{i}})^t$ by definition.
\end{Thm}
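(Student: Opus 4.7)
Proof plan. The commutativity of the diagram is equivalent to the assertion that the two positive rational maps $\varPsi_\sigma,\varPsi_\mathbf{i}\colon G^{w_0,e}\to G^{\vee;w_0,e}$ have the same tropicalization. Since the cluster chart $\sigma^\vee$ has coordinates $a_k^\vee=\Delta_k^\vee$ for $k\in[-r,-1]\cup\bm{e}(\mathbf{i})$, it suffices to verify, separately for each $k$, that the two pullbacks $\varPsi_\sigma^*(a_k^\vee)$ and $\varPsi_\mathbf{i}^*(a_k^\vee)$, viewed as positive rational functions of the factorization coordinates $(h,t_1,\dots,t_m)$, tropicalize to the same piecewise linear function.

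Using \eqref{BonusIdentity}, I would factor $\Delta_k(hx_\mathbf{i}(t))=h^{u_k\omega_{i_k}}Q_k(t)$ and $\Delta_k^\vee(h^\vee x_\mathbf{i}^\vee(t^\vee))=(h^\vee)^{u_k\omega_{i_k}^\vee}Q_k^\vee(t^\vee)$. Unwinding the definitions of $\varPsi_\sigma$ and $\varPsi_\mathbf{i}$,
\[
    \varPsi_\sigma^*(a_k^\vee) = h^{d_{i_k}u_k\omega_{i_k}}Q_k(t)^{d_{i_k}},\qquad
    \varPsi_\mathbf{i}^*(a_k^\vee) = \varPsi^H(h)^{u_k\omega_{i_k}^\vee}Q_k^\vee(t_1^{d_{i_1}},\dots,t_m^{d_{i_m}}).
\]
The Cartan contributions are identical: combining $W$-equivariance of $\psi\colon \mathfrak{h}\to\mathfrak{h}^*$ with the identity $\psi(\omega_j^\vee) = d_j\omega_j$ yields $\psi(u_k\omega_{i_k}^\vee)=d_{i_k}u_k\omega_{i_k}$, hence $\varPsi^H(h)^{u_k\omega_{i_k}^\vee}=h^{d_{i_k}u_k\omega_{i_k}}$. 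What remains is the tropical identity
\[
    \bigl(Q_k^\vee(t_1^{d_{i_1}},\dots,t_m^{d_{i_m}})\bigr)^t = d_{i_k}\,Q_k^t(\eta),
\]
regarded as an equality of piecewise linear functions of $\eta=(t_1^t,\dots,t_m^t)$.

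Expanding $Q_k$ and $Q_k^\vee$ as sums of positive monomials, the identity amounts to
\[
    \min_{\alpha\in\mathrm{supp}(Q_k^\vee)}\sum_j d_{i_j}\alpha_j\eta_j \,=\, \min_{\beta\in\mathrm{supp}(Q_k)}\sum_j d_{i_k}\beta_j\eta_j
\]
for all $\eta$. The content is therefore a compatibility between the Newton supports of $Q_k$ and $Q_k^\vee$ under the rescalings $\alpha\mapsto(d_{i_j}\alpha_j)_j$ and $\beta\mapsto d_{i_k}\beta$; this reflects how Langlands duality acts on the combinatorial data (matrix-coefficient expansions in the fundamental representations $V(\omega_{i_k})$ and $V^\vee(\omega_{i_k}^\vee)$) underlying the positive Laurent expansions of the generalized minors in factorization coordinates.

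The main obstacle is establishing this combinatorial compatibility in general. A viable route is to exploit the mutation invariance of both sides: by Proposition \ref{Pro:compareCluster}, $\psi_\sigma$ is independent of the choice of seed within $|\sigma(\mathbf{i})|$, and by Proposition \ref{dmovesok}, $\psi_\mathbf{i}$ is independent of the choice of $\mathbf{i}$. Together with the rank-$2$ $d$-moves of \cite{BZ01}, this reduces the verification to a bounded set of explicit rank-$\leq 2$ computations (types $A_1\times A_1$, $A_2$, $B_2$, $G_2$), to be carried out in the spirit of Proposition \ref{dmovesok}; the $G_2$ case is the most substantive, but amenable to the tropical identity $((A+B)^k)^t = (A^k+B^k)^t$ for positive functions $A,B$ and positive integers $k$, which was already exploited there.
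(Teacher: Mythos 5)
Your reduction to comparing the tropicalizations of $\varPsi_\sigma^*(a_k^\vee)$ and $\varPsi_\mathbf{i}^*(a_k^\vee)$ is sound, and the Cartan-part computation $\varPsi^H(h)^{u_k\omega_{i_k}^\vee}=h^{d_{i_k}u_k\omega_{i_k}}$ (using $\psi(\omega_j^\vee)=d_j\omega_j$ and $W$-equivariance of $\psi$) is correct. But the remaining identity $\bigl(Q_k^\vee(t_1^{d_{i_1}},\dots,t_m^{d_{i_m}})\bigr)^t=d_{i_k}\,Q_k^t$ is the entire content of the theorem, and the route you propose does not establish it. Propositions~\ref{Pro:compareCluster} and~\ref{dmovesok} each yield invariance \emph{within} one class of charts (cluster charts, resp.\ factorization charts); they provide no bridge \emph{between} the two classes. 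Moreover $Q_k(t)=\Delta_k(x_\mathbf{i}(t))$ is a globally defined Laurent polynomial in all $m$ factorization parameters, so its Newton polytope does not localize to any rank-$2$ configuration that the $d$-moves of \cite{BZ01} could capture. There is no finite list of rank-$\leqslant 2$ checks to which your two invariances reduce the statement.

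The device you are missing is the Fomin--Zelevinsky twist map $\zeta\colon L^{e,w_0}\to L^{w_0,e}$ of \eqref{twist}, a biregular isomorphism of positive varieties whose effect on generalized minors is given by the explicit monomial formula \cite[Eq.~(1.21)]{FZ}: $\Delta_k\circ\zeta\circ x_{\mathbf{-i}}(t)=\prod_{l\geqslant k^+}t_l^{\langle\alpha_{i_l}^\vee,\,s_{i_{l+1}}\cdots s_{i_k}\omega_{i_k}\rangle}$. Pre-composing with $\zeta$ collapses the Newton polytope of each minor to a single point, so both $\Delta_k^\vee\circ\varPsi_\sigma\circ\zeta$ and $\Delta_k^\vee\circ\zeta^\vee\circ\varPsi_\mathbf{i}$ become monomials; their equality reduces to the bilinear-form identity $d_{i_k}\langle\alpha_{i_l}^\vee,s_{i_{l+1}}\cdots s_{i_k}\omega_{i_k}\rangle=d_{i_l}\langle\alpha_{i_l},s_{i_{l+1}}\cdots s_{i_k}\omega_{i_k}^\vee\rangle$, which is exactly the $W$-equivariance observation you already made, now doing all the work. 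This yields an equality of positive rational functions, strictly stronger than the tropical statement you target. The other half of the paper's argument is to verify (via Lemmas~\ref{lem5.10} and~\ref{lem5.11} and the maps $P_\mathbf{i}$) that $\zeta$ tropically intertwines $\psi_{\mathbf{-i}}$ and $\psi_\mathbf{i}$, which is elementary. Without the twist, or some comparable mechanism that renders the factorization-coordinate expansions of the generalized minors monomial, the combinatorial compatibility at the heart of your outline remains an unproved claim.
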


Before the proof of Theorem \ref{Main}, let us develop some preliminary results. Throughout this section, we use the convention in Remark \ref{aboutsign}. 

For a pair of Weyl group elements $(u,v)$ with $\ell(u)=p$ and $\ell(v)=q$, a double reduced word $\mathbf{i}=(i_1,\ldots,i_{p}, j_1,\ldots,j_{q})$ is {\em separated} if $i_1,\dots,i_{p}\in [-r,-1]$ and $j_1,\dots,j_{q}\in[1,r]$, and if also $\ell(us_{j_{q}})=\ell(u)+1$. For a separated double reduced word $\mathbf{i}$ for $(u,v)$,
define
\[
	\hat{\mathbf{i}}:=(i_1,\ldots,i_{p}, j_1,\ldots,j_{q-1},-j_{q}); \quad \tilde{\mathbf{i}}:=(i_1,\ldots,i_{p},-j_q, j_1,\ldots,j_{q-1}).
\]
Note that both $\hat{\mathbf{i}}$ and $\tilde{\mathbf{i}}$ are double reduced words for $(us_{j_q},vs_{j_q})$.
We then define the following birational map in terms of the toric charts $x_{\mathbf{i}}, x_{\hat{\mathbf{i}}}$:
\[
	P_{\mathbf{i}}\colon  L^{u,v}  \to L^{us_{j_q},vs_{j_q}} \ :\  
	x_{\mathbf{i}}(t_1,\dots, t_n)  \mapsto x_{\hat{\mathbf{i}}}(t_1,\dots,t_n^{-1}).
\]
We denote the analogous map for $L^{\vee;u,v}$ by $P_\mathbf{i}^\vee$.
\begin{Lem}\label{lem5.10}
	Let $\mathbf{i}=(i_1,\ldots,i_{p}, j_1,\ldots,j_{q})$ be a separated double reduced word for $(u,v)$. Then the following diagram commutes.
	\[
	\begin{tikzcd}
		(L^{u,v}, x_\mathbf{i})^t \arrow[rr, "P_{\mathbf{i}}^t"] \arrow[dd, "\psi_\mathbf{i}"] &&(L^{us_{j_q},vs_{j_q}}, x_{\hat{\mathbf{i}}})^t \arrow[rr, "\id^t"] \arrow[dd, "\psi_{\hat{\mathbf{i}}}"] && (L^{us_{j_q},vs_{j_q}}, x_{\tilde{\mathbf{i}}})^t \arrow[dd, "\psi_{\tilde{\mathbf{i}}}"] \\
		\\
		(L^{\vee;u,v}, x^\vee_\mathbf{i})^t \arrow[rr, "(P_{\mathbf{i}}^\vee)^t"]  &&(L^{\vee;us_{j_q},vs_{j_q}}, x_{\hat{\mathbf{i}}})^t \arrow[rr, "(\id^\vee)^t"]  && (L^{\vee;us_{j_q},vs_{j_q}}, x^\vee_{\tilde{\mathbf{i}}})^t
	\end{tikzcd}
	\]
\end{Lem}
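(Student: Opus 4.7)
The plan is to decompose the diagram into its two constituent squares and handle each independently. The left square will be shown to commute strictly at the level of rational maps (so \emph{a fortiori} after tropicalization) by a direct computation in chart coordinates, while the right square will turn out to be a direct instance of Proposition \ref{dmovesok} applied to a different double Bruhat cell.

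For the left square, I would write everything in the chart coordinates $(t_1, \dots, t_n)$ provided by $x_{\mathbf{i}}$ and $x_{\hat{\mathbf{i}}}$. In these coordinates, both $P_{\mathbf{i}}$ and $P_{\mathbf{i}}^\vee$ become the coordinate map $(t_1, \dots, t_n) \mapsto (t_1, \dots, t_{n-1}, t_n^{-1})$ by definition, while $\varPsi_{\mathbf{i}}$ and $\varPsi_{\hat{\mathbf{i}}}$ become coordinate-wise power maps with exponents given by the skew-symmetrizer of the respective seed. Since the last letter of $\mathbf{i}$ is $j_q$ and $d_{-j_q} = d_{j_q}$ by the convention of Remark \ref{aboutsign}, both compositions $\varPsi_{\hat{\mathbf{i}}} \circ P_{\mathbf{i}}$ and $P_{\mathbf{i}}^\vee \circ \varPsi_{\mathbf{i}}$ send $(t_1, \dots, t_n)$ to the same point $(t_1^{d_{i_1}}, \dots, t_{n-1}^{d_{i_{n-1}}}, t_n^{-d_{j_q}})$. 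Thus the left square already commutes at the level of rational maps.

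For the right square, the key observation is that both $\hat{\mathbf{i}}$ and $\tilde{\mathbf{i}}$ are double reduced words for the same pair $(us_{j_q}, vs_{j_q})$: the negative letters of either word, in order, spell out $(i_1, \dots, i_p, -j_q)$, a reduced word for $us_{j_q}$ of length $\ell(u)+1$ (using the separated hypothesis), while the positive letters spell out $(j_1, \dots, j_{q-1})$, a reduced word for $vs_{j_q}$. Consequently the right square is precisely the commutativity statement of Proposition \ref{dmovesok} applied to the cell $L^{us_{j_q}, vs_{j_q}}$ with the pair of double reduced words $\hat{\mathbf{i}}$ and $\tilde{\mathbf{i}}$. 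The main (albeit mild) obstacle is that Proposition \ref{dmovesok} was stated for $G^{u,v}$ rather than $L^{u,v}$; this is handled by noting that its proof proceeds through the explicit d-move identities from \cite{BZ01}, each of which leaves the $H$-factor of $G^{u,v} \cong H \times L^{u,v}$ completely untouched. Hence the $L^{u,v}$ version is obtained by restricting the $G^{u,v}$ diagram to the fiber over $e \in H$, and the argument transfers verbatim to $L^{us_{j_q}, vs_{j_q}}$.
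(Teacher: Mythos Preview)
Your proposal is correct and follows essentially the same approach as the paper's proof: the paper simply states that the left square commutes ``directly from the definition of $P_\mathbf{i}$ and $\psi_{\mathbf{i}}$'' and that the right square ``is just Proposition \ref{dmovesok} for `mixed' moves.'' Your write-up is more explicit (the coordinate computation for the left square, and the remark that Proposition \ref{dmovesok} is stated for $G^{u,v}$ but restricts to $L^{u,v}$), but the underlying argument is identical.
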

\begin{proof}
	The square on left commutes directly from the definition of $P_\mathbf{i}$ and $\psi_{\mathbf{i}}$. The commutativity of the right square is just Proposition \ref{dmovesok} for ``mixed'' moves.
\end{proof}

Now, let $\mathbf{i}=(i_1,\ldots,i_n)$ be a double reduced word for $(e,v)$. For $k\in [0,n]$, let 
\[
\mathbf{i}_k:=(-i_n,\ldots,-i_{k+1},i_1,\ldots,i_{k}).
\]
Note that $\mathbf{i}_k$ and $\tilde{\mathbf{i}}_k= \mathbf{i}_{k-1}$ are both separated. Then $\mathbf{i}_n=\mathbf{i}$ and $\mathbf{i}_0$ is $-\mathbf{i}$, written in the opposite order.

\begin{Lem}\label{lem5.11}
Fix a double reduced word $\mathbf{i}=(i_1,\dots,i_n)$ for $(e,v)$. Let $x\in L^{e,v}$ be in the image of the toric chart $x_\mathbf{i}$. Then
\[
[x \overline{v}]_- [x\overline{v}]_0 = P_{\mathbf{i}_{1}}\circ P_{\mathbf{i}_{2}}\circ\cdots \circ P_{\mathbf{i}_n}(x)\in L^{v^{-1},e}.
\]
\end{Lem}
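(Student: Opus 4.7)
The plan is to establish the stronger statement that $x\overline{v}$ and $f_0 := P_{\mathbf{i}_1}\circ\cdots\circ P_{\mathbf{i}_n}(x)$ differ by right multiplication by $U$: that is, $x\overline{v} = f_0\cdot u$ for some $u\in U$. Since $f_0\in L^{v^{-1},e}\subset U_-H\subset B_-$ and the Gauss projection $[\,\cdot\,]_-[\,\cdot\,]_0$ is invariant under right multiplication by $U$, this implies immediately that $[x\overline{v}]_-[x\overline{v}]_0 = f_0$.

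The workhorse is the $\SL_2$-identity
\[
 x_i(t)\,\overline{s}_i = x_{-i}(t^{-1})\,x_i(-t^{-1}),
\]
obtained by Gauss-decomposing inside $\phi_i(\SL_2)$; equivalently $x_{-i}(s)=x_i(s^{-1})\,\overline{s}_i\,x_i(s)$. Using this, I would first derive a local recurrence relating consecutive $f_k$'s. Writing $f_k = f'_k\cdot x_{i_k}(\tau_k)$, where $\tau_k$ is the last coordinate of $f_k$ in the chart $\mathbf{i}_k$, the definition of $P_{\mathbf{i}_k}$ gives $f_{k-1} = f'_k\cdot x_{-i_k}(\tau_k^{-1})$; substituting the $\SL_2$-identity yields
\[
 f_{k-1} = f_k\,\overline{s}_{i_k}\,x_{i_k}(\tau_k^{-1}).
\]
Iterating this from $k=n$ down to $k=1$ produces the closed form
\[
 f_0 = x\cdot\overline{s}_{i_n}\,u_n\,\overline{s}_{i_{n-1}}\,u_{n-1}\cdots\overline{s}_{i_1}\,u_1,\qquad u_k := x_{i_k}(\tau_k^{-1})\in U.
\]

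The main obstacle is then to rearrange this word into the form $x\,\overline{v}\cdot u$ with a single residual $u\in U$. I would commute each $u_k$ rightwards past the subsequent $\overline{s}$-factors using the observation that for $i\neq j$ the conjugate $\overline{s}_j^{-1}\,x_i(c)\,\overline{s}_j$ lies in the root subgroup for the positive root $s_j(\alpha_i)=\alpha_i-a_{ij}\alpha_j$, hence stays in $U$. Reducedness of $(i_1,\ldots,i_n)$ guarantees that all roots produced in these intermediate commutations remain positive, so no $U_-$-factor is ever introduced; the $\overline{s}$'s then consolidate, by Coxeter relations among the lifts, into $\overline{v}$, and the surviving $x_j(\cdot)$'s combine into a single $u\in U$. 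The delicate point is the combinatorial bookkeeping needed to check that exactly $\overline{v}$ (and not some other Weyl representative) emerges from the rearrangement --- which is particularly transparent when $v=w_0$, where the well-definedness of the lift forces $\overline{v^{-1}}=\overline{v}$, and which in general relies on the positivity of each $s_{i_1}\cdots s_{i_{k-1}}(\alpha_{i_k})$ furnished by reducedness. This step is typically carried out by a nested induction on $n$.
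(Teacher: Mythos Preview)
Your proposal is correct and follows essentially the same route as the paper: both arguments rest on the $\SL_2$ identity $x_i(t)\,\overline{s}_i = x_{-i}(t^{-1})\,x_i(-t^{-1})$ together with the positivity of $s_{i_1}\cdots s_{i_{k-1}}(\alpha_{i_k})$ guaranteed by reducedness. The paper's organization is marginally cleaner in that it conjugates each residual $x_{i_k}(-t^{-1})$ past the entire remaining Weyl lift $\overline{v}_k$ in one step (rather than past each $\overline{s}_j$ individually), and your concern about which Weyl representative emerges is unnecessary since the $\overline{s}_i$'s satisfy the Coxeter relations.
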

\begin{proof}
	For $k\in[0,n]$, write $v_k=s_{i_{n-k}}\cdots s_{i_2} s_{i_1}$. For $i>0$, it is easy to check that $x_{i}(t)\overline{s_{i}} = x_{-i}(t^{-1}) x_{i}(-t^{-1})$, then
	\begin{align*}
		x_{\mathbf{i}_n}(t_1,\ldots,t_n)\overline{v}_0& = x_{i_1}(t_1) \cdots x_{i_n}(t_n) \overline{s_{i_n}} \cdot \overline{v}_1\\
		&=x_{i_1}(t_1) \cdots x_{i_{n-1}}(t_{n-1}) x_{-i_n}(t_n^{-1}) x_{i_n}(-t_n^{-1}) \cdot \overline{v}_1\\
		&=\left(P_{\mathbf{i}_n}(x_{\mathbf{i}_n}(t_1,\ldots,t_n))\overline{v}_1\right)\cdot \left(\overline{v}_1^{-1}x_{i_n}(-t_n^{-1}) \overline{v}_1\right).
	\end{align*}
	In the last line, note that $\overline{v}_1^{-1}x_{i_n}(-t_n^{-1}) \overline{v}_1\in U$; this follows from well-known results on the Weyl group, as in Section 10.2 of \cite{Humph}. By writing $P_{\mathbf{i}_n}(x_{\mathbf{i}_n}(t_1,\dots,t_n)) = x_{\mathbf{i}_{m-1}}(t'_1,\dots,t_n')$, we can repeat the argument from above:
	\begin{align*}
	x_{\mathbf{i}_{n-1}}(t_1',\ldots,t_n')\overline{v}_1 & = x_{-i_n}(t_1') x_{i_1}(t_2')\cdots x_{i_{n-1}}(t_n') \overline{s}_{i_{n-1}} \overline{v}_2 \\
	& = (P_{\mathbf{i}_{n-1}}(x_{\mathbf{i}_{n-1}}(t_1',\dots,t_n')) \overline{v}_2)\cdot (\overline{v}_2^{-1} x_{i_{n-1}}(-{t_n'}^{-1}) \overline{v}_2).
	\end{align*}
	As before, $\overline{v}_2^{-1} x_{i_{n-1}}(-{t_n'}^{-1}) \overline{v}_2\in U$. Repeating the argument $n$ times and taking $[\cdot]_- [\cdot]_0$ on both sides, we get the desired formula.
\end{proof}

Now, fix a double reduced word $\mathbf{i}$ for $(w_0,e)$ as in the statement of Theorem \ref{Main}. Consider the biregular \emph{twist map} from \cite{BZ01}
\begin{equation}\label{twist}
	\zeta\colon  L^{e,w_0} \to L^{w_0,e} \ :\ x\mapsto ([x\overline{w_0}]_{-} [x\overline{w_0}]_{0})^\iota,
\end{equation}
which is a biregular map, and also an isomorphism of the positive varieties $(L^{e,w_0},x_{\mathbf{-i}})$ and $(L^{w_0,e}, x_{\mathbf{i}})$.

\begin{proof}[Proof of Theorem \ref{Main}]
	Consider the following diagram.
	\begin{equation}\label{goaldiag}
    	\begin{tikzcd}
			(L^{e,w_0}, x_\mathbf{-i})^t \arrow[rr, "\zeta^t"]  \arrow[dd, "\psi_\mathbf{-i}"] &&(L^{w_0,e}, x_{\mathbf{i}})^t \arrow[rr, "\id^t"]  \arrow[dd, "\psi_{\mathbf{i}}"] && (L^{w_0,e}, \sigma)^t \arrow[dd, "\psi_{\sigma}"] \\
			\\
			(L^{\vee;e,w_0}, x_\mathbf{-i}^\vee)^t \arrow[rr, "(\zeta^\vee)^t"]   &&(L^{\vee;w_0,e}, x_{\mathbf{i}}^\vee)^t \arrow[rr, "{\id^\vee}^t"]  && (L^{\vee;w_0,e}, \sigma)^t
		\end{tikzcd}
	\end{equation}
	We must show that the square on the right commutes. It follows from Lemmas \ref{lem5.10} and \ref{lem5.11}, as well as the definition of $(\cdot)^\iota$, that the square on the left commutes. It then suffices to show that the outer square commutes. We will actually prove a stronger statement, which is that the following square commutes.
	\begin{equation}\label{outersquare}
		\begin{tikzcd}
			L^{e,w_0} \arrow[rr, "\zeta"] \arrow[dd, "\varPsi_\mathbf{-i}"]  && L^{w_0,e} \arrow[dd, "\varPsi_\sigma"] \\
			\\
			L^{\vee;e,w_0} \arrow[rr, "{\zeta^\vee}"]  && L^{\vee; w_0,e}
		\end{tikzcd}
	\end{equation}
	Let $x=x_{\mathbf{-i}}(t_1,\dots, t_m)\in L^{e,w_0}$. By \cite[Eq (1.21)]{FZ}, we are given formulas for changes of variables between the factorization parameters $t_k$'s and the twisted minors $\Delta_k\circ \zeta$:
	\[
		\Delta_k\circ \zeta(x_{\mathbf{-i}}(t_1,\dots, t_m))=\Delta_{s_{i_1}\cdots s_{i_{k}}\omega_{i_k},\omega_{i_k}}\circ \zeta (x_{\mathbf{-i}}(t_1,\dots, t_m))
		=\prod_{l\geqslant k^+ }t_l^{\langle \alpha_{i_l}^\vee, s_{i_{l+1}}\cdots s_{i_k}\omega_{i_k}\rangle}.
	\]
	Note that the twist map $\zeta$ differs by a transpose $(\cdot)^T$ from the one in \cite[Eq (1.21)]{FZ}. From these we derive
	\begin{align}
		\label{firsttwist} \Delta^\vee_k\circ \varPsi_\sigma\circ \zeta(x) & =\left(\prod_{l\geqslant k^+ }t_l^{\langle \alpha_{i_l}^\vee, s_{i_{l+1}}\cdots s_{i_k}\omega_{i_k}\rangle}\right)^{d_{i_k}} \\
		\label{secondtwist} \Delta_k^\vee\circ \zeta^\vee\circ\varPsi_\mathbf{i}(x)& =\prod_{l\geqslant k^+ }t_l^{d_{i_l}\langle \alpha_{i_l}, s_{i_{l+1}}\cdots s_{i_k}\omega_{i_k}^\vee\rangle}.
	\end{align}
	But $d_{i_k}\langle \alpha_{i_l}^\vee, s_{i_l}\cdots s_{i_k}\omega_{i_k}\rangle=( \alpha_{i_l}^\vee, s_{i_{l+1}}\cdots s_{i_k}\omega_{i_k}^\vee)= d_{i_l}\langle \alpha_{i_l}, s_{i_l}\cdots s_{i_k}\omega_{i_k}^\vee\rangle$. So the expressions \eqref{firsttwist} and \eqref{secondtwist} are equal. Since this is true for all minors $\Delta_k$, we have found that \eqref{goaldiag} commutes.	\end{proof}

As a consequence of Proposition \ref{Pro:compareCluster}, Proposition \ref{dmovesok} and Theorem \ref{Main}, we immediately obtain the following. Recall that, for any double reduced words $\mathbf{i}, \mathbf{i'}$ for $(w_0,e)$, the toric charts $x_\mathbf{i},x_\mathbf{i'}, \sigma(\mathbf{i}),\sigma(\mathbf{i'})$ on $G^{w_0,e}$ are all positively equivalent.

\begin{Cor}\label{allcomparisonthesame}
	Let $\mathbf{i}, \mathbf{i'}$ be double reduced words for $(w_0,e)$, and let $\sigma\in |\sigma(\mathbf{i})|$ and $\sigma'\in |\sigma(\mathbf{i'})|$. Consider the (rational) comparison maps
	\[
	\varPsi_{\mathbf{i}}, \varPsi_{\mathbf{i'}}, \varPsi_{\sigma}, \varPsi_{\sigma'}\colon G^{w_0,e} \to G^{\vee;w_0,e}.
	\]
	For any toric charts $\theta\in [x_\mathbf{i}]$ and $\theta^\vee\in [x^\vee_{\mathbf{i}}]$ on $G^{w_0,e}$ and  $G^{\vee;w_0,e}$, respectively, the tropicalized maps
	\[
	\varPsi_{\mathbf{i}}^t,\varPsi_{\mathbf{i'}}^t, \varPsi_{\sigma}^t, \varPsi_{\sigma'}^t\colon (G^{w_0,e},\theta)^t \to (G^{\vee;w_0,e},\theta^\vee)^t
	\]
	are all equal.
\end{Cor}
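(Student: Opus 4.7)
The strategy is to combine three commutative diagrams already established in the paper and observe that each one collapses, when both source and target are re-expressed in a common pair of tropical charts, to an equality of two of the four tropicalized maps in question.

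The general principle is this: if $\phi:X\to Y$ is a positive rational map of positive varieties and $(\theta_i,\eta_i)$, $i=1,2$, are two pairs of toric charts in their respective positive equivalence classes, then
\[
\phi^t\big|_{\theta_2,\eta_2} \;=\; (\eta_2^{-1}\circ\eta_1)^t \circ \phi^t\big|_{\theta_1,\eta_1} \circ (\theta_1^{-1}\circ\theta_2)^t,
\]
the outer factors being the piecewise $\mathbb{Z}$-linear tropical chart-change bijections. Theorem \ref{Main} says exactly that $\psi_{\sigma(\mathbf{i})}\circ \id^t=(\id^\vee)^t\circ \psi_\mathbf{i}$, where $\id^t$ and $(\id^\vee)^t$ are precisely these chart-change maps between $x_\mathbf{i},\sigma(\mathbf{i})$ and $x^\vee_\mathbf{i},\sigma^\vee(\mathbf{i})$ respectively. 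Substituting this relation into the general formula above, and retracting both $\varPsi_\mathbf{i}^t$ and $\varPsi_{\sigma(\mathbf{i})}^t$ to any common pair $(\theta,\theta^\vee)$, causes the intermediate chart-change factors to telescope, leaving $\varPsi_\mathbf{i}^t=\varPsi_{\sigma(\mathbf{i})}^t$ as maps $(G^{w_0,e},\theta)^t\to(G^{\vee;w_0,e},\theta^\vee)^t$.

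The identical telescoping argument applied to Proposition \ref{Pro:compareCluster} yields $\varPsi_{\sigma(\mathbf{i})}^t=\varPsi_\sigma^t$ for any $\sigma\in|\sigma(\mathbf{i})|$, and applied to Proposition \ref{dmovesok} yields $\varPsi_\mathbf{i}^t=\varPsi_{\mathbf{i'}}^t$. Chaining these three equalities gives
\[
\varPsi_\sigma^t \;=\; \varPsi_{\sigma(\mathbf{i})}^t \;=\; \varPsi_\mathbf{i}^t \;=\; \varPsi_{\mathbf{i'}}^t \;=\; \varPsi_{\sigma(\mathbf{i'})}^t \;=\; \varPsi_{\sigma'}^t
\]
as maps $(G^{w_0,e},\theta)^t\to(G^{\vee;w_0,e},\theta^\vee)^t$, which is the claim. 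I do not anticipate any serious obstacle: the entire content of the corollary is already packaged in the three cited results, and the remaining work amounts to the bookkeeping above, together with the observation that the positive equivalence classes $[x_\mathbf{i}]$ and $[x_{\mathbf{i'}}]$ (respectively their duals) coincide on $G^{w_0,e}$ (respectively $G^{\vee;w_0,e}$), so that ``the same pair of charts $(\theta,\theta^\vee)$'' is a meaningful common target for all four tropicalizations.
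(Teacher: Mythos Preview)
Your proposal is correct and follows exactly the approach the paper takes: the corollary is stated there as an immediate consequence of Proposition~\ref{Pro:compareCluster}, Proposition~\ref{dmovesok}, and Theorem~\ref{Main}, with no further argument given. Your write-up simply makes explicit the chart-change bookkeeping that the paper leaves implicit.
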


\subsection{The BK Potentials and the BK Cones}

In this section we recall the Berenstein-Kazhdan (BK) potential, and the BK cone, as described in \cite{BKII}. Let $G$ be a semisimple algebraic group as before. Fix a double reduced word $\mathbf{i}$ for $(w_0,e)$ as in the previous sections.

\begin{Def}
  On the double Bruhat cell $G^{w_0,e}$, the BK potential $\Phi_{BK}$ is the following function: 
  \begin{equation}\label{BKp}
    \Phi_{BK}=\sum_{i\in \bm{I}}\frac{\Delta_{w_0\omega_i,s_i\omega_i}+\Delta_{w_0s_i\omega_i,\omega_i}}{\Delta_{w_0\omega_i,\omega_i}}\in \mathbb{Q}[G^{w_0,e}]. 
  \end{equation}
\end{Def}

\begin{Rmk} If $G$ is not simply connected, generalized minors of the form $\Delta_{u\omega_i,v\omega_i}$ are not in general functions on $G$. However, $\Phi_{BK}$ is: Suppose $\widehat{G}$ is the universal cover of $G$ with $\mathcal{p}\colon \widehat{G}\to G$ the covering map. Then the right-hand side of \eqref{BKp} is well defined on $\widehat{G}$ and is invariant under the action of any element belonging to $\ker \mathcal{p}$. Thus $\Phi_{BK}$ descends to a function on $G$.
\end{Rmk}

\begin{Rmk}
    In \cite{BKII} the authors give a more conceptual definition of $\Phi_{BK}$, using the notion of $\chi$-linear functions. Equivalence with our definition is shown by \cite[Corollary 1.25]{BKII}.
\end{Rmk}

Let $F=\sum F_i$ be the sum of the negative root vectors associated with the simple roots. Let $i^*$ be the index of the simple root $-w_0\alpha_i$, then $F_{i*}$ is the negative root vector corresponding to the root $\alpha_{i^*}:=-w_0\alpha_i$. Let $\rho=\frac{1}{2}\sum_{\alpha>0} \alpha=\sum \omega_i$ be the Weyl vector. The following proposition gives the expression of the BK potential used in \cite{ABHL}:
\begin{Pro}
\label{rewriteBK}
  The BK potential on $G$ can be rewritten as:
  \[
    \Phi_{BK}=\sum_{i\in \bm{I}}\frac{F_{i^*}\cdot\Delta_{w_0\omega_i,\omega_i}+\Delta_{w_0\omega_i,\omega_i}\cdot F_{i}}{\Delta_{w_0\omega_i,\omega_i}}=\frac{F\cdot \Delta_{w_0\rho,\rho}+\Delta_{w_0\rho,\rho}\cdot F}{\Delta_{w_0\rho,\rho}} .
  \]
\end{Pro}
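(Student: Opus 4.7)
The plan is to establish both equalities by interpreting the generalized minors appearing in the numerators as directional derivatives of $\Delta_{w_0\omega_i,\omega_i}$ (respectively $\Delta_{w_0\rho,\rho}$), and then comparing via the Leibniz rule. Throughout, I work with the standard actions $(X\cdot f)(g) = \tfrac{d}{dt}\big|_{0} f(\exp(tX)g)$ and $(f\cdot X)(g) = \tfrac{d}{dt}\big|_{0} f(g\exp(tX))$, and pass to the universal cover $\widehat{G}$ if needed so that the individual generalized minors make sense as functions.

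\textbf{Step 1 (derivatives of fundamental minors).} I claim that for each $i,j\in\bm{I}$,
\[
    F_j\cdot \Delta_{w_0\omega_i,\omega_i} \;=\; \delta_{j,i^*}\,\Delta_{w_0 s_i\omega_i,\omega_i}, \qquad
    \Delta_{w_0\omega_i,\omega_i}\cdot F_j \;=\; \delta_{j,i}\,\Delta_{w_0\omega_i,s_i\omega_i}.
\]
The argument is representation-theoretic. Writing $\Delta_{w_0\omega_i,\omega_i}(g) = \langle v^*_{w_0\omega_i},\,g\,v_{\omega_i}\rangle$ as a matrix coefficient in $V_{\omega_i}$, one computes $(F_j\cdot\Delta_{w_0\omega_i,\omega_i})(g) = \langle v^*_{w_0\omega_i},\,F_j\,g\,v_{\omega_i}\rangle$. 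Dualizing moves $F_j$ onto $v^*_{w_0\omega_i}$, which is a \emph{highest} weight vector of $V_{\omega_i}^*\cong V_{\omega_{i^*}}$ (since its weight is $-w_0\omega_i=\omega_{i^*}$). By the general theory of highest weight modules, $F_j$ annihilates $v^*_{w_0\omega_i}$ unless $\langle \omega_{i^*},\alpha_j^\vee\rangle\ne 0$, i.e.\ unless $j=i^*$. When $j=i^*$, the resulting weight vector has weight $w_0\omega_i+\alpha_{i^*} = w_0(\omega_i-\alpha_i) = w_0 s_i\omega_i$, and with the standard normalization of weight vectors the matrix coefficient equals $\Delta_{w_0 s_i\omega_i,\omega_i}$. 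The second identity is proved symmetrically, now using that $v_{\omega_i}$ is the highest weight vector of $V_{\omega_i}$ itself. In particular this already proves the first equality of the proposition, since $F=\sum_j F_j$ and thus $F\cdot\Delta_{w_0\omega_i,\omega_i}=F_{i^*}\cdot\Delta_{w_0\omega_i,\omega_i}$, and likewise $\Delta_{w_0\omega_i,\omega_i}\cdot F = \Delta_{w_0\omega_i,\omega_i}\cdot F_i$.

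\textbf{Step 2 (product formula for $\Delta_{w_0\rho,\rho}$).} Next I establish
\[
    \Delta_{w_0\rho,\rho} \;=\; \prod_{i\in\bm{I}} \Delta_{w_0\omega_i,\omega_i}.
\]
This follows from the embedding $V_\rho\hookrightarrow V_{\omega_1}\otimes\cdots\otimes V_{\omega_r}$ sending $v_\rho\mapsto v_{\omega_1}\otimes\cdots\otimes v_{\omega_r}$: the image of the (one-dimensional) weight space $V_\rho[w_0\rho]$ is proportional to $v_{w_0\omega_1}\otimes\cdots\otimes v_{w_0\omega_r}$, the unique weight $w_0\rho$ vector in the tensor product up to scalar. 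Hence the matrix coefficient $\Delta_{w_0\rho,\rho}(g)$ factors as the product of the fundamental matrix coefficients $\Delta_{w_0\omega_i,\omega_i}(g)$.

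\textbf{Step 3 (Leibniz and conclusion).} Because both $X\mapsto X\cdot(-)$ and $(-)\cdot X$ act as derivations on $\mathbb{Q}[G]$, Step 2 gives
\[
    F\cdot \Delta_{w_0\rho,\rho} \;=\; \sum_i \bigl(F\cdot\Delta_{w_0\omega_i,\omega_i}\bigr)\prod_{j\ne i}\Delta_{w_0\omega_j,\omega_j},
\]
and similarly on the right. Dividing by $\Delta_{w_0\rho,\rho}=\prod_j\Delta_{w_0\omega_j,\omega_j}$ and using the vanishing observation of Step 1 (which collapses $F\cdot\Delta_{w_0\omega_i,\omega_i}$ to $F_{i^*}\cdot\Delta_{w_0\omega_i,\omega_i}=\Delta_{w_0 s_i\omega_i,\omega_i}$, and analogously on the right to $\Delta_{w_0\omega_i,s_i\omega_i}$) recovers exactly the two summands of $\Phi_{BK}$.

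\textbf{Main obstacle.} The only nontrivial input is the identification in Step 1, which requires keeping the left/right Lie algebra action conventions straight and normalizing the weight vectors $v_{u\omega_i}$ consistently so that the constants $1$ in the claimed identities are correct rather than undetermined scalars. A convenient way to pin down the normalization is to verify the $\SL_2$ case directly (where $\Delta_{s\omega,\omega}$ and $\Delta_{\omega,s\omega}$ are just matrix entries), and then reduce the general case to $\SL_2$ via the homomorphisms $\phi_i\colon\SL_2\to G$ from Section~\ref{potentialsection}.
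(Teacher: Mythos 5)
Your proposal is correct and follows the same overall structure as the paper's proof: establish the derivative identities $F_{i^*}\cdot\Delta_{w_0\omega_i,\omega_i}=\Delta_{w_0s_i\omega_i,\omega_i}$ and $\Delta_{w_0\omega_i,\omega_i}\cdot F_i=\Delta_{w_0\omega_i,s_i\omega_i}$ (with the vanishing $F_{j^*}\cdot\Delta_{w_0\omega_i,\omega_i}=0$ and $\Delta_{w_0\omega_i,\omega_i}\cdot F_j=0$ for $j\neq i$), prove the factorization $\Delta_{w_0\rho,\rho}=\prod_i\Delta_{w_0\omega_i,\omega_i}$, and combine via Leibniz. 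The one genuine difference is in Step 2: you derive the product formula from the embedding $V_\rho\hookrightarrow V_{\omega_1}\otimes\cdots\otimes V_{\omega_r}$, whereas the paper gets it directly and more elementarily from the definition of principal minors, via
\[
\Delta_{w_0\rho,\rho}(g)=\Delta_\rho(\overline{w_0}^{-1}g)=\left([\overline{w_0}^{-1}g]_0\right)^\rho=\prod_{i}\left([\overline{w_0}^{-1}g]_0\right)^{\omega_i}=\prod_{i}\Delta_{w_0\omega_i,\omega_i}(g),
\]
using only $\rho=\sum_i\omega_i$ and the multiplicativity of $\mu\mapsto a^{\mu}$; this avoids any representation-theoretic input. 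Your Step 1 supplies a conceptual proof of the derivative identities that the paper simply asserts, but as you yourself flag in the ``Main obstacle'' paragraph, the normalization (and sign, given the choice of left/right derivative convention) is not actually pinned down in the writeup, only reduced in principle to an $\SL_2$ check — so that step is sketched rather than completed.
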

\begin{proof}
  Since $\Delta_{w_0\omega_i,\omega_i}\cdot F_i=\Delta_{w_0\omega_i,s_i\omega_i}$ for the right action, and $F_{i^*}\cdot\Delta_{w_0\omega_i,\omega_i}=\Delta_{w_0s_i\omega_i,\omega_i}$ for the left action, we get the first equality. To show the second equality, one uses:
  \[
    \Delta_{w_0\rho,\rho}(g)=\Delta_{\rho}(\overline{w_0}^{-1}g)=([\overline{w_0}^{-1}g]_0)^{\rho}=\prod_{i\in\bm{I}}([\overline{w_0}^{-1}g]_0)^{\omega_i}=\prod_{i\in\bm{I}}\Delta_{w_0\omega_i,\omega_i}(g),
  \]
  as well as $\Delta_{w_0\omega_i,\omega_i}\cdot F_j=0\ (j\neq i)$ for the right action, and $F_{j^*}\cdot\Delta_{w_0\omega_i,\omega_i}=0\ (j^* \neq i^*)$ for the left action.
\end{proof}

\begin{Pro}
  For $(h,z)\in H\times L^{w_0,e}$, the BK potential has the form:
  \begin{equation}\label{BKonL}
    \Phi_{BK}(hz)=\sum_i \left( \Delta_{w_0\omega_i,s_i\omega_i}(z)+h^{-w_0\alpha_i}\Delta_{w_0s_i\omega_i,\omega_i}(z)\right).
  \end{equation}
\end{Pro}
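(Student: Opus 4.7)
The plan is to apply the elementary transformation law (\ref{BonusIdentity}) for generalized minors under torus multiplication, together with the normalization $\Delta_{w_0\omega_i,\omega_i}(z)=1$ on $L^{w_0,e}$ from Proposition \ref{charaofz}, directly to the defining expression (\ref{BKp}). This reduces the claim to a bookkeeping of weights, with the main (mild) step being to identify the exponent of $h$ in front of $\Delta_{w_0s_i\omega_i,\omega_i}(z)$ as $-w_0\alpha_i$.

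Concretely, since $h\in H$ and $z\in L^{w_0,e}\subset G$, applying \eqref{BonusIdentity} with $a_1=h$ and $a_2=e$ to each of the three minors appearing in $\Phi_{BK}$ gives
\[
    \Delta_{w_0\omega_i,\omega_i}(hz)=h^{w_0\omega_i}\Delta_{w_0\omega_i,\omega_i}(z),\quad
    \Delta_{w_0\omega_i,s_i\omega_i}(hz)=h^{w_0\omega_i}\Delta_{w_0\omega_i,s_i\omega_i}(z),
\]
\[
    \Delta_{w_0s_i\omega_i,\omega_i}(hz)=h^{w_0s_i\omega_i}\Delta_{w_0s_i\omega_i,\omega_i}(z).
\]
By Proposition \ref{charaofz}, $\Delta_{w_0\omega_i,\omega_i}(z)=1$ on $\widehat{L}^{w_0,e}$, hence on $L^{w_0,e}$ under the identification $\mathcal{p}$. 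Thus $\Delta_{w_0\omega_i,\omega_i}(hz)=h^{w_0\omega_i}$, so each summand of $\Phi_{BK}(hz)$ reads
\[
    \frac{h^{w_0\omega_i}\Delta_{w_0\omega_i,s_i\omega_i}(z)+h^{w_0s_i\omega_i}\Delta_{w_0s_i\omega_i,\omega_i}(z)}{h^{w_0\omega_i}}=\Delta_{w_0\omega_i,s_i\omega_i}(z)+h^{w_0(s_i\omega_i-\omega_i)}\Delta_{w_0s_i\omega_i,\omega_i}(z).
\]

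To finish, I would invoke the standard identity $s_i\omega_i=\omega_i-\alpha_i$ (immediate from $\langle \omega_i,\alpha_j^\vee\rangle=\delta_{ij}$), so that $w_0(s_i\omega_i-\omega_i)=-w_0\alpha_i$, and summing over $i$ yields the claimed formula \eqref{BKonL}. No real obstacle is expected; the only place one must be careful is to note that although individual minors $\Delta_{w_0\omega_i,\omega_i}$ may only be defined on the universal cover when $G$ is not simply connected, the ratios in $\Phi_{BK}$ descend to $G$ (as recorded in the remark following the definition of $\Phi_{BK}$), so the computation is legitimate on $H\times L^{w_0,e}$ itself.
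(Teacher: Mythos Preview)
Your proof is correct and follows essentially the same approach as the paper: apply the transformation law \eqref{BonusIdentity} to each minor, use $\Delta_{w_0\omega_i,\omega_i}(z)=1$ on $L^{w_0,e}$, and then identify the exponent via $s_i\omega_i-\omega_i=-\alpha_i$. The only cosmetic difference is that the paper dispatches the non-simply-connected issue by reducing to the simply connected case at the outset, whereas you invoke the descent of the ratios; both are fine.
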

\begin{proof}
  We only need to consider the case when $G$ is simply connected. By \eqref{BonusIdentity}, we have
  \[
    \Delta_{w_0\omega_i,s_i\omega_i}(hz)=h^{w_0\omega_i}\Delta_{w_0\omega_i,s_i\omega_i}(z), \quad \Delta_{w_0s_i\omega_i,\omega_i}(hz)=h^{w_0s_i\omega_i}\Delta_{w_0s_i\omega_i,\omega_i}(z),
  \]
  \[
    \Delta_{w_0\omega_i,\omega_i}(hz)=h^{w_0\omega_i}\Delta_{w_0\omega_i,\omega_i}(z).
  \]
  Since $h^{s_i\omega_i-\omega_i}=h^{-\alpha_i}$ and $\Delta_{w_0\omega_i,\omega_i}(z)=1$ for $z\in L^{w_0,e}$, we get the desired form. 
\end{proof}

Note that the potential $\Phi_{BK}$ is a positive function on the positive variety $(G^{w_0,e},[x_{\mathbf{i}}])$. On each positive chart $\theta\in [x_{\mathbf{i}}]$, define the \emph{BK cone} of $G$ as:
\[
  \mathcal{C}_{\theta}^G:=(G^{w_0,e},\theta,\Phi_{BK})^t = \{\zeta \in (G^{w_0,e},\theta)^t \mid \Phi_{BK}^t(\zeta)\geqslant 0\}.   
\]
Let us denote
\[
	\mathcal{L}_{\mathbf{i}}:=(G^{w_0,e},x_{\mathbf{i}})^t \cong X_*(H)\times \mathbb{Z}^m
\]
where the isomorphism to the integer lattice comes from the standard coordinates on the (split) toric chart. Let $\{e_i\}_m$ be the standard basis of $\mathbb{Z}^m$ and $\xi=\sum \xi_i e_i\in \mathbb{Z}^m$. Then we abbreviate
\[
  \mathcal{C}_{\mathbf{i}}^{G}:=\mathcal{C}_{x_\mathbf{i}}^G=\{(\lambda^\vee,\xi)\in \mathcal{L}_\mathbf{i} \mid \Phi_{BK}^t(\lambda^\vee,\xi)\geqslant 0\}.   
\]
Let $\mathcal{L}_\mathbf{i}(\mathbb{R}) = \mathcal{L}_\mathbf{i}\otimes \mathbb{R}$ be the real points of $\mathcal{L}_\mathbf{i}$, and let $\mathcal{C}_{\mathbf{i}}^G(\mathbb{R})\subset \mathcal{L}_\mathbf{i}(\mathbb{R})$ be the real cone cut out by the same inequalities as $\mathcal{C}_\mathbf{i}^G$. One can show that 
\begin{equation}\label{1-connectedforall}
  \mathcal{C}_{\mathbf{i}}^G(\mathbb{R})=\mathcal{C}_{\mathbf{i}}^{\widehat{G}}(\mathbb{R}) \subset \mathfrak{h}\times \mathbb{R}^m \ \text{~and~}\ \mathcal{C}_{\mathbf{i}}^G=\mathcal{C}_{\mathbf{i}}^{\widehat{G}}(\mathbb{R})\cap \mathcal{L}_\mathbf{i}.
\end{equation}
Similarly, define $\mathcal{L}^\vee_\mathbf{i}, \mathcal{C}_{\mathbf{i}}^{G^\vee}$, etc. Maps between the integral points of tropical varieties extend to maps of their real points.

\begin{Rmk}
  Here we consider the non-strict BK cone, rather the strict one as in \cite{ABHL}.
\end{Rmk}

The map $\psi_{\mathbf{i}}\colon \mathcal{L}_\mathbf{i}\to \mathcal{L}^\vee_\mathbf{i}$ in Proposition \ref{dmovesok} allows us to compare the potential cones for $G$ and $G^\vee$. This comparison will be discussed in detail in Theorem \ref{compare}. Let us first give an example:
\begin{Ex}
  Let $G=\SL_2$ and $H$ be the subgroup of diagonal matrices as in Example \ref{Ex:SL2}.  We have the following factorization and potential:
  \[
    x=
    \begin{bmatrix} 
      a & 0 \\
      0 & a^{-1} 
    \end{bmatrix}
    \begin{bmatrix}
      t^{-1} & 0 \\
      1 & t
    \end{bmatrix}=
    \begin{bmatrix}
      at^{-1} & 0 \\
      a^{-1}  & a^{-1}t
    \end{bmatrix};\quad
    \Phi_{BK}=t+\frac{a^2}{t}.
  \]
  Recall $X_*(H)=\mathbb{Z}\alpha^\vee$. Then the $BK$ cone is cut out by the following inequalities:
  \[
    \min\{\langle e_1^*,\xi_1 e_1\rangle,\langle e_1^*,-\xi_1 e_1\rangle+\langle x\alpha^\vee,\alpha\rangle \}\geqslant 0,
  \]
  where  $e_1^*$ is the dual of $e_1$. In other words, 
  \[
    \mathcal{C}_{\mathbf{i}}^{\SL_2}=\{(x\alpha^\vee,\xi_1e_1)\in X_*(H)\times \mathbb{Z}\mid 2x\geqslant \xi_1\geqslant 0\}.
  \]
  Note $X^*(H)=\mathbb{Z}\omega$. Now, for $G^\vee=\PSL_2$, the $BK$ cone is given by the inequalities:
  \[
    \min\{\langle e_1^*,\xi_1 e_1\rangle, \langle e_1^*,-\xi_1 e_1\rangle+\langle x\omega,\alpha^\vee\rangle \}\geqslant 0.
  \]
  Therefore,
  \[
    \mathcal{C}_{\mathbf{i}}^{\PSL_2}=\{(x\omega,\xi_1e_1)\in X^*(H)\times \mathbb{Z}\mid x\geqslant \xi_1\geqslant0\}.
  \]
  The lattice cones $\mathcal{C}_{\mathbf{i}}^{\SL_2}$ and $\mathcal{C}_{\mathbf{i}}^{\PSL_2}$ are depicted in Figure \ref{fig:latticeCones}.

  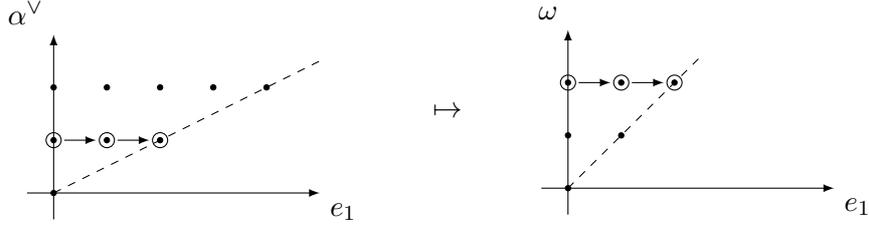
\begin{figure}[htb]
  \[
    \begin{tikzpicture}
      [dot/.style={circle,draw=black, fill,inner sep=0.7pt}, scale=0.7, baseline={([yshift=-.5ex]current bounding box.center)}]

    \foreach \x in {0,...,4}{
      \node[dot] at (\x,2){ };
      }
      \foreach \x in {0,...,2}{
      \node[dot] at (\x,1){ };
      \node[circle, draw=black, inner sep=2pt] at (\x,1){ };
      }
      \node[dot] at (0,0){};

      \draw[->,-latex] (0.2,1) -- (0.8,1);
      \draw[->,-latex] (1.2,1) -- (1.8,1);

      \draw[dashed] (0,0) -- (5,2.5);
    \draw[->,-latex] (0,-0.5) -- (0,3) node[anchor=south east] {$\alpha^\vee$};
    \draw[->,-latex] (-0.5,0) -- (5,0) node[anchor=north west] {$e_1$};
    \end{tikzpicture}\quad \quad \mapsto \quad \quad
    \begin{tikzpicture}
      [dot/.style={circle,draw=black, fill,inner sep=0.7pt}, scale=0.7, baseline={([yshift=-.5ex]current bounding box.center)}]

    \foreach \x in {0,...,2}{
      \node[dot] at (\x,2){ };
      \node[circle, draw=black, inner sep=2pt] at (\x,2){ };
      }
      \foreach \x in {0,...,1}{
      \node[dot] at (\x,1){ };
      }
      \node[dot] at (0,0){};

      \draw[->,-latex] (0.2,2) -- (0.8,2);
      \draw[->,-latex] (1.2,2) -- (1.8,2);

      \draw[dashed] (0,0) -- (2.5,2.5);
    \draw[->,-latex] (0,-0.5) -- (0,3) node[anchor=south east] {$\omega$};
    \draw[->,-latex] (-0.5,0) -- (5,0) node[anchor=north west] {$e_1$};
    \end{tikzpicture}
  \]
    \caption{Comparison of the lattice cones for $G = \SL_2$ and $G^\vee = \PSL_2$.}
    \label{fig:latticeCones}
  \end{figure}
  
  The map $\psi_{\mathbf{i}}$ is portrayed in Figure \ref{fig:latticeCones}. It
  sends the circled points in $\mathcal{C}_{\mathbf{i}}^{\SL_2}$ to the circled points in $\mathcal{C}_{\mathbf{i}}^{\PSL_2}$. It also sends arrows to arrows; these correspond to crystal operations and will be discussed in Section \ref{section;crystal}.
\end{Ex}

\begin{Rmk}
  The map $\psi_{\mathbf{i}}$ was discussed in \cite{FH,KMc}. To check that the definitions in \cite{FH} and \cite{KMc} coincide, refer to \cite[Remark 2.14, 4.3]{KMc}.
\end{Rmk}

\begin{Pro}\cite[Proposition 4.11]{BZ01}
\label{bestcharts}
	For a double reduced word $\mathbf{i}$ of $(w_0,e)$, let $z=x_{\mathbf{i}}(t_1,\dots,t_m)$ and $\omega_{i^*}=-w_0\omega_i$. Then
	\[
		\Delta_{w_0\omega_{i_m},s_{i_m}\omega_{i_m}}(z)=t_m; \quad \Delta_{w_0s_{i_1^*}\omega_{i_1^*},\omega_{i_1^*}}(z)=t_1^{-1}.
	\]
\end{Pro}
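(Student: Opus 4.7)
The plan is to prove the first identity directly; the second identity follows by a mirror argument.

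For the first identity, write $z = z' \cdot x_{-k}(t_m)$ with $k = |i_m|$ and $z' = x_{\mathbf{i}'}(t_1, \ldots, t_{m-1})$, where $\mathbf{i}' = (i_1, \ldots, i_{m-1})$ is a reduced word for $(w_0 s_k, e)$. Using the factorization $x_{-k}(t) = y_k(t)\alpha_k^\vee(t^{-1})$ together with the $H$-covariance property \eqref{BonusIdentity}, and noting that $\langle s_k\omega_k, \alpha_k^\vee\rangle = \langle\omega_k - \alpha_k, \alpha_k^\vee\rangle = -1$, one obtains
\[
\Delta_{w_0\omega_k,\, s_k\omega_k}(z) \;=\; t_m\cdot \Delta_{w_0\omega_k,\, s_k\omega_k}(z'\, y_k(t_m)).
\]

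Next I would realize this minor as the matrix coefficient $\langle \phi, g\cdot v_{s_k\omega_k}\rangle$ in the fundamental representation $V_{\omega_k}$, where $v_{s_k\omega_k} = \overline{s_k} v_{\omega_k} = F_k v_{\omega_k}$ and $\phi$ is a functional of weight $-w_0\omega_k$. Because $\langle \omega_k, \alpha_k^\vee\rangle = 1$ forces $F_k^2 v_{\omega_k} = 0$ in $V_{\omega_k}$, we get $y_k(t) v_{s_k\omega_k} = \exp(tF_k) F_k v_{\omega_k} = F_k v_{\omega_k} = v_{s_k\omega_k}$, and hence $\Delta_{w_0\omega_k,\, s_k\omega_k}(z' y_k(t_m)) = \Delta_{w_0\omega_k,\, s_k\omega_k}(z')$.

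The main obstacle is then to prove $\Delta_{w_0\omega_k,\, s_k\omega_k}(z') = 1$. Here I would use that $z' \in L^{w_0 s_k, e}$ together with the reduced factorization $\overline{w_0} = \overline{w_0 s_k}\,\overline{s_k}$, which yields $\overline{w_0}^{-1} z'\overline{s_k} = \overline{s_k}^{-1}\bigl(\overline{w_0 s_k}^{-1} z'\bigr)\overline{s_k}$. By Proposition \ref{charaofz} (in the universal cover), $\overline{w_0 s_k}^{-1} z' = u_- u$ with $u_- \in U_-$, $u \in U$, and trivial $H$-part. Decomposing $u = u' \cdot x_k(c)$ along the factorization $U = (U \cap \overline{s_k}^{-1} U \overline{s_k}) \cdot U_{\alpha_k}$, a weight-by-weight analysis in $V_{\omega_k}$ computes $\overline{s_k}^{-1} u_- u \overline{s_k}\, v_{\omega_k}$ and extracts its $v_{\omega_k}$-coefficient. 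The decisive input is the vanishing $E_\alpha F_k v_{\omega_k} = 0$ for every positive root $\alpha \neq \alpha_k$, which holds because $\omega_k - \alpha_k + \alpha$ fails to be a weight of $V_{\omega_k}$; combined with $\overline{s_k}^{-1} v_{s_k\omega_k} = v_{\omega_k}$, it forces the coefficient to be exactly $1$ independently of $u_-, u', c$.

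For the second identity, I would run the mirror argument on the opposite end of the word: write $z = x_{-k}(t_1)\cdot z''$ with $k = |i_1|$, use the alternative factorization $x_{-k}(t) = \alpha_k^\vee(t^{-1}) y_k(t^{-1})$, and apply the left-multiplication version of \eqref{BonusIdentity}. The identity $w_0 s_{k^*}\omega_{k^*} = -s_k\omega_k$ gives $\langle w_0 s_{k^*}\omega_{k^*}, \alpha_k^\vee\rangle = 1$, which accounts exactly for the factor $t_1^{-1}$; the trivial action of $y_k(t_1^{-1})$ and the vanishing $\Delta_{w_0 s_{k^*}\omega_{k^*},\omega_{k^*}}(z'') = 1$ (where $z'' \in L^{s_k w_0, e}$) then follow by a symmetric weight-theoretic argument in $V_{\omega_{k^*}}$, or equivalently by transporting the first identity through the antiautomorphism $(\cdot)^\iota$.
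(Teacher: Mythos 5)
The paper has no proof of its own here---Proposition \ref{bestcharts} is quoted directly from \cite[Proposition 4.11]{BZ01}---so I am judging your argument on its own terms. Your outer scaffolding is sound: splitting off $x_{-k}(t_m)=y_k(t_m)\alpha_k^\vee(t_m^{-1})$, using \eqref{BonusIdentity} together with $\langle s_k\omega_k,\alpha_k^\vee\rangle=-1$ to extract the factor $t_m$, and noting that $y_k(t_m)$ fixes $v_{s_k\omega_k}=F_k v_{\omega_k}$ because $F_k^2 v_{\omega_k}=0$, correctly reduces the first identity to $\Delta_{w_0\omega_k,s_k\omega_k}(z')=1$.

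Your argument for that last step, however, has a genuine gap. After writing $\overline{w_0 s_k}^{-1}z'=u_-u$ and $u=u'x_k(c)$, the weight analysis gives $u\overline{s_k}v_{\omega_k}=v_{s_k\omega_k}+c\,v_{\omega_k}$; so far so good. But the $v_{\omega_k}$-coefficient of $\overline{s_k}^{-1}u_-(v_{s_k\omega_k}+c\,v_{\omega_k})$ equals the $v_{s_k\omega_k}$-component of $u_-(v_{s_k\omega_k}+c\,v_{\omega_k})$, which is $1+c\cdot a$ where $a$ is the $v_{s_k\omega_k}$-component of $u_-v_{\omega_k}$. Your assertion that the result is $1$ ``independently of $u_-,u',c$'' is therefore false: for a generic $u_-\in U_-$ one has $a\neq 0$, and the vanishing $E_\alpha F_k v_{\omega_k}=0$ only controls $u'\in U\cap\overline{s_k}^{-1}U\overline{s_k}$, saying nothing about the lower-triangular factor. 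What you must additionally prove is $a=0$. This does hold, because $a=\langle v_{s_k\omega_k}^*,(\overline{w_0 s_k}^{-1}z')v_{\omega_k}\rangle=\langle v_{\omega_k}^*,\overline{w_0}^{-1}z'v_{\omega_k}\rangle=\Delta_{w_0\omega_k,\omega_k}(z')$, and the latter minor vanishes on $L^{w_0s_k,e}$: writing $z'=u_1\overline{w_0 s_k}u_2$ with $u_1,u_2\in U$ gives $\overline{w_0}^{-1}z'v_{\omega_k}=(\overline{w_0}^{-1}u_1\overline{w_0})\,\overline{s_k}^{-1}v_{\omega_k}$, which lies in $U_-\cdot v_{s_k\omega_k}$ and hence has all weights strictly below $\omega_k$. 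You invoked Proposition \ref{charaofz} only to cancel the $H$-part; the decisive input---that $z'$ lies in the Bruhat cell of the strictly shorter element $w_0s_k$, so the $w_0$-minor $\Delta_{w_0\omega_k,\omega_k}$ vanishes on it---is missing and must be supplied. Once it is, the mirror/$(\cdot)^\iota$ treatment of the second identity goes through along the same lines.
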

Together with Proposition \ref{Pro:compareCluster} and Theorem \ref{Main}, we can now compare $\Phi_{BK}$ and $\Phi_{BK}^\vee$:
\begin{Thm}\label{compare}
	The natural real extension of $\psi_{\mathbf{i}}$ restricts to an isomorphism of real BK cones $\mathcal{C}_{\mathbf{i}}^G(\mathbb{R})\to \mathcal{C}_{\mathbf{i}}^{G^\vee}(\mathbb{R})$. Moreover, the map $\psi_{\mathbf{i}}$ restricts to an injective map of integral BK cones: $\mathcal{C}_{\mathbf{i}}^G\to \mathcal{C}_{\mathbf{i}}^{G^\vee}$.
\end{Thm}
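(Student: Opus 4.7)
The plan is to work in the factorization chart $x_\mathbf{i}$, in which the comparison map takes the explicit form
\[
\psi_\mathbf{i}(\lambda^\vee, \xi_1, \ldots, \xi_m) = (\psi^H(\lambda^\vee),\ d_{i_1}\xi_1, \ldots, d_{i_m}\xi_m).
\]
This is a linear isomorphism over $\mathbb{R}$ (since $\psi^H\otimes\mathbb{R}$ and the diagonal scaling by positive integers are both invertible) and an injection on integer lattices whose image lies in the sublattice where the $k$-th coordinate is divisible by $d_{i_k}$. Therefore the only content of the theorem is that $\psi_\mathbf{i}$ carries the defining inequalities of $\mathcal{C}^G_\mathbf{i}$ onto those of $\mathcal{C}^{G^\vee}_\mathbf{i}$. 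By Corollary \ref{allcomparisonthesame} the choice of chart is immaterial, and by \eqref{1-connectedforall} I may further assume $G$ is simply connected.

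Using \eqref{BKonL} I decompose
\[
\Phi_{BK}(hz) = \sum_{i \in \bm I} \Delta_{w_0\omega_i, s_i\omega_i}(z) + h^{-w_0\alpha_i} \Delta_{w_0 s_i\omega_i, \omega_i}(z),
\]
so $\mathcal{C}^G_\mathbf{i}$ is carved out by $2r$ tropical conditions, one for each $i$ and each summand; analogously for $\mathcal{C}^{G^\vee}_\mathbf{i}$ using the dual minors. It is then enough to establish, for each $i$, the two tropical identities
\[
(\Delta^\vee_{w_0\omega^\vee_i, s_i\omega^\vee_i})^t \circ \psi_\mathbf{i} = d_i \cdot (\Delta_{w_0\omega_i, s_i\omega_i})^t,
\]
\[
((h^\vee)^{-w_0\alpha^\vee_i} \Delta^\vee_{w_0 s_i\omega^\vee_i, \omega^\vee_i})^t \circ \psi_\mathbf{i} = d_i \cdot (h^{-w_0\alpha_i} \Delta_{w_0 s_i\omega_i, \omega_i})^t,
\]
since positivity of $d_i$ then forces the inequality $(\Phi^\vee_{BK})^t\circ\psi_\mathbf{i} \geq 0$ to be equivalent, term by term, to $\Phi_{BK}^t \geq 0$. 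The identity for the $H$-factor, namely $((h^\vee)^{-w_0\alpha^\vee_i})^t\circ\psi^H = d_i\cdot(h^{-w_0\alpha_i})^t$, is a short direct calculation using $\psi(\alpha_i^\vee) = d_i\alpha_i$, the Weyl-equivariance of the isomorphism $\psi:\mathfrak{h}\to\mathfrak{h}^*$, and the relation $\langle\gamma,\xi\rangle = (\psi(\xi),\gamma)_{\mathfrak{h}^*}$ induced by the bilinear form.

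For the minor identities I intend to use Theorem \ref{Main} to replace $\psi_\mathbf{i}$ by the cluster-chart comparison $\psi_\sigma$, which acts on the $k$-th cluster coordinate by scaling its tropical exponent by $d_{i_k}$. The key fact is that each of the generalized minors $\Delta_{w_0\omega_i, s_i\omega_i}$ and $\Delta_{w_0 s_i\omega_i, \omega_i}$ is a cluster variable of $G^{w_0,e}$ occurring at a position of the seed matrix whose skew-symmetrizer entry equals $d_i$ (the symmetrizer index of the fundamental weight $\omega_i$ attached to the minor). Once this is known, the construction of $\varPsi_{\sigma'}$ in the seed $\sigma'$ where $\Delta$ is initial yields $\varPsi^*_{\sigma'}(\Delta^\vee) = \Delta^{d_i}$, and Proposition \ref{Pro:compareCluster} transports the resulting tropical identity back to any chart, completing the argument. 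The principal obstacle is precisely this cluster-theoretic statement: one must exhibit an explicit mutation sequence from $\sigma(\mathbf{i})$ reaching seeds in which these specific minors are initial cluster variables, or alternatively verify the tropical scaling directly from the Berenstein--Zelevinsky combinatorial Laurent expansions of the minors in factorization coordinates.
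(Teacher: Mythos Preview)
Your overall strategy matches the paper's: reduce to showing that each summand of the tropicalized BK potential scales by the appropriate $d_i$ under the tropical comparison map, then use Corollary~\ref{allcomparisonthesame} to work in whatever chart is convenient. The gap you flag at the end---finding a seed where the relevant minor is an initial cluster variable, or computing Laurent expansions directly---is real, and neither of the two routes you suggest is what the paper does.

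The paper bypasses this obstacle with a simpler trick: it exploits the freedom in Corollary~\ref{allcomparisonthesame} to change the \emph{factorization chart} (i.e., the reduced word), not the cluster chart. Proposition~\ref{bestcharts} says that if a reduced word $\mathbf{i}_k=(i_1,\ldots,i_m)$ for $(w_0,e)$ is chosen with $|i_m|=k$, then $\Delta_{w_0\omega_k,s_k\omega_k}(z)=t_m$ in the chart $x_{\mathbf{i}_k}$; similarly if $|i_1|=k^*$ then $\Delta_{w_0 s_k\omega_k,\omega_k}(z)=t_1^{-1}$. In such a chart the desired tropical identity is a one-line computation, since the minor is literally a single coordinate and $\varPsi_{\mathbf{i}_k}$ raises that coordinate to the $d_k$-th power. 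Corollary~\ref{allcomparisonthesame} then transports the identity back to the original chart $\mathbf{i}$. No mutation sequences or Laurent combinatorics are required. So your proof becomes complete once you invoke Proposition~\ref{bestcharts} in place of the cluster-theoretic argument you sketch.
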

\begin{proof}
	We introduce the following notation for simplicity: 
	\begin{align*}
		p_i:=\Delta_{w_0\omega_i,s_i\omega_i}, \quad &  q_i:=\Delta_{w_0s_i\omega_i,\omega_i}\in \mathbb{Q}[L^{w_0,e}]; \\
		 p_i^\vee:=\Delta_{w_0\omega_i^\vee,s_i\omega_i^\vee}, \quad &  q_i^\vee:=\Delta_{w_0s_i\omega_i^\vee,\omega_i^\vee}\in \mathbb{Q}[L^{\vee;w_0,e}].
	\end{align*}
	For each index $k$, one can choose a double reduced word $\mathbf{i}_k=(i_1,\dots,i_m)$ such that $|i_m|=k$, and $\mathbf{i}_{k^*}=(i_1,\dots,i_m)$ such that $|i_1|=k^*$. Let $\alpha_{k^*}$ denote the function which sends $h$ to $h^{-w_0\alpha_k}$, and let 
	\[
	(\alpha_{k^*}\cdot q_k)(h,z) = \alpha_{k^*}(h) q_k(z).\] Then by Proposition \ref{bestcharts}, the function
  \begin{align*}
    \left((\alpha^\vee_{k^*}\cdot q_k^\vee)\circ  \varPsi_{\mathbf{i}_{k^*}}\right)^t\colon \mathcal{L}_{\mathbf{i}_{k^*}}&\to \mathbb{Z}
    \end{align*}
    can be written
    \begin{align*}
     \left(\sum a_i\omega_i^\vee,\sum \xi_ie_i\right)&\mapsto d_{k^*}\left(a_{k^*}-\xi_1\right).
  \end{align*}  
  From Proposition \ref{bestcharts} one also has:
	\[
		d_k\left(\alpha_{k^*}\cdot  q_k\right)^t\colon \mathcal{L}_{\mathbf{i}_{k^*}}\to \mathbb{Z}\ :\ \left(\sum a_i\omega_i^\vee,\sum \xi_ie_i\right)\mapsto d_{k^*}\left(a_{k^*}-\xi_1\right).
	\]
	Thus we get:
	\[
		\left((\alpha^\vee_{k^*}\cdot  q_k^\vee)\circ \varPsi_{\mathbf{i}_{k^*}}\right)^t=d_{k^*}\left(\alpha_{k^*}\cdot q_k\right)^t.
	\]
	Similarly, for the other terms, we get $(p_k^\vee\circ \varPsi_{\mathbf{i}_{k}})^t=d_kp_k^t$, where we write $p_k(h,z) = p_k(z)$.
	
	Then by Corollary \ref{allcomparisonthesame}, we have
	\begin{equation}
	\label{scalingnicely}
		(p_k^\vee\circ \varPsi_{\sigma})^t=d_kp_k^t; \quad \left((\alpha^\vee_{k^*}\cdot  q_k^\vee)\circ \varPsi_{\sigma}\right)^t=d_{k^*}\left(\alpha_{k^*}\cdot  q_k\right)^t,
	\end{equation}
	where $\sigma\in |\sigma(\mathbf{i'})|,$ for any double reduced word $\mathbf{i'}$ for $(w_0,e)$. From \eqref{scalingnicely} and \eqref{1-connectedforall}, a point $x= (h,z) \in \mathcal{L}_\mathbf{i}(\mathbb{R})$ has $\varPsi_\sigma^t(x)\in \mathcal{C}^{G^\vee}_\sigma(\mathbb{R})$ if and only if
	\[
	    d_k p_k^t(z) \geqslant 0 \ \text{~and~}\ d_{k^*}\left(\alpha_{k^*}\cdot  q_k\right)^t(h,z) \geqslant 0, \quad \forall k\in {\bm I}.
	\]
	Dividing both sides of each equation by $d_k$, this is equivalent to the condition that $x\in \mathcal{C}^G_\sigma(\mathbb{R})$. 
	
	Again by Corollary \ref{allcomparisonthesame}, we can replace $\psi_\sigma$ with $\psi_\mathbf{i}$. In particular, restricting to the integral cone $\mathcal{C}_\mathbf{i}^G$, the map
	\[
		\psi_{\mathbf{i}}\colon \mathcal{C}_{\mathbf{i}}^{G}=\mathcal{C}_{\mathbf{i}}^{G}(\mathbb{R})\cap \mathcal{L}_\mathbf{i}\to \mathcal{C}_{\mathbf{i}}^{G^\vee}=\mathcal{C}_{\mathbf{i}}^{G^\vee}(\mathbb{R})\cap \mathcal{L}^\vee_\mathbf{i}
	\]
	is an injection of cones.
\end{proof}

  We give an direct computation for the comparison of the BK cones for $\SO(5)$ and $\Sp(4)$ in Appendix \ref{B2vsC2}. We immediately have the following counterpart of Corollary \ref{allcomparisonthesame}.
  
  \begin{Cor}\label{allcomparisonthesamecone}
	Let $\mathbf{i}$ be a double reduced word for $(w_0,e)$, and consider the (rational) comparison map
	\[
	\varPsi_{\mathbf{i}}\colon G^{w_0,e} \to G^{\vee;w_0,e}.
	\]
	For any toric charts $\theta\in [x_\mathbf{i}]$ and $\theta^\vee\in [x^\vee_{\mathbf{i}}]$ on $G^{w_0,e}$ and  $G^{\vee;w_0,e}$, respectively, the tropicalized map $\varPsi_{\mathbf{i}}^t$ restricts to an injection of cones
	\[
	\varPsi_{\mathbf{i}}^t\colon \mathcal{C}_{\theta}^G \to \mathcal{C}_{\theta^\vee}^{G^\vee}.
	\]
\end{Cor}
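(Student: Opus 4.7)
The plan is to deduce this from Theorem \ref{compare} by exploiting the naturality of tropicalization with respect to positively equivalent charts. The key observation is that $\Phi_{BK}$ is an intrinsic regular (and in particular positive rational) function on the positive variety $(G^{w_0,e},[x_\mathbf{i}])$, so its tropicalization transforms in a controlled way as we change coordinates within the positive equivalence class.

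First, I would verify that for any chart $\theta\in[x_\mathbf{i}]$, the identity map $\id^t\colon (G^{w_0,e},x_\mathbf{i})^t\to (G^{w_0,e},\theta)^t$ restricts to a bijection of potential cones $\mathcal{C}^G_\mathbf{i}\to \mathcal{C}^G_\theta$. This follows from compatibility of tropicalization with composition: since $\Phi_{BK}$ is a positive rational function on the positive variety and $\id=\theta^{-1}\circ x_\mathbf{i}\circ \theta^{-1}_\theta$ is positively equivalent, we have $\Phi_{BK}^t|_\theta\circ \id^t = \Phi_{BK}^t|_{x_\mathbf{i}}$, so the defining inequality $\Phi_{BK}^t\geqslant 0$ is transported correctly. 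The same argument applied to $\Phi^\vee_{BK}$ gives a bijection $(\id^\vee)^t\colon \mathcal{C}^{G^\vee}_{x_\mathbf{i}^\vee}\to \mathcal{C}^{G^\vee}_{\theta^\vee}$.

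Next, I would assemble the square
\[
\begin{tikzcd}
(G^{w_0,e},x_\mathbf{i})^t \arrow[r,"\id^t"] \arrow[d,"\psi_\mathbf{i}"'] & (G^{w_0,e},\theta)^t \arrow[d,"\varPsi_\mathbf{i}^t"] \\
(G^{\vee;w_0,e},x_\mathbf{i}^\vee)^t \arrow[r,"(\id^\vee)^t"'] & (G^{\vee;w_0,e},\theta^\vee)^t
\end{tikzcd}
\]
whose commutativity is exactly (a special case of) Corollary \ref{allcomparisonthesame}. Combining this with the two bijections of cones from the previous step and the fact that $\psi_\mathbf{i}\colon \mathcal{C}^G_\mathbf{i}\to \mathcal{C}^{G^\vee}_\mathbf{i}$ is an injection of cones by Theorem \ref{compare}, the composition $(\id^\vee)^t\circ \psi_\mathbf{i}\circ (\id^t)^{-1}=\varPsi_\mathbf{i}^t$ restricts to an injection $\mathcal{C}^G_\theta\to \mathcal{C}^{G^\vee}_{\theta^\vee}$.

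There is no serious obstacle here; the content is a bookkeeping exercise to package Theorem \ref{compare} (which is stated for the factorization chart $x_\mathbf{i}$) together with Corollary \ref{allcomparisonthesame} (which says the tropical comparison map is independent of the chart) and the general principle that a potential cone is carried onto a potential cone by a positive equivalence. The only point requiring mild care is the first step — confirming that positive equivalence of charts implies equality $\Phi_{BK}^t|_\theta\circ \id^t=\Phi_{BK}^t|_{x_\mathbf{i}}$, which is an immediate consequence of the fact that tropicalization commutes with composition of positive rational maps, as recalled in Section \ref{positivesection}.
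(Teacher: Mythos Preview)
Your proposal is correct and matches the paper's approach: the paper states this corollary as an immediate counterpart of Corollary \ref{allcomparisonthesame}, without giving a separate proof, so the bookkeeping you spell out (transporting the cone under positive equivalence, then invoking Theorem \ref{compare} and Corollary \ref{allcomparisonthesame}) is exactly what is implicitly intended.
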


\subsection{Crystal Structure} 
\label{section;crystal}
In this section, we will recall the crystal structure on the BK cones, and show that $\psi_{\mathbf{i}}$ respects this structure, in the sense described in Theorem \ref{crystalstructure}. We continue with notation from the previous section. In particular, let us fix a double reduced word $\mathbf{i}$ for $(w_0,e)$ of $W$ as before.

Recall from \cite{BKII} that $G^{w_0,e}$ has the structure of a positive decorated geometric crystal, which gives $\mathcal{C}_\mathbf{i}^{G}$ the structure of a normal Kashiwara crystal. This means that there are  maps
\begin{align*}
	\wt\colon& \mathcal{C}_\mathbf{i}^{G} \to P^\vee; \\
	\varepsilon_i,\varphi_i\colon& \mathcal{C}_\mathbf{i}^{G} \to \mathbb{Z}\sqcup \{ -\infty \};\\
	\tilde{e}_i,\tilde{f}_i\colon&  \mathcal{C}_\mathbf{i}^{G} \sqcup\{\oslash\} \to \mathcal{C}_\mathbf{i}^{G} \sqcup\{\oslash\},
\end{align*}
where $i\in \bm{I}$, and $\oslash$ is a ghost element. The maps satisfy certain axioms, see \cite{MK93}.

Consider the projection  
\[
  \hw\colon G^{w_0,e}=H\times L^{w_0,e}\to H.
\]
Then for simply connected $G$, Proposition \ref{charaofz} and \eqref{BonusIdentity} imply
\[
		\Delta_{w_0\omega_i,\omega_i}(hz)=h^{w_0\omega_i}\Delta_{w_0\omega_i,\omega_i}(z)=h^{w_0\omega_i},
\]
which means that the map $\hw:G^{w_0,e}\to H$ can also be characterized by the following property:
\begin{equation}\label{hwproperty}
	\left(\hw(g)\right)^{w_0\omega_i}=\Delta_{w_0\omega_i,\omega_i}(g), \quad \forall g\in G^{w_0,e}, i\in \bm{I}.  
\end{equation}

Let $\hw^{t}:\mathcal{C}^G_\mathbf{i} \to H^t$ be the restriction to $\mathcal{C}^G_\mathbf{i}$ of the tropicalization of $\hw$. For simplicity, let
\[
	\hw^{-t} (\lambda^\vee):=(\hw^t)^{-1}(\lambda^\vee)\subset \mathcal{C}^G_\mathbf{i}
\]
denote the preimage of $\lambda^\vee$ under the map $\hw^{t}$. 
\begin{Thm}\cite[Theorem 6.15]{BKII}\label{Claim6.9}	
With respect to the chart $x_{\mathbf{i}}$, the image of $\hw^t$ lies in set of the dominant weights $X_*^+(H)$. So there is a direct decomposition:
	\[
		\mathcal{C}_{\mathbf{i}}^G=\bigsqcup_{\lambda^\vee\in X_*^+(H)}\hw^{-t}(\lambda^\vee).
	\]
	Moreover,
	\[
		\hw^{-t}(\lambda^\vee)\cong B_{\lambda^\vee},
	\]
	where $B_{\lambda^\vee}$ is the crystal associated with the irreducible $G^\vee$-module with highest weight $\lambda^\vee$.
\end{Thm}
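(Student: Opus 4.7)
The plan is to exploit the explicit formula \eqref{hwproperty} for $\hw$, the explicit presentation \eqref{BKonL} of $\Phi_{BK}$, and the Berenstein–Kazhdan geometric crystal structure on $G^{w_0,e}$. Tropicalizing all three pieces simultaneously should yield both the dominance statement and the identification with $B_{\lambda^\vee}$.

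First I would tropicalize the characterization \eqref{hwproperty}. Since $\hw$ factors as the projection $G^{w_0,e}=H\times L^{w_0,e}\to H$, in the chart $x_{\mathbf{i}}$ the map $\hw^t$ just reads off the $H$-component; in particular $\langle w_0\omega_i, \hw^t(\xi)\rangle=\Delta_{w_0\omega_i,\omega_i}^t(\xi)$ for every $i$. To prove the image is dominant, I would test $\lambda^\vee=\hw^t(\xi)$ against each simple coroot. Writing \eqref{BKonL} at a point of the cone gives the two families of tropical inequalities
\[
    \Delta_{w_0\omega_i,s_i\omega_i}^t(z)\geqslant 0,\qquad \langle -w_0\alpha_i,\lambda^\vee\rangle+\Delta_{w_0s_i\omega_i,\omega_i}^t(z)\geqslant 0.
\]
Using the standard identities $\Delta_{w_0\omega_i,\omega_i}-\Delta_{w_0\omega_i,s_i\omega_i}$ being positive on $L^{w_0,e}$ (a consequence of the fact that $\Delta_{w_0\omega_i,\omega_i}=1$ on $\widehat{L}^{w_0,e}$ combined with the positivity of the Chevalley generators) and applying the same argument with the roles of the two factors swapped, one deduces $\langle \alpha_i, \lambda^\vee\rangle\geqslant 0$ for each $i$, so $\lambda^\vee\in X_*^+(H)$. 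The disjoint union decomposition is then tautological since the $\hw^t$-fibers partition $\mathcal{C}_\mathbf{i}^G$.

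For the identification $\hw^{-t}(\lambda^\vee)\cong B_{\lambda^\vee}$ I would invoke the geometric crystal framework of \cite{BKII}. There the cell $G^{w_0,e}$ carries a positive decorated geometric crystal with structure maps $\gamma$, $\varepsilon_i$, $\varphi_i$, and actions $e_i^c$ defined in terms of the Chevalley generators; the potential $\Phi_{BK}$ is precisely $\chi$-linear for this structure. Tropicalizing these structure maps in the chart $x_{\mathbf{i}}$ gives candidate crystal operators $\tilde e_i,\tilde f_i,\varepsilon_i,\varphi_i$ on $\mathcal{C}_\mathbf{i}^G$; the fact that $\Phi_{BK}$ is $\chi$-linear ensures that these operators preserve the cone, and the geometric crystal axioms tropicalize to the Kashiwara axioms, making $\mathcal{C}_\mathbf{i}^G$ a normal Kashiwara crystal. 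It is immediate from the formulas that $\wt=\hw^t$, so each fiber $\hw^{-t}(\lambda^\vee)$ is a normal subcrystal of highest weight $\lambda^\vee$.

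The main obstacle is then to promote this subcrystal to the irreducible crystal $B_{\lambda^\vee}$. I would argue in two steps. The connectedness of $\hw^{-t}(\lambda^\vee)$ under $\tilde e_i,\tilde f_i$ together with the existence of a unique point annihilated by all $\tilde e_i$ (namely the tropicalization of the highest-weight stratum, where all the minors $\Delta_{w_0\omega_i,s_i\omega_i}$ and $\Delta_{w_0s_i\omega_i,\omega_i}$ vanish tropically) forces $\hw^{-t}(\lambda^\vee)$ to inject into $B_{\lambda^\vee}$ by Stembridge/Joseph-style uniqueness of the irreducible normal crystal with given highest weight. For surjectivity I would count: a direct computation in the factorization chart shows that $|\hw^{-t}(\lambda^\vee)|$ equals the number of lattice points in the Berenstein–Zelevinsky string cone cut out by $\langle \lambda^\vee,\alpha_i\rangle$, which by \cite{BKII, BZ01} is $\dim V_{\lambda^\vee}$. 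Combining injectivity with equality of cardinalities yields the desired isomorphism of crystals. The delicate point, and the step I would expect to be the hardest, is verifying that the tropicalized BK structure maps really do satisfy the Kashiwara axioms on the nose (in particular the involutivity and the $\varepsilon_i/\varphi_i$ compatibility), which is exactly the content of the main theorem of \cite{BKII}.
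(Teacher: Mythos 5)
The paper does not prove this statement; it is cited verbatim from Berenstein--Kazhdan \cite[Theorem~6.15]{BKII}, so there is no in-paper argument against which to compare. Your sketch is a plausible reconstruction of how the BKII argument proceeds (tropicalize the decorated geometric crystal on $G^{w_0,e}$, check the Kashiwara axioms, identify fibers of $\hw^t$ as connected normal highest weight crystals), and you correctly flag that the verification of the crystal axioms is the real work. Two steps as written, however, do not go through.

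The dominance step: from the cone inequalities $\Delta^t_{w_0\omega_i,s_i\omega_i}(z)\geqslant 0$ and $\langle-w_0\alpha_i,\lambda^\vee\rangle+\Delta^t_{w_0s_i\omega_i,\omega_i}(z)\geqslant 0$, you only obtain $\langle\alpha_{i^*},\lambda^\vee\rangle\geqslant -\Delta^t_{w_0s_i\omega_i,\omega_i}(z)$, and the right-hand side need not be $\geqslant 0$. Your appeal to the ``positivity of $\Delta_{w_0\omega_i,\omega_i}-\Delta_{w_0\omega_i,s_i\omega_i}$'' cannot be tropicalized: that expression is a difference, not a positive rational function, so it has no tropicalization in the sense used here. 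In BKII dominance of the image of $\hw^t$ is not extracted directly from the potential inequalities; it is read off from the normal crystal structure by evaluating $\varphi_i\geqslant 0$ at the unique highest weight element of each fiber, where one also has $\varepsilon_i=0$, hence $\langle\alpha_i,\lambda^\vee\rangle=\varphi_i\geqslant 0$.

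The surjectivity step is circular. You propose to establish $\hw^{-t}(\lambda^\vee)\cong B_{\lambda^\vee}$ by first injecting and then counting, with the count $|\hw^{-t}(\lambda^\vee)|=\dim V_{\lambda^\vee}$ attributed to a ``direct computation'' justified ``by \cite{BKII, BZ01}.'' But that equality is a consequence of precisely the crystal isomorphism you are trying to prove, not an independent input; in the paper it is used downstream of the present theorem (e.g.\ in the proof of Theorem~\ref{theoremcomparevolume}). The BKII argument avoids any cardinality count: once one knows the tropicalization is a normal crystal, that each fiber of $\hw^t$ is connected under $\tilde e_i,\tilde f_i$, and that each fiber has a unique element killed by all $\tilde e_i$, Joseph's (or Stembridge's) uniqueness theorem for connected normal highest weight crystals gives the isomorphism with $B_{\lambda^\vee}$ directly, with no counting required.
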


From \cite[Lemma 3.10]{TN}, the operators $\tilde{e}_i$ and $\tilde{f}_i$ on the crystal $\mathcal{C}_\mathbf{i}^{G}$ can be written explicitly as follows. 
Let $v_1,\dots, v_m$ be the standard basis of $\mathbb{Z}^m$. Let 
\[
x:=\left(h,\sum\xi_j v_j\right)\in \mathcal{C}^{G}_\mathbf{i} \subset \mathcal{L}=X_*(H)\times \mathbb{Z}^m.\]
Then the crystal operators on $\mathcal{C}_\mathbf{i}^G$ are given by
\begin{align*}
	\tilde{f}_i\left(h,\sum\xi_jv_j\right) &=
		\begin{cases}
			\Big(h,\sum\xi_jv_j+v_{n_f}\Big) & \text{if~}  \Big(h,\sum\xi_jv_j+v_{n_f}\Big)\in \mathcal{C}_\mathbf{i}^G,\\
			\oslash &\text{else;}
		\end{cases} \\
	\tilde{e}_i\left(h,\sum\xi_jv_j\right) &=
		\begin{cases}
			\Big(h,\sum\xi_jv_j-v_{n_e}\Big) & \text{if~}  \Big(h,\sum\xi_jv_j-v_{n_e}\Big)\in \mathcal{C}_\mathbf{i}^G,\\
			\oslash &\text{else.}
		\end{cases}
\end{align*}
The indices $n_f=n_f(x,i)$ and $n_e=n_e(x,i)$ are given by:
\begin{align}
	n_f & :=\min\left\{ l \ \Big|\  1\leqslant l \leqslant m,~i_l=i, X_l=\min_{l'}\{X_{l'}\mid i_{l'}=i\} \right\}; \label{eq;actionE} \\
	n_e & :=\max\left\{ l \ \Big|\  1\leqslant l \leqslant m,~i_l=i, X_l=\min_{l'}\{X_{l'}\mid i_{l'}=i\} \right\}, \nonumber
\end{align}
where, for an index $l$, 
\[
	X_l(x,i)=\sum_{k=1}^{l} a_{i_k,i} \xi_{k}.
\]
As in Remark \ref{aboutsign}, here we drop the minus signs from the $i_j$'s.
Observe that, if $x,\tilde{e}_i x\in \mathcal{C}^G_\mathbf{i}$, then
\begin{equation}
\label{operatorapplication}
n_e(x,i)=n_e (\tilde{e}_i x,i).
\end{equation}

\begin{Thm}
\label{crystalstructure}
	Consider the map $\psi_\mathbf{i}\colon \mathcal{C}^G_\mathbf{i} \to \mathcal{C}^{G^\vee}_\mathbf{i}$ as in Theorem \ref{compare}. Then for any $i\in {\bm I}$, 
	\[
		\psi_\mathbf{i}\circ\tilde{e}_i=\tilde{e}_i^{d_i}\circ\psi_\mathbf{i},\quad \psi_\mathbf{i}\circ \tilde{f}_i=\tilde{f}_i^{d_i}\circ \psi_\mathbf{i},
	\]
	where we write $\tilde{e}_i,\tilde{f}_i$ for the crystal operators in both $\mathcal{C}^G_\mathbf{i}$ and $\mathcal{C}^{G^\vee}_\mathbf{i}$.
\end{Thm}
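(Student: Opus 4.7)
The plan is to compute both sides in the factorization-chart coordinates $x_\mathbf{i}$ on $G^{w_0,e}$ and $x^\vee_\mathbf{i}$ on $G^{\vee;w_0,e}$. Tropicalizing \eqref{xtoxvee}, the comparison map $\psi_\mathbf{i}$ acts in coordinates by
\[
    \psi_\mathbf{i}(h, \textstyle\sum_j \xi_j v_j) = (\psi^H(h), \sum_j d_{i_j}\, \xi_j\, v_j),
\]
where we follow Remark \ref{aboutsign} in writing $d_{i_j}$ for $d_{|i_j|}$. The central identity is the scaling
\[
    X^\vee_l(\psi_\mathbf{i}(x), i) \;=\; d_i\, X_l(x, i),
\]
which follows from $A^\vee = A^T$ (so $a^\vee_{i_k, i} = a_{i, i_k}$) together with the symmetrizer identity $a_{i, j}\, d_j = a_{j, i}\, d_i$:
\[
    X^\vee_l(\psi_\mathbf{i}(x), i) \;=\; \sum_{k=1}^l a_{i, i_k}\, d_{i_k}\, \xi_k \;=\; d_i \sum_{k=1}^l a_{i_k, i}\, \xi_k \;=\; d_i\, X_l(x, i).
\]
Since $X^\vee_l$ is scaled by the uniform positive factor $d_i$, the subset of $\{l : i_l = i\}$ on which $X^\vee_l$ attains its minimum coincides with that for $X_l$; in particular $n_e(\psi_\mathbf{i}(x), i) = n_e(x, i)$ and $n_f(\psi_\mathbf{i}(x), i) = n_f(x, i)$.

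For the $\tilde{e}_i$ statement, I will prove by induction on $k$ that
\[
    \tilde{e}_i^k\, \psi_\mathbf{i}(x) \;=\; \psi_\mathbf{i}(x) - k\, v_{n_e(x, i)}, \quad k = 0, 1, \ldots, d_i.
\]
That each intermediate state lies in $\mathcal{C}^{G^\vee}_\mathbf{i}$ follows from convexity of the real cone: they are integer points on the segment from $\psi_\mathbf{i}(x) \in \mathcal{C}^{G^\vee}_\mathbf{i}$ to $\psi_\mathbf{i}(\tilde{e}_i x) \in \mathcal{C}^{G^\vee}_\mathbf{i}$, using Theorem \ref{compare}. The inductive step requires $n_e$ to be stable: after $k$ subtractions the values become $d_i X_l(x, i) - 2k$ for $l \geq n_e$ and $d_i X_l(x, i)$ for $l < n_e$. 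For $l > n_e$ with $i_l = i$, the maximality of $n_e$ among minimizers of $X_l(x, \cdot)$ forces the strict inequality $X_l \geq X_{n_e} + 1$, producing a gap of at least $d_i$ above the new value at $n_e$; for $l < n_e$ with $i_l = i$, the bound $X_l \geq X_{n_e}$ already suffices to separate once $k \geq 1$. Hence the minimum is uniquely attained at $n_e$ throughout, and setting $k = d_i$ together with $d_{i_{n_e}} = d_i$ yields $\tilde{e}_i^{d_i}\, \psi_\mathbf{i}(x) = \psi_\mathbf{i}(\tilde{e}_i x)$.

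The $\tilde{f}_i$ statement will be deduced from the $\tilde{e}_i$ one by applying $\tilde{e}_i^{d_i}$ to both sides of the target identity $\tilde{f}_i^{d_i}\, \psi_\mathbf{i}(x) = \psi_\mathbf{i}(\tilde{f}_i x)$ and using the standard Kashiwara crystal relation $\tilde{e}_i^{d_i}\, \tilde{f}_i^{d_i} = \id$ (wherever the intermediate operators are defined) together with injectivity of $\tilde{e}_i^{d_i}$ on its domain. I expect the main technical obstacle to be precisely this passage: the direct analog of the $n_e$-stability argument fails when applied to the \emph{minimal} index $n_f$, since the strict inequality $X_l \geq X_{n_f} + 1$ for $l < n_f$ only yields the weak bound $2k \leq d_i$ (tight at $k = d_i/2$), so that ties arising from some $l < n_f$ with $X_l = X_{n_f} + 1$ cannot be ruled out by the scaling argument alone. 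It is the Kashiwara duality between $\tilde{e}_i$ and $\tilde{f}_i$ that allows one to close the proof.
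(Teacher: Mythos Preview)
Your proposal is correct and follows essentially the same approach as the paper's proof: both establish the key scaling identity $X^\vee_l(\psi_\mathbf{i}(x),i)=d_i\,X_l(x,i)$ to conclude $n_e(\psi_\mathbf{i}(x),i)=n_e(x,i)$, invoke convexity of the cone (via Theorem~\ref{compare}) to keep the intermediate points $\psi_\mathbf{i}(x)-k\,v_{n_e}$ in $\mathcal{C}^{G^\vee}_\mathbf{i}$, and then iterate $\tilde e_i$; the $\tilde f_i$ case is deduced from the crystal axioms. The only cosmetic difference is that the paper packages your explicit stability computation as the one-line observation \eqref{operatorapplication} and applies it repeatedly, whereas you unwind that observation directly (and, as a bonus, explain why the analogous stability fails for $n_f$, a point the paper leaves implicit).
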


\begin{proof}
    We will prove the statement for $\tilde{e}_i$; the one for $\tilde{f}_i$ follows immediately from the crystal axioms. Assume $x, \tilde{e}_i x\in \mathcal{C}^G_\mathbf{i}$. By Theorem \ref{compare}, we have $\psi_\mathbf{i}(x), \psi_\mathbf{i}(\tilde{e}_i x) \in \mathcal{C}^{G^\vee}_\mathbf{i}$.   
    
    From $i_{n_e(x,i)}=i$, one sees immediately that
    \[
    \psi_\mathbf{i}\left(\tilde{e}_i\left(h,\sum\xi_j v_j\right) \right) = \left( \psi(h), \sum d_{i_j} \xi_j v_j- d_i v_{n_e(x,i)} \right).
    \]
    By convexity of $\mathcal{C}^{G^\vee}_\mathbf{i}$, the lattice points between $\psi_\mathbf{i}(x)$ and $ \psi_\mathbf{i}(\tilde{e}_i x)$ are contained in $\mathcal{C}^{G^\vee}_\mathbf{i}$ as well. We will show that these are exactly the points obtained by repeatedly applying the operator $\tilde{e}_i$ in $\mathcal{C}^{G^\vee}_\mathbf{i}$.
    
    First, by the description above
    \[
    \tilde{e}_i\left( \psi_\mathbf{i}\left(h,\sum\xi_j v_j\right) \right) = \left( \psi(h), \sum d_{i_j} \xi_j v_j - v_{n_e(\psi_\mathbf{i}(x),i)} \right)\in \mathcal{C}^{G^\vee}_\mathbf{i}.
    \]
    Assume that $n_e(x,i)=n_e(\psi_\mathbf{i}(x),i)$. From \eqref{operatorapplication} applied to the crystal $\mathcal{C}^{G^\vee}_\mathbf{i}$, one gets that 
    \[
    n_e(\tilde{e}_i^k \psi_\mathbf{i} (x),i) = n_e( \psi_\mathbf{i} (x),i)= n_e(x,i),
    \]
    for $0\leqslant k < d_i$. So, applying $\tilde{e}_i$ repeatedly gives
    \[
    \psi_\mathbf{i} (\tilde{e}_i x) = \tilde{e}_i^{d_i} \psi_\mathbf{i}(x).
    \]
    
    It remains to show that $n_e(x,i)=n_e(\psi_\mathbf{i}(x),i)$. Indeed,
    \begin{align*}
       X_l(\psi_\mathbf{i}(x),i) & = \sum_{k=1}^l (a^T)_{i_k,i} d_{i_k} \xi_k 
        = \sum_{k=1}^l a_{i,i_k} d_{i_k}\xi_k 
         = \sum_{k=1}^l d_i a_{i_k,i} \xi_k
         = d_i X_l(x,i).
    \end{align*}
    So,
    \begin{align*}
        n_e(x,i) & = \max\left\{ l \ \Big|\  1\leqslant l \leqslant m,~i_l=i, X_l(x,i) =\min_{l'}\{X_{l'}(x,i)\mid i_{l'}=i\} \right\} \\
        & = \max\left\{ l \ \Big|\  1\leqslant l \leqslant m,~i_l=i, d_i X_l(x,i)=\min_{l'}\{d_i X_{l'}(x,i) \mid i_{l'}=i\} \right\} \\
        & = \max\left\{ l \ \Big|\  1\leqslant l \leqslant m,~i_l=i, X_l(\psi_\mathbf{i}(x),i )=\min_{l'}\{X_{l'} (\psi_\mathbf{i}(x),i)\mid i_{l'}=i\} \right\} \\
        & = n_e(\psi_\mathbf{i}(x),i).
    \end{align*}
    This proves the claim.
\end{proof}
\begin{Rmk}
	Restricting to $\hw^{-t}(\lambda^\vee)$ and identifying $\hw^{-t}(\lambda^\vee)$ with $B_{\lambda^\vee}$,  Theorem \ref{crystalstructure} is a special case of Kashiwara's theorem as in \cite[Theorem 5.1]{MK96}. Note that Theorem 2.6 in \cite{FH} is also a  special case of Kashiwara's theorem, as indicated by the authors. 
\end{Rmk}

\section{Poisson-Lie Duality and Langlands Duality} \label{poissonsection}

In this section, we pass to the complex points of our varieties, and assume $G$ is a (not necessarily simply connected) semisimple complex Lie group. Let $K$ be the compact real form of $G$. In \cite{ABHL}, a Poisson manifold $PT(K^*)$ was constructed as a `tropical-limit' of the dual Poisson-Lie group $K^*$. We use our results here to compute the symplectic volume of symplectic leaves of $PT(K^*)$.

\subsection{The Poisson Manifold \texorpdfstring{$PT(K^*)$}{PT(K*)}}

In this section we define the Poisson manifold $PT(K^*)$. Let us first fix some notation.

 It is shown in \cite{LW} that there is a standard Poisson-Lie structure $\pi_K$ on $K$ which depends only on a choice of invariant inner product $(\cdot,\cdot)$ on $\mathfrak{g}$. Let $(K^*,\pi_{K^*})$ be the Poisson-Lie group dual to $(K,\pi_K)$. 

The Lie group $K^*$ can be identified with $N_-A$, where $G=N_-AK$ is the Iwasawa decomposition of $G$. We regard $N_-A$ as a subset of $B_-\subset \widehat{G}$, where $\widehat{G}$ is the universal cover of $G$. Let $\mathbf{i}=(i_1,\dots, i_{m})$ be a double reduced word of $(w_0,e)$. Recall that for the seed $\sigma(\mathbf{i}),$ we have the following cluster variables on $\widehat{G}^{w_0,e}\subset B_-$ by \eqref{fullcluster}:
\[
	\Delta_{k}:=\Delta_{u_k\omega_{i_k},\omega_{i_k},} \text{~for~} k\in [1, m];\quad \Delta_k:=\Delta_{\omega_k, \omega_k} \text{~for~} k\in [-r, -1]
\]
where $u_k=s_{i_1}\cdots s_{i_{k}}$. The $\Delta_k$'s determine a toric chart 
\begin{equation}\label{clusterfor1connected}
  \sigma(\mathbf{i}) \colon(\mathbb{C}^\times)^{m+r}\to \widehat{G}^{w_0,e},
\end{equation}
which is positively equivalent to $x_{\mathbf{i}}$. 

The collection of functions
\begin{align*}
  \Delta_k, \overline{\Delta}_{k}, \text{~for~} k\in [1, m] \text{~and~} \Delta_k, \text{~for~} k\in [-r, -1]
\end{align*}
determines a real coordinate system on an open dense subset of $K^*$. Note that for $k\in [-r,-1]$, the principal minor $\Delta_k|_{K^*}$ is equal to its complex conjugate $\overline{\Delta}_k|_{K^*}$.

In \cite{ABHL}, we associated to $K^*$ a Poisson manifold $PT(K^*)$ with a constant Poisson bracket $\pi_{PT}$, which we will now describe. Define
\[
	PT(K^*,\sigma(\mathbf{i})) := \mathcal{C}^G_{\sigma(\mathbf{i})}(\mathbb{R})\times (S^1)^m,
\]
with coordinates
\begin{align*}
	(\lambda_{-r},\dots,\lambda_{-1},\lambda_{1},\dots,\lambda_{m},e^{\sqrt{-1}\varphi_{1}},\dots,e^{\sqrt{-1}\varphi_{m}})\in \mathcal{C}^G_{\sigma(\mathbf{i})}(\mathbb{R})\times (S^1)^m.
\end{align*}
Let
$PT^\circ(K^*,\sigma(\mathbf{i})):= \left(\mathcal{C}^G_{\sigma(\mathbf{i})}(\mathbb{R})\right)^\circ\times (S^1)^m$ be the Cartesian product of the interior of  $\mathcal{C}^G_{\sigma(\mathbf{i})}(\mathbb{R})$ and $(S^1)^m$.

We introduce the \emph{detropicalization map} $\mathfrak{L}_s$: For $s<0$,
\begin{align*}
	\mathfrak{L}_s\colon \mathbb{R}^{r+m} &\to K^*\\
	(\lambda_{-r},\dots,\lambda_{-1},\lambda_{1},\dots,\lambda_{m},\varphi_{1},\dots,\varphi_{m})&\mapsto 
	\sigma(\mathbf{i})\left( e^{s\lambda_{-r}},\dots, e^{s\lambda_{-1}},e^{s\lambda_{1}-\sqrt{-1}\varphi_{1}},\dots, e^{s\lambda_{m}-\sqrt{-1}\varphi_{m}}\right),
\end{align*}
where we recall $\sigma(\mathbf{i})\colon (\mathbb{C}^\times)^{m+r}\to B_-$ is the toric chart given by the seed $\sigma(\mathbf{i})$. We now define $\pi_{PT}$ as follows.
\begin{Thm}
    The following limit exists 
    \[
	    \lim_{s\to -\infty}\left(\mathfrak{L}_s^*(s\pi_{K^*})\Big|_{PT^\circ(K^*,\sigma(\mathbf{i}))}\right).
    \]
    It extends to a unique constant Poisson bivector $\pi_{PT}$ on $PT(K^*, \sigma(\mathbf{i}))$.
     Moreover, $\pi_{PT}$ is given explicitly by
  \begin{align}
    \{\lambda_k,\varphi_p\}&=0, & &\text{for~} k\geqslant p; \label{bracket1}\\
    \{\lambda_k,\varphi_p\}&=(\omega_{i_k},\omega_{i_p})-(u_k\omega_{i_k},u_p\omega_{i_p}),& &\text{for~} k<p; \label{bracket2} \\
    \{\lambda_k,\lambda_p\}&=\{\varphi_k,\varphi_p\}=0. & &\text{for all~}k,p.\nonumber
  \end{align}
\end{Thm}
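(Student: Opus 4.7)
My plan is to pull back the holomorphic generalized minors $\Delta_k$ from $\widehat{G}^{w_0,e}$ to $PT(K^*,\sigma(\mathbf{i}))$ along the detropicalization $\mathfrak{L}_s$, exploit the log-canonical form of the Sklyanin bracket on the cluster variables $\{\Delta_k\}$, and read off the limiting bivector in the coordinates $(\lambda_k,\varphi_k)$.

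The first step is to compute (or recall from Kogan--Zelevinsky / Gekhtman--Shapiro--Vainshtein) the Poisson brackets of the cluster variables on $\widehat{G}^{w_0,e}$ under the standard Sklyanin bracket. One has a log-canonical relation $\{\Delta_k,\Delta_p\}=\Omega_{kp}\,\Delta_k\Delta_p$ with constants $\Omega_{kp}$ expressible in terms of $(\omega_{i_k},\omega_{i_p})$ and $(u_k\omega_{i_k},u_p\omega_{i_p})$. Restricting to the real form $K^*\subset B_-$, whose Poisson bracket is the imaginary part of the holomorphic Sklyanin bracket, and separating real/imaginary parts of $\log\Delta_k=\log|\Delta_k|+i\arg\Delta_k$, yields log-canonical brackets for the real functions $\log|\Delta_k|$ and $\arg\Delta_k$. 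The two key consequences to extract are (i) $\{\arg\Delta_k,\arg\Delta_p\}_{K^*}=0$ for all $k,p$, and (ii) $\{\log|\Delta_k|,\arg\Delta_p\}_{K^*}$ is a constant that vanishes for $k\geq p$ and equals $(\omega_{i_k},\omega_{i_p})-(u_k\omega_{i_k},u_p\omega_{i_p})$ for $k<p$.

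The second step is to apply the detropicalization. On the open chart $PT^\circ(K^*,\sigma(\mathbf{i}))$ one has $\mathfrak{L}_s^*(\log|\Delta_k|)=s\lambda_k$ and $\mathfrak{L}_s^*(\arg\Delta_k)=-\varphi_k$. Writing the pulled-back bracket in these coordinates, the factors of $s$ distribute so that
\[
s\{\lambda_k,\varphi_p\}_{\mathfrak{L}_s^*\pi_{K^*}}=-\{\log|\Delta_k|,\arg\Delta_p\}_{K^*}\circ \mathfrak{L}_s
\]
is \emph{already} the claimed constant, independent of $s$; the brackets $s\{\lambda_k,\lambda_p\}$ carry an extra factor $1/s$ and vanish in the limit; and $s\{\varphi_k,\varphi_p\}$ vanishes identically by (i). This shows the limit exists on $PT^\circ$ and equals a constant bivector matching the formulas \eqref{bracket1}--\eqref{bracket2}; a constant bivector on an open dense subset extends uniquely to all of $PT(K^*,\sigma(\mathbf{i}))$, and the Jacobi identity holds trivially for a constant-coefficient bracket.

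\textbf{The main obstacle} is the second half of Step 1: establishing the precise log-canonical formulas on $K^*$, in particular the vanishing of the $\{\arg,\arg\}$-brackets and the triangular vanishing of the $\{\log|\cdot|,\arg\}$-brackets for $k\geq p$. This rests on a careful accounting of how the real Sklyanin structure interacts with the Iwasawa decomposition $K^*\cong N_-A$ and with the particular choice of generalized minors in the initial cluster; one expects this to essentially recover the bracket computations already carried out in \cite{ABHL}.
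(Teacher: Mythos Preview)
Your strategy is the paper's: the proof there defers the existence of the limit to \cite[Theorem~6.18]{ABHL}, quotes the log-canonical constants from Kogan--Zelevinsky \cite[Theorem~2.6]{KZ}, and then reads off the explicit brackets via \cite[Eq.~(14), Theorem~6.16]{ABHL}. Your Step~2 is exactly this last unpacking.

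One correction to Step~1, however. The Poisson structure on $K^*$ is \emph{not} ``the imaginary part of the holomorphic Sklyanin bracket'' restricted from $B_-$. The complex dual $G^*=(K^*)^{\mathbb C}$ sits inside $B\times B_-$, and the bracket formulas on $K^*$ involve \emph{both} projections $(\cdot)_1$ and $(\cdot)_2$ (this is why the paper's proof writes $\{(\Delta_k)_1,(\Delta_p\circ\tau)_2\}$). Moreover the bracket on $K^*$ is log-canonical only up to lower-order correction terms; these are what the limit $s\to-\infty$ kills, so your claim that $s\{\varphi_k,\varphi_p\}$ vanishes \emph{identically} is too strong --- it vanishes in the limit because the corrections are tropically subleading, which is the content of \cite[Theorem~6.16]{ABHL}. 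You correctly anticipate that this is the crux and that it is handled in \cite{ABHL}; the paper simply cites that reference rather than redoing the computation.
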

\begin{proof} The existence of the limit as a constant Poisson bivector is Theorem 6.18 of \cite{ABHL}. Let $G^*=(K^*)^{\mathbb{C}}$ be the dual Poisson-Lie group of $G$. Note that $G^*\subset B\times B_-$. Let $(\cdot)_i$ denote the pull-back of the natural projection of $G^*$ to the $i$-th factor. By \cite[Equation (14)]{ABHL}, the log-canonical part $\{\cdot,\cdot\}_{\log}$ of $\pi_{K^*}$ is
  \[
    \frac{\{(\Delta_k)_1, (\Delta_p\circ \tau)_2\}_{\log}}{(\Delta_k)_1(\Delta_p\circ \tau)_2}=(\omega_{i_k},\omega_{i_p})-(u_k\omega_{i_k},u_p\omega_{i_p}).
  \]
  By \cite[Theorem 2.6, Remark 2.8]{KZ}, we have:
  \[
    \frac{\{(\Delta_k)_1, (\Delta_p)_1\}_{\log}}{(\Delta_k)_1(\Delta_p)_1}=(\omega_{i_k},\omega_{i_p})-(u_k\omega_{i_k},u_p\omega_{i_p}),\quad \text{where~}  k\leqslant p.
  \]
  Take this into \cite[Theorem 6.16]{ABHL}, one get desired formulas.
\end{proof}
\begin{Rmk}
  Note that our normalization of the Poisson bracket here differs from that of \cite{ABHL} by a factor of $2$. We take the limit $s\to -\infty$ (rather than $s\to \infty$) because we use `$\min$' for the definition of tropicalization, rather than `$\max$' as was our convention in \cite{ABHL}.
\end{Rmk}

\subsection{Symplectic Leaves of \texorpdfstring{$PT(K^*)$}{PT(K*)}}

In this section, we study the constant Poisson bracket $\pi_{PT}$ and show that the symplectic leaves of $PT(K^*,\sigma(\mathbf{i}))$ are exactly the fibers of the highest weight map. We continue with the notation from the previous section, and fix a double reduced word $\mathbf{i}$ for $(w_0,e)$ as before. Recall ${\bm e}(\mathbf{i})\subset [1,m]$ is the set of $\mathbf{i}$-exchangeable indices. Note that $\Delta_k=\Delta_{w_0\omega_{i_k},\omega_{i_k}}$ for $k\in [1,m]\backslash {\bm e}(\mathbf{i})$. 

\label{pipt}
\begin{Pro}
	For $k\in [1,m]\backslash {\bm e}(\mathbf{i})$, the functions $\lambda_{k}$ are Casimirs of the constant bracket $\pi_{PT}$.
\end{Pro}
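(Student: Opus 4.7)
The plan is to show that $\lambda_k$ Poisson-commutes with every coordinate on $PT(K^*,\sigma(\mathbf{i}))$. Since the bracket $\pi_{PT}$ is constant, it suffices to check this on coordinate functions. From the explicit formulas, the brackets $\{\lambda_k,\lambda_p\}$ and $\{\varphi_k,\varphi_p\}$ vanish for all indices, and $\{\lambda_k,\varphi_p\}=0$ whenever $k\geqslant p$ by \eqref{bracket1}. Thus the only brackets in question are $\{\lambda_k,\varphi_p\}$ for $k<p$, and by \eqref{bracket2} the assertion reduces to the identity
\[
(\omega_{i_k},\omega_{i_p}) \;=\; (u_k\omega_{i_k},\,u_p\omega_{i_p}) \qquad \text{for all } p>k.
\]

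To establish this identity, I would exploit $W$-invariance of $(\cdot,\cdot)$ and set $v:=u_k^{-1}u_p$, so that
\[
(u_k\omega_{i_k},\,u_p\omega_{i_p}) \;=\; (v^{-1}\omega_{i_k},\,\omega_{i_p}).
\]
Under the shorthand of Remark \ref{aboutsign}, we have $u_k=s_{i_1}\cdots s_{i_k}$ and $u_p=s_{i_1}\cdots s_{i_p}$; successive cancellations of the involutive factors at the meeting point collapse $v$ to $s_{i_{k+1}}s_{i_{k+2}}\cdots s_{i_p}$, so $v^{-1}=s_{i_p}s_{i_{p-1}}\cdots s_{i_{k+1}}$ is a product of simple reflections indexed by $|i_j|$ with $k<j\leqslant p$.

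The crucial input is the hypothesis $k\notin \bm{e}(\mathbf{i})$: by the definition of $k^+$ in \eqref{k+}, this says $k^+>m$, i.e.\ no $j>k$ satisfies $|i_j|=|i_k|$. Therefore every simple reflection appearing in $v^{-1}$ has index different from $|i_k|$. Combined with the standard identity $s_n\omega_m=\omega_m-\delta_{mn}\alpha_n$ (which follows from $\langle\omega_m,\alpha_n^\vee\rangle=\delta_{mn}$ and the reflection action formula), each such factor fixes $\omega_{i_k}$; applying them in succession gives $v^{-1}\omega_{i_k}=\omega_{i_k}$, and the required identity $(v^{-1}\omega_{i_k},\omega_{i_p})=(\omega_{i_k},\omega_{i_p})$ drops out.

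I do not anticipate a genuine technical obstacle; the verification is entirely combinatorial once the $W$-invariance reduction is made. The conceptual content of the proposition is simply that the ``last occurrence'' condition defining $[1,m]\setminus\bm{e}(\mathbf{i})$ is precisely the combinatorial translation of the statement that the partial Weyl word $v^{-1}=s_{i_p}\cdots s_{i_{k+1}}$ stabilizes the fundamental weight $\omega_{i_k}$, which is what forces $\lambda_k$ to be Casimir. The only bookkeeping step one must be careful about is the sign/absolute-value convention of Remark \ref{aboutsign} when writing out the reduced expression for $v^{-1}$.
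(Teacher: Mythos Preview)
Your argument is correct. Both you and the paper reduce to showing $(\omega_{i_k},\omega_{i_p})=(u_k\omega_{i_k},u_p\omega_{i_p})$ for $p>k$ using $W$-invariance of the form, so the approaches are essentially the same in spirit.

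There is, however, a genuine difference in how the $W$-invariance is applied, and your version is the cleaner one. The paper argues that since $k\notin\bm{e}(\mathbf{i})$ one also has $p\notin\bm{e}(\mathbf{i})$, and then replaces both $u_k\omega_{i_k}$ and $u_p\omega_{i_p}$ by $w_0\omega_{i_k}$ and $w_0\omega_{i_p}$ respectively. But the implication ``$k$ non-exchangeable and $p>k$ $\Rightarrow$ $p$ non-exchangeable'' is not true in general: e.g.\ for $\mathbf{i}=(1,2,3,1,2,1)$ in type $A_3$ the index $k=3$ is non-exchangeable while $p=4>3$ is exchangeable. Your route sidesteps this entirely: you only use $k\notin\bm{e}(\mathbf{i})$ to conclude that $v^{-1}=s_{i_p}\cdots s_{i_{k+1}}$ fixes $\omega_{i_k}$, which is exactly what is needed and requires no hypothesis on $p$. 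So your argument is not only correct but actually patches a small gap in the paper's presentation.
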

\begin{proof}
	Let $k\in [1,m]\backslash {\bm e}(\mathbf{i})$. By \eqref{bracket1}, we have $\{\lambda_{k},\varphi_p\}=0$ for $p\leqslant k$. All we need to show is $\{\lambda_{k},\varphi_{p}\}=0$ for $p>k$. Since $k\in [1,m]\backslash {\bm e}(\mathbf{i})$, we know $p\in [1,m]\backslash {\bm e}(\mathbf{i})$ as well. Therefore, by \eqref{bracket2},
	\[
		\{\lambda_{k},\varphi_{p}\}=(\omega_{i_k},\omega_{i_p})-(w_0\omega_{i_k},w_0\omega_{i_p})=0,
	\]
	because the bilinear form is $W$-invariant.
\end{proof}

\begin{Thm}\label{sym}
	Let $B$ be the matrix of Poisson brackets
	\[
	B=[\{\lambda_k,\varphi_{s^+}\}]_m,
	\]
	where $k,s\in [-1,-r]\cup {\bm e}(\mathbf{i})$. Then $B$ is of the form $B=DB'$, where
	\[
		D=\diag\left((\alpha_{i_1},\omega_{i_1}),\dots,(\alpha_{i_m},\omega_{i_m})\right)=(1/d_{i_1}, \dots, 1/d_{i_m}),
	\]
	and $B'$ is a upper triangular positive integer matrix with diagonal elements being $1$.
\end{Thm}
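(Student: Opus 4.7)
My plan is to compute the matrix entries of $B$ directly using the explicit Poisson brackets (\ref{bracket1}) and (\ref{bracket2}), and to read off the triangular structure from standard manipulations of reduced expressions. First I would reindex the rows of $B$ by the bijection $k \mapsto k^+$ from $[-1,-r] \cup \mathbf{e}(\mathbf{i})$ onto $[1,m]$, so that both rows and columns are labeled by $j \in [1,m]$. Under the conventions $i_k := |k|$ and $u_k := e$ for $k \in [-r, -1]$ (which make $i_{k^+} = i_k$ in every case), the diagonal of $D$ becomes $D_{jj} = 1/d_{i_k}$ whenever $j = k^+$. For $k < p$, setting $w' := u_k^{-1}u_p$ (which equals the reduced word $s_{i_{k+1}}\cdots s_{i_p}$ for $k \in \mathbf{e}(\mathbf{i})$ and simply $u_p$ for $k \in [-r,-1]$), the $W$-invariance of $(\cdot,\cdot)$ converts (\ref{bracket2}) into
\[
\{\lambda_k, \varphi_p\} = (\omega_{i_k},\, \omega_{i_p} - w'\omega_{i_p}).
\]

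Next I would apply the standard telescoping identity to a reduced factorization $w' = s_{j_1}\cdots s_{j_\ell}$ to get
\[
\omega_{i_p} - w'\omega_{i_p} = \sum_{a:\, j_a = i_p} (s_{j_1}\cdots s_{j_{a-1}})\,\alpha_{i_p},
\]
each summand being a positive root since $s_{j_1}\cdots s_{j_a}$ is reduced. Using $(\omega_{i_k}, \alpha_j) = \delta_{i_k,j}/d_{i_k}$, this yields $\{\lambda_k, \varphi_p\} = c(k,p)/d_{i_k}$, where $c(k,p) \in \mathbb{Z}_{\geqslant 0}$ is the coefficient of $\alpha_{i_k}$ in the simple-root expansion of $\omega_{i_p} - w'\omega_{i_p}$. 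Since $D_{j_1 j_1} = 1/d_{i_k}$ for $j_1 = k^+$, the entries of $B' = D^{-1}B$ are $B'_{j_1 j_2} = c(k, s^+)$, manifestly non-negative integers.

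Finally I would verify the diagonal values and below-diagonal vanishing. On the diagonal ($s = k$, $j_1 = j_2 = k^+$), only $a$ with $j_a = i_{k^+} = i_k$ contributes, namely the last letter, giving $\omega_{i_p} - w'\omega_{i_p} = (s_{i_{k+1}}\cdots s_{i_{k^+-1}})\alpha_{i_k}$. By the defining property of $k^+$, this product involves no $s_{i_k}$, hence lies in the parabolic subgroup $W_{i_k} := \langle s_j : j \neq i_k\rangle$, which preserves the $\alpha_{i_k}$-coefficient in simple-root expansions; thus $c(k,k^+) = 1$ and $B'_{jj} = 1$. For entries below the diagonal ($j_1 > j_2$, i.e.\ $k^+ > s^+$), either $k \geqslant s^+$ and the bracket vanishes by (\ref{bracket1}), or $k < s^+ < k^+$; in the latter case, the letters $i_{k+1}, \ldots, i_{s^+}$ contain no $i_k$, so $w' \in W_{i_k}$ and $i_{s^+} \neq i_k$. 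Every term $(s_{j_1}\cdots s_{j_{a-1}})\alpha_{i_{s^+}}$ is therefore obtained by applying an element of $W_{i_k}$ to a simple root different from $\alpha_{i_k}$, so its $\alpha_{i_k}$-coefficient is zero. Hence $c(k, s^+) = 0$ and $B'$ is upper triangular.

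The main obstacle, such as it is, lies in carefully tracking the interplay between the two indexing conventions (the row label $k$ versus its image $k^+$) and treating uniformly the case $k \in [-r,-1]$ via $u_k = e$. Once this bookkeeping is fixed, the proof rests on two well-known facts: the positivity of $s_{j_1}\cdots s_{j_{a-1}}\alpha_{j_a}$ for reduced expressions, and the invariance of the $\alpha_i$-coefficient under $W_i$. Combined with the defining property of $k^+$, these yield both the triangularity and the integer structure of $B'$.
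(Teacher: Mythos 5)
Your proof is correct and follows essentially the same route as the paper's: the explicit formulas (\ref{bracket1})--(\ref{bracket2}), $W$-invariance of the bilinear form, and the telescoping of $\omega_i - w'\omega_i$ into a nonnegative integer combination of simple roots. The difference worth noting is that you explicitly settle the one case the printed proof skips. The paper shows (i) the diagonal entries are $(\alpha_{i_k},\omega_{i_k})$, (ii) for $k<s^+$ and $s\neq k$ the bracket is $c\,(\alpha_{i_k},\omega_{i_k})$ with $c\in\mathbb{Z}_{\geqslant 0}$, and (iii) the bracket vanishes when $k\geqslant s^+$. But upper triangularity of $B'$ (with rows and columns ordered by $k^+$, as the stated form of $D$ forces) requires vanishing whenever $k^+>s^+$, and this includes the range $k<s^+<k^+$, which genuinely occurs --- e.g.\ $\mathbf{i}=(1,2,1,2)$ in type $A_2$ with $k=1$, $s=-2$ gives $k^+=3>s^+=2>k=1$. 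Your observation that in this range the letters $i_{k+1},\dots,i_{s^+}$ avoid $i_k$, so $w'=u_k^{-1}u_{s^+}\in W_{i_k}$ while $i_{s^+}\neq i_k$, and hence the $\alpha_{i_k}$-coefficient of $\omega_{i_{s^+}}-w'\omega_{i_{s^+}}$ vanishes, is exactly what is needed to close that gap. (A slightly shorter route to the same conclusion: $w'\in W_{i_k}$ fixes $\omega_{i_k}$, so $(\omega_{i_k},w'\omega_{i_{s^+}})=(\omega_{i_k},\omega_{i_{s^+}})$ by $W$-invariance and the bracket is zero outright.) The reindexing $k\mapsto k^+$ and the convention $u_k=e$ for $k\in[-r,-1]$ are likewise what the theorem's statement presupposes and the paper leaves implicit. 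In short: same method, but your write-up is complete where the published proof is terse.
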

\begin{proof}
	Recall that $s_i$ is the simple reflection generated by $\alpha_i$. Note that $s_i\omega_i=\omega_i-\alpha_i$, and $s_j\omega_i=\omega_i$ for $j\neq i$. By \eqref{k+}, we have $\omega_{i_k}=\omega_{i_{k^+}}$. Thus by \eqref{bracket2}:
	\begin{align*}
		\{\lambda_k,\varphi_{k^+}\}&=(\omega_{i_k},\omega_{i_{k^+}})-(\omega_{i_k},s_{i_{k+1}}\cdots s_{i_{k^+}}\omega_{i_{k^+}})\\
		&=(\omega_{i_k},\omega_{i_k})-(\omega_{i_k},s_{i_k}\omega_{i_k})=(\alpha_{i_k},\omega_{i_k})>0.
	\end{align*}
	For $k<s^+$ and $s\neq k$, we have 
	\[
		\{\lambda_k,\varphi_{s^+}\}=\left(\omega_{i_k},\omega_{i_{s^+}}-u_k^{-1}u_{k^+}\omega_{i_{s^+}}\right)=c(\alpha_{i_k},\omega_{i_k})
	\]
	for $c\in \mathbb{Z}_{\geqslant 0}$, because $\omega_i-v\omega_i$ is a non-negative integer linear combination of $\alpha_j$'s and $(\alpha_i,\omega_j)=0$ for $i\neq j$. The bracket $\{\lambda_k,\varphi_{s^+}\}$ vanishes if $k \geqslant s^+$ by $\eqref{bracket1}$. 
\end{proof}

In what follows, we write $\hw^t_{\mathbb{R}}\colon \mathcal{C}_{\sigma(\mathbf{i})}^G(\mathbb{R})\to H^t\otimes \mathbb{R}$ for the natural extension of the tropical map $\hw^t\colon\mathcal{C}_{\sigma(\mathbf{i})}^G\to H^t$ to the real cone.

\begin{Thm}\label{symplectic leaves}
  The symplectic leaves in $PT(K^*,\sigma(\mathbf{i}))=\mathcal{C}_{\sigma(\mathbf{i})}^G(\mathbb{R}) \times (S^1)^m$ are of the form
  \[
    \hw^{-t}_{\mathbb{R}}(\lambda^\vee)\times (S^1)^m,
  \]
  for $\lambda^\vee$ in the positive Weyl chamber of $\mathfrak{g}^\vee$.
\end{Thm}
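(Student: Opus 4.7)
The plan is to exploit the fact that $\pi_{PT}$ is a \emph{constant} Poisson bivector on the affine product $\mathcal{C}^G_{\sigma(\mathbf{i})}(\mathbb{R}) \times (S^1)^m$: the symplectic leaves of such a bracket are precisely the joint level sets of a maximal collection of Casimir functions. So the argument reduces to (a) exhibiting $r$ independent Casimirs, (b) verifying that the rank of $\pi_{PT}$ is indeed $2m$ so that $r$ is the right number, and (c) matching these Casimirs with the components of $\hw^t_\mathbb{R}$.

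For step (a), Proposition \ref{pipt} furnishes $r$ Casimirs, namely $\{\lambda_k : k\in [1,m]\setminus \bm{e}(\mathbf{i})\}$, one for each simple index $i\in\bm{I}$ (the position of the last occurrence of $i$ in the double reduced word $\mathbf{i}$ for $(w_0,e)$). For step (b), Theorem \ref{sym} produces an $m \times m$ matrix $B$ of brackets $\{\lambda_k,\varphi_{s^+}\}$ for $k,s\in [-r,-1]\cup\bm{e}(\mathbf{i})$ that is upper triangular with nonzero diagonal entries $(\alpha_{i_k},\omega_{i_k})>0$, and hence invertible. Since all $\{\lambda,\lambda\}$ and $\{\varphi,\varphi\}$ brackets vanish identically, and since the $r$ coordinates of step (a) have vanishing brackets with everything, the full Poisson matrix has rank exactly $2m$. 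Consequently, the Casimir space is $r$-dimensional and is spanned by the functions of step (a).

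For step (c), observe that if $k\in [1,m]\setminus \bm{e}(\mathbf{i})$ is the last occurrence of the index $i=|i_k|$, then $u_k\omega_{i_k}=w_0\omega_{i_k}$ (the remaining simple reflections in $\mathbf{i}$ after position $k$ fix $\omega_i$), so $\Delta_k = \Delta_{w_0\omega_{i_k},\omega_{i_k}}$. Tropicalizing the characterization \eqref{hwproperty}, we obtain
\[
    \lambda_k \;=\; (\Delta_{w_0\omega_{i_k},\omega_{i_k}})^t \;=\; \bigl\langle w_0\omega_{i_k},\,\hw^t_\mathbb{R}(\,\cdot\,)\bigr\rangle.
\]
Since $\{w_0\omega_i\}_{i\in\bm{I}}$ is a basis of $\mathfrak{h}^*$, the $r$ Casimirs of step (a) jointly determine the value of $\hw^t_\mathbb{R}$. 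Hence the joint level set corresponding to $\hw^t_\mathbb{R} = \lambda^\vee$ is exactly $\hw^{-t}_\mathbb{R}(\lambda^\vee)\times (S^1)^m$. This set is connected (as a fiber of a linear map restricted to the convex cone $\mathcal{C}^G_{\sigma(\mathbf{i})}(\mathbb{R})$, times a connected torus), so it is the full symplectic leaf. Finally, by Theorem \ref{Claim6.9}, the image of $\hw^t$ lies in the dominant coweights of $G$, which are the dominant weights of $G^\vee$, giving exactly the parametrization by $\lambda^\vee$ in the positive Weyl chamber of $\mathfrak{g}^\vee$.

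The main obstacle has already been absorbed into the earlier results, namely the invertibility assertion of Theorem \ref{sym} and the Casimir identification of Proposition \ref{pipt}. The only genuinely new conceptual step is (c), where one must carefully unpack which cluster coordinate $\Delta_k$ is attached to which non-exchangeable index and use $\Delta_{w_0\omega_i,\omega_i}\equiv 1$ on $\widehat{L}^{w_0,e}$ (Proposition \ref{charaofz}) to isolate the $H$-contribution encoded by $\hw$. Everything else is a direct dimension count and connectedness argument for constant Poisson structures on convex cones.
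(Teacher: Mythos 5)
Your proof is correct and follows essentially the same strategy as the paper's, invoking the same ingredients (Proposition \ref{pipt}, Theorem \ref{sym}, and the relation $\langle w_0\omega_i,\hw^t_\mathbb{R}(\cdot)\rangle=\Delta_{w_0\omega_i,\omega_i}^t$ from \eqref{hwproperty}). Your version usefully spells out the rank count for the constant bivector and the connectedness of the fibers, which the paper's terse ``this proves the claim'' leaves implicit.
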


\begin{proof}
  Because of \eqref{1-connectedforall}, we only need to show the statement for simply connected $G$. Recall from \eqref{hwproperty} that
  \[
    \left(\hw(g)\right)^{w_0\omega_i}=\Delta_{w_0\omega_i,\omega_i}(g), \quad \forall g\in G^{w_0,e}, i\in \bm{I}.
  \]
	The tropicalization of $\hw$ with respect to the chart $\sigma(\mathbf{i})$ can then be written as a linear combination of the cocharacters $w_0\alpha^\vee_i\in X_*(H)$, with the tropical functions $\Delta_{w_0\omega_i,\omega_i}$ as coefficients:
	\[
		\hw^t_{\mathbb{R}}= \sum_{i} \lambda_{-i} \cdot (w_0 \alpha^\vee_i) = \sum_{i} \Delta_{w_0\omega_i,\omega_i} ^t\cdot  (w_0 \alpha^\vee_i).
	\]
	Together with the nondegeneracy of the matrix $B$ in Theorem \ref{sym}, this proves the claim.
\end{proof}

\begin{Rmk}
  There is a natural bijection between generic symplectic leaves of the $PT(K^*)$ and generic symplectic leaves of $K^*$: Let $(\cdot)^*$ be the anti-involution which picks out the compact real form $K$. Note that the symplectic leaves on $K^*$ are the level sets of the following Casimir functions \cite{LW}:
  \[
  	C^2_i(b):=\Tr\left(\rho_i\left(bb^*\right)\right), \text{~for~} b\in \im(K^*\hookrightarrow B_-\subset G) \text{~and~} i\in \bm{I},
  \]
  where $\rho_i\colon G\to V_{\omega_i}$ is the fundamental $G$-representation with highest weight $\omega_i$.

  Pick a unitary weight basis for  $V_{\omega_i}$ and let $\rho_i(b)_{j,k}$ be the $(j,k)$-matrix entry of the matrix $\rho_i(b)$ in terms of this basis. The generalized minors $\Delta_{w_0\omega_i,\omega_i}$ satisfy, for $h,h'\in H$ and $E,E'\in \mathfrak{n}$,
  \[
  \Delta_{w_0\omega_i,\omega_i}(hxh') = h^{w_0\omega_i} {h'}^{\omega_i}\Delta_{w_0\omega_i,\omega_i}(x), \qquad E\cdot \Delta_{w_0\omega_i,\omega_i} \cdot E' = 0.
  \]
  Then, computing in terms of the matrix entries of $\rho_i(b)$,
  \begin{align*}
  	C_i^2(b)&=\sum_{j,k}|(\rho_i(b)_{j,k}|^2 =\left|\Delta_{w_0\omega_i,\omega_i}(b)\right|^2+\sum_{\mathbf{j}, \mathbf{k}} \Big|c_{\mathbf{i},\mathbf{j}}\left(F_{\mathbf{j}}\cdot\Delta_{w_0\omega_i,\omega_i}\cdot F_{\mathbf{k}} \right) (b) \Big|^2\\
  	&=\left|\Delta_{w_0\omega_i,\omega_i}(b)\right|^2\left(1+\sum_{\mathbf{j}, \mathbf{k}} \left|\frac{c_{\mathbf{i},\mathbf{j}}\left(F_{\mathbf{j}}\cdot\Delta_{w_0\omega_i,\omega_i}\cdot F_{\mathbf{k}}\right)(b)}{\Delta_{w_0\omega_i,\omega_i}(b)}\right|^2\right).
  \end{align*}
   The second sum is over some non-zero sequences of indices $\mathbf{j}=(j_1,\dots,j_p)$ and $\mathbf{k}=(k_1,\dots,k_q)$. Here $F_{\mathbf{j}}$ is shorthand for $F_{j_i}\cdots F_{j_p}\in U(\mathfrak{g})$, and $c_{\mathbf{i},\mathbf{j}}\in \mathbb{C}$ are constants. 

  Recall the detropicalization map $\mathfrak{L}_s$. By \cite[Theorem 4.13]{ABHL}, under this change of coordinates, 
  \begin{equation}
  \label{tropofleaves}
  	\lim_{s\to -\infty} \frac{1}{s} \log C_i=\Delta^t_{w_0\omega_i,\omega_i} \circ \pr_1= \lambda_{-i}.
  \end{equation}
  As above, taking the limit $s\to -\infty$ (rather than $t\to \infty$) is because we use `$\min$' for the definition of tropicalization, rather than `$\max$' as was our convention in \cite{ABHL}. 
  
  This can be summarized (non-precisely) as follows: \emph{Tropicalization of symplectic leaves of $K^*$ are symplectic leaves of $PT(K^*)$.} In \cite{AHLL2}, three of the authors and Jeremy Lane build symplectic embeddings from any
  symplectic leaf of $PT(K^*)$ into the corresponding coadjoint orbit. These embeddings are not actually globally defined, but they are defined off a subset of arbitrarily small symplectic volume. We hope to establish (globally defined) symplectic embeddings in future work.
\end{Rmk}

\subsection{Comparison of Lattices}

Fix $G$ and $K$ as before, as well as a double reduced word $\mathbf{i}$ for $(w_0,e)$. Let $(K^*, \pi_{K^*})$ be the dual Poisson-Lie group to $K$. If $V$ is a $n$-dimensional real vector space and $D\subset V$, then a \emph{lattice} in $D$ is subset of $D$ of the form $L\cap D$, where $L\cong \mathbb{Z}^n$ is a lattice in $V$.

In this section, we use our results to compare two lattices on $\mathcal{C}_{\sigma(\mathbf{i})}^{G}(\mathbb{R})$. The first lattice comes from the crystal structure on $\mathcal{C}_{\sigma^\vee (\mathbf{i})}^{G^\vee}$. The integrable system on $PT(K^*,\sigma(\mathbf{i}))$ gives us the second lattice $\Lambda$, which will be called {\em Bohr-Sommerfeld} lattice. The lattice $\Lambda$ is built out of the lattice $\psi^{-1}(X^*(H))\subset \mathfrak{h}$ and a lattice in each integral symplectic leaf of $PT(K^*)$.

\begin{Rmk}
The dual Poisson-Lie group $K^*$ is the same for both $K$ and its universal cover $\widehat{K}$. But the definition of both lattices on $PT(K^*)$ involve $X^*(H)\subset\mathfrak{h}^\vee$. Therefore we are thinking of $K^*$ not just as a Poisson-Lie group, but as a Poisson Lie group which is dual to a specific integration of $\mathfrak{k}$. Then $K^*$ carries an action of $K$, which is called the dressing action.
\end{Rmk}

Now we will describe the lattice in the symplectic leaves of $PT(K^*)$. The Bohr-Sommerfeld quantization defines a lattice (integral affine structure) in the tangent spaces to leaves as follows. Assume $\lambda^\vee\in \mathfrak{h}$ is a regular dominant weight of $G$ such that 
\[
  \lambda:=  \psi(\lambda^\vee)\in X^*_+(H)\subset \mathfrak{h}^\vee.
\]
Let $\hw^t\colon \mathcal{C}^G_{\sigma(\mathbf{i})}\to X_*(H)$ be the tropicalization of $\hw\colon G^{w_0,e}\to H$ relative to the chart $\sigma(\mathbf{i})$, and let $\hw^t_{\mathbb{R}}$ be the natural extension of $\hw^t$ to the real cone $\mathcal{C}^G_{\sigma(\mathbf{i})}(\mathbb{R})$.

We write 
\[
  \mathcal{O}(\lambda^\vee):= \hw^{-t}_{\mathbb{R}}(\lambda^\vee)\times (S^1)^m\subset PT(K^*,\sigma(\mathbf{i}))
\]
for the symplectic leaf over $\lambda^\vee$, and let
\[
  \omega_{\lambda^\vee}:=(\pi_{PT}\big|_{\mathcal{O}(\lambda^\vee)})^{-1}
\]
be the symplectic form on $\mathcal{O}(\lambda^\vee)$. Let $\xi\in \hw^{-t}(\lambda^\vee)$ be the unique point in $\mathcal{C}_{\sigma(\mathbf{i})}^{G}(\mathbb{R})$ such that
\[
  \wt^t_\mathbb{R} (\xi)=\hw^t_\mathbb{R} (\xi)=\lambda^\vee.
\]
Consider the lattice $X_*(S^1)^m\subset T_1(S^1)^m$ of cocharacters of $(S^1)^m$; this lattice is generated by 
\[
  \left\{ 2\pi \frac{d}{d\varphi_{k^+}}\ \Big| \ k = 1,\dots, m \right\};
\]
recall that the angles of $PT(K^*)$ are labeled by $k^+$, where $k\in [-r,-1]\cup {\bm e}(\mathbf{i})$. Thus the following
\begin{equation}
    \label{hereisaset}
  \left\{v\in T_\xi \hw^{-t}_{\mathbb{R}}(\lambda^\vee)\mid \omega_{\lambda^\vee}(v,X_*(S^1)^m)\subset 2\pi \mathbb{Z} \right\}
\end{equation}
is a lattice in $T_\xi \hw^{-t}_{\mathbb{R}} (\lambda^\vee)$. The natural identification of $\hw^{-t}_{\mathbb{R}}(\lambda^\vee)$ with a subset of $T_\xi \hw^{-t}_{\mathbb{R}} (\lambda^\vee)$ determines the lattice $\widetilde{\Lambda}$ on $\hw^{-t}_{\mathbb{R}}(\lambda^\vee)$. Alternatively, we can think of the points of the set \eqref{hereisaset} as elements of a (scaled) dual basis to $X_*(S^1)^m$, under the pairing given by the symplectic form. In our choice of coordinates, the symplectic form is described by the matrix $B$ in Theorem \ref{sym}. So another description of the lattice $\widetilde{\Lambda}$ is
\begin{equation}\label{lambda1}
  \widetilde{\Lambda}= \left(\xi+ B \left(\mathbb{Z},\dots,\mathbb{Z} \right)^T\right)\cap \mathcal{C}_{\sigma(\mathbf{i})}^G(\mathbb{R}).
\end{equation}

Together, the lattice in $\mathfrak{h} \cong \mathfrak{h}^*$ and the lattices $\widetilde{\Lambda}$ on the integral symplectic leaves determine the \emph{Bohr-Sommerfeld lattice}
\begin{equation}
    \label{bohrlattice1}
    \Lambda := B(\mathbb{Z},\dots,\mathbb{Z})^T + \left\{\xi\in \mathcal{C}_{\sigma(\mathbf{i})}^{G}(\mathbb{R})\mid \psi\circ \hw^t_\mathbb{R} (\xi) = \psi\circ \wt^t_\mathbb{R}(\xi) =\lambda\in X^*_+(H) \right\}.
\end{equation}
In Appendix \ref{appendix2} the Bohr-Sommerfeld lattices are treated more generally. As a consequence of Lemma \ref{bohrlatticelemma} and Theorem 6.23 of \cite{ABHL}, the Bohr-Sommerfeld lattice on $PT(K^*)$ is independent of the choice of toric chart $\sigma(\mathbf{i})$.

Let us denote the real extension of the comparison map $\psi_{\sigma(\mathbf{i})}$ as
  \[
      \psi_{\sigma(\mathbf{i})}^{\mathbb{R}}\colon \mathcal{C}_{\sigma(\mathbf{i})}^G(\mathbb{R}) \to \mathcal{C}_{\sigma^\vee(\mathbf{i})}^{G^\vee}(\mathbb{R}).
  \]
\begin{Thm}\thlabel{comparisonmapF}
  The comparison map $\psi_{\sigma(\mathbf{i})}$ sends the Bohr-Sommerfeld lattice to the lattice coming from the crystal structure on $\mathcal{C}_{\sigma^\vee (\mathbf{i})}^{G^\vee}$, {\em i.e.} $\psi_{\sigma(\mathbf{i})}^{\mathbb{R}}(\Lambda)=\mathcal{C}_{\sigma^\vee(\mathbf{i})}^{G^\vee}$.
\end{Thm}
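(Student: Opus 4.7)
Proof plan.

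The strategy is to compute both sides explicitly in the cluster chart $\sigma=\sigma(\mathbf{i})$ and observe that they coincide. The two key inputs are (i) the diagonal form of $\psi^{\mathbb{R}}_\sigma$ in cluster coordinates, and (ii) the factorization $B=DB'$ of the Poisson-bracket matrix provided by Theorem \ref{sym}.

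First, by \eqref{com1} and \eqref{clustercompare} (together with Proposition \ref{symmetrizer} for the $H$-factor), $\varPsi_\sigma$ raises each cluster variable $a_k$ to the power $d_{|i_k|}$, so tropically $\psi^{\mathbb{R}}_\sigma$ acts on the $r+m$ cluster coordinates of $\mathcal{C}^G_\sigma(\mathbb{R})$ as the diagonal linear map $(\lambda_k)_k\mapsto (d_{|i_k|}\lambda_k)_k$, where $k$ ranges over $[-r,-1]\cup[1,m]$. Since $\mathcal{C}^{G^\vee}_{\sigma^\vee}$ consists exactly of the integer-valued points of $\mathcal{C}^{G^\vee}_{\sigma^\vee}(\mathbb{R})$ in these coordinates, one has
\[
(\psi^{\mathbb{R}}_\sigma)^{-1}\!\bigl(\mathcal{C}^{G^\vee}_{\sigma^\vee}\bigr)\;=\;\bigl\{(\lambda_k)_k\in \mathcal{C}^{G}_\sigma(\mathbb{R})\,:\,d_{|i_k|}\lambda_k\in\mathbb{Z}\text{ for every }k\bigr\}.
\]

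Next, the plan is to describe $\Lambda$ in the same coordinates and verify a matching description. The $r$ frozen cluster coordinates $\lambda_k$ with $k\in [1,m]\setminus\bm{e}(\mathbf{i})$ correspond to $\Delta_{w_0\omega_{|i_k|},\omega_{|i_k|}}$, and the identity $\Delta_{w_0\omega_{|i_k|},\omega_{|i_k|}}(hz)=h^{w_0\omega_{|i_k|}}$ on $G^{w_0,e}=H\times L^{w_0,e}$ (which uses Proposition \ref{charaofz}) identifies these $r$ coordinates with the components of $\hw^t_{\mathbb{R}}(\xi)$ in the basis $\{w_0\omega_i\}$; the condition $\psi\circ\hw^t_{\mathbb{R}}(\xi)\in X^*(H)$ defining $\Lambda$ therefore translates into $d_{|i_k|}\lambda_k\in\mathbb{Z}$ for these $k$. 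For the remaining $m$ coordinates indexed by $k\in[-r,-1]\cup\bm{e}(\mathbf{i})$, the highest weight point $\xi_\lambda$ has all these coordinates equal to $0$, because the equality $\wt^t_{\mathbb{R}}=\hw^t_{\mathbb{R}}$ forces every ``lowering'' cluster coordinate of the crystal to vanish at its top. Then the condition $\xi-\xi_\lambda\in B\mathbb{Z}^m$ becomes $\xi\in B\mathbb{Z}^m$ on these coordinates. By Theorem \ref{sym}, $B=DB'$ with $D=\diag(d_{|i_k|}^{-1})$ and $B'$ an upper triangular integer matrix with ones on the diagonal, so $\det B'=1$ and hence $B\mathbb{Z}^m=D\mathbb{Z}^m$; this last condition is again $d_{|i_k|}\lambda_k\in\mathbb{Z}$. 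Combining the two sets of conditions, $\Lambda$ coincides with the preimage computed above, which gives the theorem.

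The main obstacle will be the verification that $\xi_\lambda$ sits at the origin of the $m$-dimensional lattice in the cluster chart, that is, that all ``lowering'' cluster coordinates vanish at $\xi_\lambda$. This is intuitively clear from the crystal interpretation (Theorem \ref{Claim6.9}), but a careful justification requires either a direct computation with the generalized minors $\Delta_k$ restricted to $L^{w_0,e}$, or a more conceptual appeal to the decorated geometric crystal structure underlying $\mathcal{C}^G_{\sigma(\mathbf{i})}$. All other steps are essentially bookkeeping, combining the diagonal description of $\varPsi_\sigma$, the factorization of $B$ in Theorem \ref{sym}, and the identification of frozen cluster coordinates with the $H$-factor via Proposition \ref{charaofz}.
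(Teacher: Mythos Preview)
Your proposal is correct and follows essentially the same line as the paper's proof. Both arguments hinge on the two inputs you name: the diagonal action of $\psi_\sigma$ in cluster coordinates (multiplication by $d_{i_k}$) and the factorization $B=DB'$ from Theorem~\ref{sym}, which gives $B\mathbb{Z}^m=D\mathbb{Z}^m$ since $B'$ is unimodular. The paper's proof is simply a compressed version of yours: it writes out $\widetilde{\Lambda}=\bigl(\xi+(0,\dots,0,d_{i_1}^{-1}\mathbb{Z},\dots,d_{i_m}^{-1}\mathbb{Z})\bigr)\cap\mathcal{C}^G_{\sigma(\mathbf{i})}(\mathbb{R})$ directly from \eqref{lambda1} and Theorem~\ref{sym}, then invokes Theorem~\ref{compare} to conclude $\psi^{\mathbb{R}}_{\sigma(\mathbf{i})}(\widetilde{\Lambda})=(\hw^\vee)^{-t}(\lambda)$.

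The obstacle you flag---that the base point $\xi$ maps to an integer point under $\psi_\sigma$---is real, and the paper's terse appeal to Theorem~\ref{compare} glosses over exactly the same issue. A cleaner way to dispatch it than showing the lowering coordinates vanish is to observe that $\psi_\sigma$ intertwines $(\hw^t,\wt^t)$ with $(\hw^{\vee,t},\wt^{\vee,t})$ (this follows from $\varPsi_\sigma=\varPsi^H\times\varPsi^L_\sigma$ and the geometric-crystal compatibility underlying Theorem~\ref{crystalstructure}), so $\psi_\sigma(\xi)$ is the unique point in $(\hw^\vee)^{-t}_{\mathbb{R}}(\lambda)$ with $\wt^{\vee,t}=\hw^{\vee,t}=\lambda$, hence is the highest-weight element of the integral crystal $(\hw^\vee)^{-t}(\lambda)$ and in particular has integer coordinates.
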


\begin{proof}
    From \eqref{lambda1} and Theorem \ref{sym}, we have
    \[
      \widetilde{\Lambda}= \left(\xi+ \left( 0,\dots,0,\frac{1}{ d_{i_1}}\mathbb{Z} , \frac{1}{ d_{i_2}}\mathbb{Z} ,\dots, \frac{1}{ d_{i_m}} \mathbb{Z} \right) \right)\cap \mathcal{C}^G_{\sigma(\mathbf{i})}(\mathbb{R}).
    \]
    From Theorem \ref{compare}, it is then clear that $\psi_{\sigma(\mathbf{i})}^{\mathbb{R}}(\widetilde{\Lambda})= (\hw^\vee)^{-t}(\lambda)$.
\end{proof}

\subsection{Comparison of Volumes}

As an application of the results of the previous section, we compare the symplectic volume of symplectic leaves of $PT(K^*)$ with that of symplectic leaves of $\mathfrak{k}^*$. We continue with the notation of the previous section.

\begin{Def}[Notation] \label{volumenotation}
  Let $L$ be a lattice in $\mathbb{R}^n$. Then $L$ induces a natural translation-invariant measure $\mu_L$ on $\mathbb{R}^n$. For a compact domain $U\subset \mathbb{R}^n$, let $\vol_L(U)$ be the volume of $U$ with respect to $L$. For a symplectic form $\omega$ on $U$, let $\vol_\omega(U)$ be the volume of $U$ with respect to the Liouville form.
\end{Def}
Recall that $\mathcal{O}(\lambda^\vee)= \hw^{-t}_{\mathbb{R}}(\lambda^\vee)\times (S^1)^m$, then
\begin{Pro}
   Using the notation just defined, we have
  \begin{equation}\label{comparevols}
    \frac{1}{(2\pi)^{m}}\vol_{\omega_{\lambda^{\vee}}}(\mathcal{O}(\lambda^\vee))=\vol_{\Lambda}(\hw^{-t}_{\mathbb{R}}(\lambda^\vee)).
  \end{equation} 
\end{Pro}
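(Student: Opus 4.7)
The plan is to compute both sides of \eqref{comparevols} in natural action--angle coordinates on the leaf $\mathcal{O}(\lambda^\vee)=\hw^{-t}_\mathbb{R}(\lambda^\vee)\times(S^1)^m$ and observe that two reciprocal factors of $|\det B|$ cancel, where $B$ is the bracket matrix appearing in Theorem \ref{sym}.

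First I would identify on the leaf $2m$ coordinates of action--angle type. The $m$ action variables are the $\lambda$-coordinates $\lambda_a=(\lambda_k)_{k\in [-r,-1]\cup{\bm e}(\mathbf{i})}$ (these are complementary to the $r$ Casimirs $\lambda_k$ with $k\in [1,m]\setminus{\bm e}(\mathbf{i})$, which pin down the fiber $\hw^{-t}_\mathbb{R}(\lambda^\vee)$ by the identification of the highest weight map used in the proof of Theorem \ref{symplectic leaves}); the angle variables are $\varphi_1,\dots,\varphi_m$. By Theorem \ref{sym} the restriction of $\pi_{PT}$ to the leaf then takes the standard symplectic block form
\[
\pi_{PT}\big|_{\mathcal{O}(\lambda^\vee)}=\begin{pmatrix}0 & B\\ -B^{T} & 0\end{pmatrix},
\]
with $B$ invertible and $|\det B|=1/(d_{i_1}\cdots d_{i_m})$. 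A Pfaffian computation then shows that the Liouville volume form on the leaf equals $|\det B|^{-1}\,d\lambda_a\wedge d\varphi$, so after integrating out the torus factor $(S^1)^m$, I obtain
\[
\vol_{\omega_{\lambda^\vee}}\bigl(\mathcal{O}(\lambda^\vee)\bigr)=\frac{(2\pi)^m}{|\det B|}\,\vol_{\mathrm{Leb}}\bigl(\hw^{-t}_\mathbb{R}(\lambda^\vee)\bigr).
\]

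Second, I would reinterpret the right-hand side of \eqref{comparevols} using \eqref{lambda1}: the Bohr--Sommerfeld lattice on the fiber equals $\widetilde{\Lambda}=(\xi+B\cdot\mathbb{Z}^m)\cap\mathcal{C}_{\sigma(\mathbf{i})}^G(\mathbb{R})$, whose Lebesgue covolume in the action coordinates $\lambda_a$ is exactly $|\det B|$. By Definition \ref{volumenotation} this gives $\vol_\Lambda(U)=|\det B|^{-1}\vol_{\mathrm{Leb}}(U)$ for any compact $U\subset\hw^{-t}_\mathbb{R}(\lambda^\vee)$. Substituting into the previous display cancels the two $|\det B|$ factors and yields \eqref{comparevols}.

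The main obstacle is bookkeeping rather than analysis: I need to verify that the same matrix $B$ simultaneously controls (a) the restriction of the symplectic form to the leaf, and (b) the generators of $\widetilde{\Lambda}$ transverse to $(S^1)^m$. Once conventions (orderings of the index set $[-r,-1]\cup{\bm e}(\mathbf{i})$ and signs in the Pfaffian) are pinned down, no further nontrivial computation is needed --- both occurrences of $B$ are tautological given Theorem \ref{sym} and the definition \eqref{lambda1} of $\widetilde{\Lambda}$.
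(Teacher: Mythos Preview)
Your proposal is correct and follows essentially the same approach as the paper's proof, which simply asserts that the Liouville measure on the leaf factors as $\mu_\Lambda \times (2\pi)^m\mu_{\mathrm{Haar}}$ and then applies Fubini; you have unpacked precisely why that product identity holds by tracking the matrix $B$ through both the symplectic form and the definition \eqref{lambda1} of $\widetilde{\Lambda}$. The cancellation of the two $|\det B|$ factors you identify is exactly the content of the paper's one-line claim, and your remark that the same $B$ governs both sides ``tautologically'' from Theorem \ref{sym} and \eqref{hereisaset}--\eqref{lambda1} is the correct justification.
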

\begin{proof}
    The Liouville measure of $\omega_{\lambda}$ is a product of the translation-invariant measure $\mu_{\Lambda}$ on $\hw^{-t}_{\mathbb{R}}(\lambda^\vee)$ and $(2\pi)^m$ times the normalized Haar measure on $(S^1)^m$. The proposition follows immediately by Fubini theorem.
\end{proof}
As a consequence of \thref{comparisonmapF}, we have
\begin{equation}\label{comparevol2}
  \vol_\Lambda\left( \hw^{-t}_{\mathbb{R}}\left(\psi^{-1} (\lambda)\right)\right)= \vol_{{\hw^\vee}^{-t}(\lambda)} \left((\hw^\vee)^{-t}_{\mathbb{R}} (\lambda)\right).
\end{equation}

Recall the standard Kirillov-Kostant-Souriau Poisson structure $\pi_{\mathfrak{k}^*}$ on $\mathfrak{k}^*$. For a fixed symplectic leaf, denote by $\omega_{\mathfrak{k}^*}$ the corresponding symplectic form.

\begin{Thm}
\label{theoremcomparevolume}
  Let $\psi(\lambda^\vee)=\lambda\in X^*_+(H)$ be a regular dominant integral weight of $G$. The symplectic volume of the symplectic leaf $\mathcal{O}\left(\lambda^\vee \right)\subset PT(K^*,\sigma(\mathbf{i}))$ is equal to the symplectic volume of $\mathcal{O}_{\mathfrak{k}^*}(\sqrt{-1} \lambda)\subset \mathfrak{k}^*$, the leaf through $\sqrt{-1}\lambda\in \mathfrak{t}^*$. That is,
  \[
  \vol_{\omega_{\lambda^\vee}}\left(\mathcal{O}\left( \lambda^\vee \right)\right)= \vol_{\omega_{\mathfrak{k}^*}} (\mathcal{O}_{\mathfrak{k}^*}(\sqrt{-1} \lambda)).
  \]
\end{Thm}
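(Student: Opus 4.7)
The plan is to use \eqref{comparevols}, \eqref{comparevol2}, and \thref{comparisonmapF} to rewrite the symplectic volume of $\mathcal{O}(\lambda^\vee)$ as a lattice polytope volume on the Langlands-dual BK cone, then identify that polytope volume with the leading term of the Weyl dimension polynomial via Ehrhart theory, and finally invoke Kirillov's formula for coadjoint-orbit volumes. Chaining those three earlier results gives
\[
    \vol_{\omega_{\lambda^\vee}}(\mathcal{O}(\lambda^\vee)) = (2\pi)^m \cdot \vol_{(\hw^\vee)^{-t}(\lambda)}\bigl((\hw^\vee)^{-t}_\mathbb{R}(\lambda)\bigr),
\]
so I am reduced to computing the Euclidean volume of the fiber polytope $(\hw^\vee)^{-t}_\mathbb{R}(\lambda)$ with respect to the integer lattice of $\mathcal{C}^{G^\vee}_{\sigma^\vee(\mathbf{i})}$.

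The fiber is a rational convex polytope of dimension $m = |\Phi^+|$ (the ambient lattice has rank $m+r$, while the tropical highest-weight map surjects onto a sublattice of rank $r$ inside $X^*(H)$). By Ehrhart theory the Euclidean volume is the leading coefficient of the lattice-point-counting function under dilation:
\[
    \vol_{(\hw^\vee)^{-t}(\lambda)}\bigl((\hw^\vee)^{-t}_\mathbb{R}(\lambda)\bigr) = \lim_{n\to\infty}\frac{1}{n^m}\#\bigl((\hw^\vee)^{-t}(n\lambda)\bigr).
\]
Applying Theorem \ref{Claim6.9} with the roles of $G$ and $G^\vee$ exchanged identifies this lattice point count with $\dim V_{n\lambda}$, and the Weyl dimension formula asserts that $\dim V_{n\lambda}$ is a polynomial in $n$ of degree $m$ with leading coefficient $\prod_{\alpha\in\Phi^+}(\alpha,\lambda)/(\alpha,\rho)$. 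Hence the polytope volume equals this product, and Kirillov's formula for the Liouville volume of a regular coadjoint orbit,
\[
    \vol_{\omega_{\mathfrak{k}^*}}\bigl(\mathcal{O}_{\mathfrak{k}^*}(\sqrt{-1}\lambda)\bigr) = (2\pi)^m \prod_{\alpha\in\Phi^+}\frac{(\alpha,\lambda)}{(\alpha,\rho)},
\]
closes the chain of equalities.

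The main obstacle is bookkeeping of normalizations. The integer lattice on the $G^\vee$ side used for Ehrhart counting must coincide with the pushforward of the Bohr-Sommerfeld lattice $\Lambda$ under $\psi_{\sigma(\mathbf{i})}$; this is precisely the content of \thref{comparisonmapF}, which identifies the two lattices on the nose rather than up to a scalar. The factor $(2\pi)^m$ produced by Kirillov's formula must match the $(2\pi)^m$ arising from Haar integration over the $(S^1)^m$-factor in \eqref{comparevols}; this works out because $m = \ell(w_0) = |\Phi^+|$ is simultaneously the number of angle coordinates in $PT(K^*,\sigma(\mathbf{i}))$ and the complex dimension of the regular coadjoint orbit.
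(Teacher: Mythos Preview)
Your proof is correct and follows essentially the same route as the paper's: both reduce to the lattice-volume of the fiber $(\hw^\vee)^{-t}_{\mathbb{R}}(\lambda)$ via \eqref{comparevols} and \eqref{comparevol2}, then extract that volume as the leading coefficient of $\dim V_{N\lambda}$ under dilation $N\to\infty$ using Theorem~\ref{Claim6.9} and the Weyl dimension formula, and finish with Kirillov's formula. The only cosmetic difference is that you name the limiting step ``Ehrhart theory'' while the paper writes out the limit $\lim_{N\to\infty}N^{-m}\prod_{\alpha>0}\frac{(N\lambda+\rho,\alpha)}{(\rho,\alpha)}$ directly.
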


\begin{proof}
Let $V_{\lambda}$ be the irreducible $G$-module with highest weight $\lambda$. Recall from Theorem \ref{Claim6.9} that $\dim(V_{\lambda})=\# {\hw^\vee}^{-t}(\lambda)$, the number of lattice points in ${\hw^\vee}^{-t}(\lambda)$. Recall also that Weyl dimension formula is:
\[
  \dim(V_{\lambda})= \prod_{\alpha>0}\frac{(\lambda+\rho,\alpha)}{(\rho,\alpha)},
\]
where $\rho$ is the half-sum of positive roots of $G$. Let $N$ be a positive integer. Then 
\[
  \lim_{N\to\infty} \frac{\# {\hw^\vee}^{-t}(N\lambda)}{\vol_{{\hw^\vee}^{-t}(N \lambda)} \left((\hw^\vee)^{-t}_{\mathbb{R}}(N\lambda)\right)} = 1.
\] 
Also,
\[
  \vol_{{\hw^\vee}^{-t}(N\lambda)} \left((\hw^\vee)^{-t}_{\mathbb{R}}(N\lambda)\right)= N^m\vol_{{\hw^\vee}^{-t}(\lambda)} \left((\hw^\vee)^{-t}_{\mathbb{R}}(\lambda)\right).
\]
Therefore,
\begin{align*}
  \vol_{{\hw^\vee}^{-t}(\lambda)} \left((\hw^\vee)^{-t}_{\mathbb{R}}(\lambda)\right)&= \lim_{N\to\infty}\frac{1}{N^m} \prod_{\alpha>0}\frac{(N\lambda+\rho,\alpha)}{(\rho,\alpha)}\\
  & = \prod_{\alpha>0}\lim_{N\to\infty}\left(\frac{\left(\lambda,\alpha\right)}{(\rho,\alpha)} +\frac{1}{N}\right) \\
  & = \prod_{\alpha>0}\frac{\left(\lambda,\alpha\right)}{(\rho,\alpha)} .
\end{align*}
It is well known that
\begin{equation}
\label{KKSvolume}
 \vol_{\omega_{\mathfrak{k}^*}} (\mathcal{O}_{\mathfrak{k}^*}(\sqrt{-1}\lambda))= (2\pi)^m \prod_{\alpha>0}\frac{\left(\lambda,\alpha\right)}{(\rho,\alpha)},
\end{equation}
see for instance Section 3.5 of \cite{Kirillov}.

Combining \eqref{comparevols}, \eqref{comparevol2}, and \eqref{KKSvolume}, we get the result.
\end{proof}

\begin{Cor}\label{Vol}
    For all $\lambda^\vee\in \psi^{-1}(X^*_+(H)+\rho)$, one has
  \[
  \dim V_{\lambda-\rho}=\left(\prod_{\alpha>0} d_{\alpha}\right)\cdot  \dim V_{\lambda^\vee-\rho^\vee}, \]
  where  $ d_{\alpha}=\frac{2}{(\alpha,\alpha)}$ and $\lambda=\psi(\lambda^\vee)$.
\end{Cor}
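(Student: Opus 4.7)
The plan is to compute $\vol_\Lambda\bigl(\hw^{-t}_\mathbb{R}(\lambda^\vee)\bigr)$ in two different ways, extracting the two dimensions in the formula. On the one hand, by Theorem \ref{theoremcomparevolume} combined with \eqref{comparevols} and the Kirillov--Kostant--Souriau expression \eqref{KKSvolume}, this volume equals $\prod_{\alpha>0}(\lambda,\alpha)/(\rho,\alpha)$, which is $\dim V^G_{\lambda-\rho}$ by Weyl's dimension formula applied to the highest weight $\lambda-\rho$.

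On the other hand, I will compare $\vol_\Lambda$ to the ``naive'' volume $\vol_{\mathbb{Z}^m}$ on the fiber by computing the covolume of the Bohr--Sommerfeld lattice $\widetilde{\Lambda}$. From \eqref{lambda1} this covolume equals $|\det B|$ with $B$ the Poisson-bracket matrix of Theorem \ref{sym}. Its factorization $B=DB'$ with $|\det B'|=1$ and $D=\diag(d_{i_1}^{-1},\ldots,d_{i_m}^{-1})$, combined with the classical bijection $k\mapsto s_{i_1}\cdots s_{i_{k-1}}\alpha_{i_k}$ between $[1,m]$ and $\Phi^+$ and the $W$-invariance of root length (hence of $d_\alpha$), gives $\prod_{k}d_{i_k}=\prod_{\alpha>0}d_\alpha$. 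Consequently
\[
\vol_\Lambda\bigl(\hw^{-t}_\mathbb{R}(\lambda^\vee)\bigr) \;=\; \Bigl(\prod_{\alpha>0}d_\alpha\Bigr)\, \vol_{\mathbb{Z}^m}\bigl(\hw^{-t}_\mathbb{R}(\lambda^\vee)\bigr).
\]

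It remains to identify $\vol_{\mathbb{Z}^m}\bigl(\hw^{-t}_\mathbb{R}(\lambda^\vee)\bigr)$ with $\dim V^{G^\vee}_{\lambda^\vee-\rho^\vee}$. By homogeneity of the piecewise-linear map $\hw^t$ one has $\hw^{-t}_\mathbb{R}(N\lambda^\vee)=N\cdot \hw^{-t}_\mathbb{R}(\lambda^\vee)$, and by Theorem \ref{Claim6.9} the count $\#\hw^{-t}(N\lambda^\vee)$ equals $\dim V^{G^\vee}_{N\lambda^\vee}$. The standard Ehrhart-type asymptotic combined with Weyl's dimension formula for $G^\vee$ then yields the identification, exactly as in the proof of Theorem \ref{theoremcomparevolume} (with $G$ replaced by $G^\vee$). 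Chaining the three identities gives the corollary. The step that will need the most care is the covolume computation: first checking that $\widetilde{\Lambda}$ really has covolume $|\det B|$ inside the ambient integer lattice of the fiber (which follows from \eqref{lambda1}), and second establishing the combinatorial identity $\prod_k d_{i_k}=\prod_{\alpha>0}d_\alpha$ via the reduced-word bijection.
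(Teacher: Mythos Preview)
Your proof is correct and follows essentially the same approach as the paper's first proof. Both arguments extract the scaling factor $\prod_{\alpha>0} d_\alpha=\prod_k d_{i_k}$ via the reduced-word bijection, and both identify the two volumes with the two dimensions through the same Ehrhart-type asymptotic combined with the Weyl dimension formula. The only cosmetic difference is that you package the scaling factor as the covolume $|\det B|$ of the Bohr--Sommerfeld lattice (via Theorem~\ref{sym}), whereas the paper reads it off directly as the determinant of the fiberwise linear map $\psi_{\sigma(\mathbf{i})}^{\mathbb{R}}$; these agree by Theorem~\ref{comparisonmapF} and its proof. The paper also supplies a second, purely algebraic proof via a Weyl-denominator identity (Lemma~\ref{le:denom}), which bypasses the tropical/Poisson machinery entirely.
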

\begin{proof}
    Given a reduced word $\mathbf{i}=(i_1,\dots,i_m)$ of the longest element $w_0$, positive roots can be written in the following order:
    \[
        \alpha_{i_1},\ s_{i_1}\alpha_{i_2},\ \dots,\  s_{i_1}\cdots s_{i_{m-1}}\alpha_{i_m}.
    \]
    Since the bilinear form is $W$-invariant, for positive root $\alpha=s_{i_1}\cdots s_{i_{j-1}}\alpha_{i_j}$, we get:
    \[
        (\alpha,\alpha)=(\alpha_{i_j},\alpha_{i_j}).
    \]
    Then one has $\prod_{\alpha>0} d_\alpha= \prod_{j=1}^{m} d_{i_j}$.
  By Theorem \ref{theoremcomparevolume} and its proof, we get
  \begin{align*}
    \dim V_{\lambda-\rho}&=
    \vol_{{\hw^\vee}^{-t}(\lambda)} \left((\hw^\vee)^{-t}_{\mathbb{R}}(\lambda)\right).
    \end{align*}
    Taking the determinant of $\psi_{\sigma(\mathbf{i})}^{\mathbb{R}}$, one finds
    \begin{align*}
        \vol_{{\hw^\vee}^{-t}(\lambda)} \left((\hw^\vee)^{-t}_{\mathbb{R}}(\lambda)\right) & =\left( \prod_{\alpha>0} d_{\alpha}\right)\cdot \vol_{\hw^{-t}(\lambda^\vee)} \left(\hw^{-t}_{\mathbb{R}}(\lambda^\vee)\right) \\
    & =\left(\prod_{\alpha>0} d_{\alpha}\right)\cdot\dim V_{\lambda^\vee-\rho^\vee}. \tag*{\qedhere}
  \end{align*}
\end{proof}

In the following, we present a direct proof of Corollary \ref{Vol}. Let $\psi(\lambda^\vee)=\lambda\in X^*_+(H)$ and denote $\rho=\frac{1}{2}\sum_{\alpha>0} \alpha$ and $\rho^\vee=\frac{1}{2}\sum_{\alpha>0} \alpha^\vee$ as before. Note  $\psi$ preserves the bilinear forms on $\mathfrak{h}$ and $\mathfrak{h}^*$ and commutes with the $W$-action. 

\begin{Lem} \label{le:denom}
	For each complex semisimple $\mathfrak{g}$ one has for a formal parameter $q$,
	\begin{equation*}
		\prod\limits_{\alpha>0} (q^{\frac{1}{2}\langle\rho^\vee,\alpha\rangle}-q^{-\frac{1}{2}\langle\rho^\vee,\alpha\rangle})=\prod\limits_{\alpha^\vee>0} (q^{\frac{1}{2}\langle \alpha^\vee,\rho\rangle}-q^{-\frac{1}{2}\langle \alpha^\vee,\rho\rangle}).
	\end{equation*}
	In particular, $\prod\limits_{\alpha>0} \langle \rho^\vee,\alpha\rangle=\prod\limits_{\alpha^\vee>0}\langle \alpha^\vee,\rho\rangle$.
\end{Lem}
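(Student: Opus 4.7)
The plan is to reduce the identity to the equality of two multisets of positive integers obtained by reading off the exponents on each side.

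First I interpret both pairings as \emph{heights}. Since $\rho^\vee=\sum_i \omega_i^\vee$ and $\langle \omega_i^\vee,\alpha_j\rangle=\delta_{ij}$, writing a positive root as $\alpha=\sum_i c_i\alpha_i$ gives $\langle \rho^\vee,\alpha\rangle=\sum_i c_i=\operatorname{ht}(\alpha)$. Symmetrically $\langle\alpha^\vee,\rho\rangle=\operatorname{ht}^\vee(\alpha^\vee)$, the height of $\alpha^\vee$ as a positive root of $\mathfrak{g}^\vee$ (expanded in the simple coroot basis). Thus the lemma is equivalent to the multiset equality
\[
    M:=\{\operatorname{ht}(\alpha)\mid \alpha>0\} \;=\; \{\operatorname{ht}^\vee(\alpha^\vee)\mid \alpha^\vee >0\}=:M^\vee.
\]

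The main step is to compute $M$ (and $M^\vee$) via the classical principal $\mathfrak{sl}_2$ theorem of Kostant. I choose a principal $\mathfrak{sl}_2$-triple $(E,H,F)\subset\mathfrak{g}$ with $H=2\rho^\vee$ and decompose the adjoint representation as $\mathfrak{g}=\bigoplus_{i=1}^r V(2m_i)$, where $V(n)$ denotes the $(n+1)$-dimensional irreducible $\mathfrak{sl}_2$-module and $m_1\leqslant\cdots\leqslant m_r$ are the exponents of $\mathfrak{g}$. Since $\operatorname{ad}(\rho^\vee)$ acts on $\mathfrak{g}_\alpha$ by the scalar $\operatorname{ht}(\alpha)$, collecting the positive $\operatorname{ad}(\rho^\vee)$-eigenvalues with multiplicity gives $M=\bigsqcup_{i=1}^r\{1,2,\dots,m_i\}$. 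The same argument applied to $\mathfrak{g}^\vee$ yields $M^\vee=\bigsqcup_{i=1}^r\{1,2,\dots,m_i^\vee\}$. By Chevalley's theorem the exponents depend only on the Weyl group ($m_i+1$ is the $i$-th fundamental degree of $W$), and $W(\mathfrak{g})\cong W(\mathfrak{g}^\vee)$, so $m_i=m_i^\vee$ and $M=M^\vee$.

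Once $M=M^\vee$ is in hand, both sides of the lemma are literally $\prod_{n\in M}(q^{n/2}-q^{-n/2})$, so the identity is immediate. For the ``in particular'' statement I divide both sides by $(q^{1/2}-q^{-1/2})^{|M|}$ and take $q\to 1$; using $(q^{n/2}-q^{-n/2})/(q^{1/2}-q^{-1/2})\to n$ one recovers $\prod_{\alpha>0}\langle\rho^\vee,\alpha\rangle=\prod_{\alpha^\vee>0}\langle\alpha^\vee,\rho\rangle$. The one genuine input is the principal-$\mathfrak{sl}_2$/exponent computation, and this is the only point where the argument goes beyond formal manipulation; a reader who prefers to avoid it can instead verify $M=M^\vee$ via Shapiro's height-multiplicity formula or by inspection of the irreducible root systems.
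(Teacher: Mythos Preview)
Your proof is correct, but it takes a genuinely different route from the paper's.

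The paper argues via the Weyl denominator formula. Writing $e^{\rho}\prod_{\alpha>0}(1-e^{-\alpha})=\sum_{w\in W}(-1)^{\ell(w)}e^{w\rho}$ and applying the specialization $e^{\beta}\mapsto q^{\langle\rho^\vee,\beta\rangle}$ turns the left side into the left side of the lemma and the right side into $\sum_w(-1)^{\ell(w)}q^{\langle\rho^\vee,w\rho\rangle}$. Doing the same for $\mathfrak{g}^\vee$ with the specialization $e^{\beta^\vee}\mapsto q^{\langle\beta^\vee,\rho\rangle}$ produces the right side of the lemma and the sum $\sum_w(-1)^{\ell(w)}q^{\langle w\rho^\vee,\rho\rangle}$. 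These two alternating sums are equal because $\langle\rho^\vee,w\rho\rangle=\langle w^{-1}\rho^\vee,\rho\rangle$ and $W$ is a group; the ``in particular'' follows exactly as you do it.

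You instead reinterpret $\langle\rho^\vee,\alpha\rangle$ and $\langle\alpha^\vee,\rho\rangle$ as heights and prove the stronger statement that the multisets of heights for $\mathfrak{g}$ and $\mathfrak{g}^\vee$ coincide, via Kostant's principal $\mathfrak{sl}_2$ decomposition of the adjoint representation and the fact that the exponents depend only on $W$. This is perfectly valid and in fact yields more (the $q$-identity then follows term by term, not just as an equality of products). The trade-off is that the paper's argument needs only the denominator formula and a one-line symmetry, whereas yours invokes two nontrivial structural theorems (Kostant and Chevalley); on the other hand, your version explains \emph{why} the identity holds at the level of individual roots, and the denominator-formula proof does not.
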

\begin{proof}
	Note we have the following Weyl denominator Formula:
	\[
		e^\rho\prod_{\alpha>0}(1-e^{-\alpha})=\sum_{w\in W} (-1)^{\ell(w)}e^{w\rho}.
	\]
	Let $Q$ be the root lattice and $Q^\vee$ be the coroot lattice of $\mathfrak{g}$. Applying the ring homomorphisms
	\[
		\eta\colon \mathbb{Z}[\frac{1}{2}Q]\to \mathbb{Z}[q^{\pm \frac{1}{2}}]\ :\ e^{\beta}\mapsto q^{\langle\rho^\vee,\beta\rangle}; \quad \eta^\vee\colon \mathbb{Z}[\frac{1}{2}Q^\vee]\to \mathbb{Z}[q^{\pm \frac{1}{2}}]\ :\ e^{\beta^\vee}=q^{\langle\beta^\vee,\rho\rangle}
	\]
	to the Weyl denominator formula for $Q$ and $Q^\vee$, we obtain
	\[
		\prod_{\alpha>0} (q^{\frac{1}{2}\langle\rho^\vee,\alpha\rangle}-q^{-\frac{1}{2}\langle\rho^\vee,\alpha\rangle})=\sum_{w\in W} (-1)^{\ell(w)}e^{\langle \rho^\vee, w\rho\rangle}=\prod\limits_{\alpha^\vee>0} (q^{\frac{1}{2}\langle \alpha^\vee,\rho\rangle}-q^{-\frac{1}{2}\langle \alpha^\vee,\rho\rangle}). 
	\]
	The second assertion follows by dividing both sides with the appropriate power of $q^{\frac{1}{2}}-q^{-\frac{1}{2}}$ and taking the limit as $q\mapsto 1$.
\end{proof}

For $\lambda^\vee\in \mathfrak{h}$ and $\lambda\in \mathfrak{h}^*$, we rewrite the Weyl dimension formula: If $\lambda\in X^*_+(H)+\rho$, then
\[
	\dim V_{\lambda-\rho}=\prod_{\alpha>0}\frac{(\lambda,\alpha)}{(\rho,\alpha)}=\prod\limits_{\alpha^\vee>0} \frac{\langle \alpha^\vee,\lambda\rangle}{\langle \alpha^\vee,\rho\rangle},
\]
where $V_{\lambda-\rho}$ is the irreducible highest weight module with highest weight $\lambda-\rho$. Then:

\begin{proof}[Proof of Corollary \ref{Vol}]
	Indeed, Lemma \ref{le:denom} implies that 
	\[
		\dim V_{\psi\lambda^\vee-\rho}=\prod_{\alpha^\vee>0} \frac{\langle \alpha^\vee,\psi\lambda^\vee\rangle}{\langle \alpha^\vee,\rho\rangle}=\prod_{\alpha^\vee>0} \frac{\langle \psi\alpha^\vee,\lambda^\vee\rangle}{\langle \alpha,\rho^\vee \rangle}=\prod_{\alpha>0} \frac{d_\alpha\langle \alpha,\lambda^\vee\rangle}{\langle \alpha,\rho^\vee\rangle}=\prod_{\alpha>0} d_{\alpha}\cdot\dim V_{\lambda^\vee-\rho^\vee}.
	\]
	The corollary is proved.
\end{proof}

\begin{appendices}
\section{Example: Duality between \texorpdfstring{$B_2$}{B2} and \texorpdfstring{$C_2$}{C2} }\label{B2vsC2}
\label{appendix1}

Note ${\rm SO}_{2n+1}^\vee={\rm Sp}_{2n}$. Let us focus on the case $n=2$. Here we use an alternative description of ${\rm SO}_5$. Denote 
\[
	J_n=
	\begin{bmatrix}
		  &   & 1 \\
		  & \iddots &  \\
		1 &   &   
	\end{bmatrix}. 
\]
The group ${\rm SO}_5$ is isomorphic to 
\[
	G=\{X\in {\rm GL}_5\mid XJ_5X^T=J_5\},
\]
with Lie algebra:
\[
	\mathfrak{g}=\{x\in \mathfrak{gl}(5)\mid x+J_5x^TJ_5=0\}.
\]
Cartan subalgebra: 
\[
	\mathfrak{h}=\{\diag (x_1,x_2,0,-x_2,-x_1)\}.
\]
Borel subalgebra: 
\[
	\mathfrak{b}=\mathfrak{g}\cap \{\text{upper-triangular matrices}\}.
\]
Cartan matrix and a symmetrizer:
\[
	A=
	\begin{bmatrix}
		2 & -1 \\
		-2 & 2
	\end{bmatrix}=
	\begin{bmatrix}
		1 &  \\
		 & 2
	\end{bmatrix}
	\begin{bmatrix}
		2 & -1 \\
		-1 & 1
	\end{bmatrix}, \quad
	D=
	\begin{bmatrix}
		1 &  \\
		 & 2
	\end{bmatrix},
\]
Orthonormal basis in $\mathfrak{h}^*$: 
\[
	\zeta_i: \diag (x_1,x_2,0,-x_2,-x_1)\mapsto x_i.
\]

Simple roots: 
\[
	\alpha_1=\zeta_1-\zeta_2,\quad \alpha_2=\zeta_2.
\] 
Positive roots: 
\[
	\alpha_1,\quad \alpha_2,\quad \alpha_3:=\alpha_1+\alpha_2,\quad \alpha_4:=\alpha_1+2\alpha_2.
\]
Simple coroots: 
\[
	\alpha_1^\vee=\diag(1,-1,0,1,-1); \quad \alpha_2^\vee=\diag(0,2,0,-2,0).
\]
Simple root vectors: 
\[
	F_1=E_{21}-E_{54}; \quad F_2=E_{32}-E_{43}.
\]
Fundamental weights: 
\[
	\omega_1=\alpha_3, \quad \omega_2=\frac{1}{2}\alpha_4.
\]
Fundamental coweights: 
\[
	\omega_1^\vee=\alpha_1^\vee+\frac{1}{2}\alpha_2^\vee, \quad \omega_2^\vee=\alpha_1^\vee+\alpha_2^\vee.
\]
Character lattice of the maximal torus:
\[
	X^*(H)=\mathbb{Z}\{\alpha_1,\alpha_2\}.
\]
Cocharacter lattice of the maximal torus:
\[
	X_*(H)=\mathbb{Z}\{\omega_1^\vee,\omega_2^\vee\}.
\]
Weyl group: 
\[
	W=S_2\ltimes \mathbb{Z}_2, \text{~with generator~} s_1, s_2 \text{~satisfying~} (s_1s_2)^4=1.
\]
The longest element: 
\[
	w_0=(s_1s_2)^2=(s_2s_1)^2.
\]

Now let us compute the BK potential and the BK cone. Note that the lift of $s_i$ to $G$ is given by:
\[
	\overline{s_1}=P_1P_4;\quad \overline{s_2}=P_2P_3P_2; \quad \text{~where~} P_i = E_{i,i+1}-E_{i+1,i}. 
\]
And note $(\overline{s_1}\overline{s_2})^2=P_1P_2P_3P_4P_1P_2P_3P_1P_2P_1$. Let
\[
	\bm{x}=\exp\left(\ln(x_1)\omega_1^\vee+\ln(x_2)\omega_2^\vee\right)
\]
and
\[
 	x_{-1}(t)=
 	\begin{bmatrix}
		t^{-1} & & & & \\
		1 & t & &  &\\
		& & 1 && \\
		&&& t^{-1} &\\
		&&& -1& t
	\end{bmatrix};\quad
	x_{-2}(t)=
 	\begin{bmatrix}
		1 & & & & \\
		  & t^{-2} & &  &\\
		 & t^{-1} & 1 &  & \\
		 &-\frac{1}{2}&-t &t^2 &\\
		 &&&& 1
	\end{bmatrix}.
\]
Then for the longest word $(s_1s_2)^2$, generic elements of the double Bruhat cell $G^{w_0,e}$ can be written: 
\[
	\bm{x}x_{-1}(t_1)x_{-2}(t_2)x_{-1}(t_3)x_{-2}(t_4)\in G^{w_0,e}.
\]
This element is equal to
\[
	\bm{x}
	\begin{bmatrix}
		\dfrac{1}{t_1 t_3} &  &  &  &  \\[0.4cm]
		\dfrac{t_1}{t_2^2}+\dfrac{1}{t_3} & \dfrac{t_1 t_3}{t_2^2 t_4^2} &  &  &  \\[0.4cm]
		\dfrac{1}{t_2} & \dfrac{t_3}{t_2t_4^2}+\dfrac{1}{t_4} & 1 &  &  \\[0.4cm]
		-\dfrac{1}{2 t_1} & -\dfrac{\left(t_3+t_2 t_4\right){}^2}{2 t_1 t_3 t_4^2} & -\dfrac{t_2 \left(t_3+t_2 t_4\right)}{t_1 t_3} & \dfrac{t_2^2 t_4^2}{t_1 t_3} &  \\[0.4cm]
		\dfrac{1}{2} & \dfrac{1}{2} \left(\dfrac{\left(t_3+t_2 t_4\right){}^2}{t_3 t_4^2}+t_1\right) & \dfrac{t_4 t_2^2}{t_3}+t_2+t_1 t_4 & -\dfrac{\left(t_2^2+t_1 t_3\right) t_4^2}{t_3} & t_1 t_3 \\
	\end{bmatrix}.
\]
Thus the potential is
\[
	\left(t_1+\frac{\left(t_3+t_2 t_4\right)^2}{t_3t_4^2}\right)+t_4+
	x_1\cdot \frac{1}{t_1}+x_2\left(\frac{1}{t_4}+\frac{t_2^2+t_1t_3}{t_2t_3}\right),
\]
which gives us the cone cut out by the inequalities:
\begin{align}\label{SO5}
	\begin{split}
		x_1\geqslant t_1 &\geqslant 0;\\
		x_2 \geqslant t_4 &\geqslant 0;\\
		2t_2\geqslant t_3 \geqslant 2t_4 &\geqslant 0;\\
		x_2 &\geqslant t_2-t_1;\\
		x_2 &\geqslant t_3-t_2.
	\end{split}	
\end{align}

Now let us describe ${\rm Sp}_4$ as the dual of ${\rm SO}_5$. Denote
\[
	J'_{2n}=
	\begin{bmatrix}
		  & J_n \\
		-J_n &     
	\end{bmatrix}. 
	\]
The group ${\rm Sp}_4$ is isomorphic to 
\[
	G^\vee =\{X\in {\rm GL}_4\mid XJ'_4X^T=J'_4\}
\]
with Lie algebra:
\[
	\mathfrak{g}^\vee=\{x\in \mathfrak{gl}(4)\mid x-J'x^TJ'=0\}.
\]
Cartan subalgebra: 
\[
	\mathfrak{h}=\{\diag (x_1,x_2,-x_2,-x_1)\}.
\]
The Borel subalgebra:
\[
	\mathfrak{b}^\vee =\mathfrak{g}\cap \{\text{upper-triangular matrices}\}.
\]
Orthonormal basis in $(\mathfrak{h}^\vee)^*$:
\[
	\zeta_i^\vee: \diag (x_1,x_2,-x_2,-x_1)\mapsto x_i.
\]
Simple roots:
\[
	\beta_1=\zeta_1^\vee-\zeta_2^\vee,\quad \beta_2=2\zeta_2^\vee.
\]
Positive roots:
\[
	\beta_1,\quad \beta_2,\quad \beta_3:=2\beta_1+\beta_2,\quad \beta_4:=\beta_1+\beta_2.
\]
Simple coroots of $\mathfrak{g}^\vee$ are given by:
\[
	\beta_1^\vee=\diag(1,-1,1,-1); \quad \beta_2^\vee=\diag(0,1,-1,0);
\]
Simple root vectors: 
\[
	F_1=E_{21}-E_{43}; \quad F_2=E_{32}.
\]
Fundamental weights:
\[
	\kappa_1=\frac{1}{2}\beta_3, \quad \kappa_2=\beta_4.
\]
Fundamental coweights: 
\[
	\kappa_1^\vee=\beta_1^\vee+\beta_2^\vee, \quad \kappa_2^\vee=\frac{1}{2}\beta_1^\vee+\beta_2^\vee.
\]
Character lattice of the maximal torus:
\[
	X^*(H^\vee)=\mathbb{Z}\{\kappa_1,\kappa_2\}.
\]
Cocharacter lattice of the maximal torus:
\[
	X_*(H)=\mathbb{Z}\{\beta_1^\vee,\beta_2^\vee\}.
\]

To calculate the potential for $G^\vee$, we need the lift of $s_i$ to $G^\vee$:
\[
	\overline{s_1}=P_1P_3;\quad \overline{s_2}=P_2; \quad \text{~where~} P_i = E_{i,i+1}-E_{i+1,i}. 
\]
Note $(\overline{s_1}\overline{s_2})^2=P_1P_2P_3P_1P_2P_1$. Let
\[
	\bm{y}^\vee=\exp\left( \ln(y_1)\beta_1^\vee+\ln(y_2)\beta_2^\vee\right)
\]
and
\[
 	x_{-1}^\vee(t)=
 	\begin{bmatrix}
		t^{-1} &  & & \\
		1  & t & &  \\
		 && t^{-1} &\\
		 &&-1& t
	\end{bmatrix};\quad
	x_{-2}^\vee(t)=
 	\begin{bmatrix}
		1 &  & & \\
		  & t^{-1} &  &  \\
		 &1& t &\\
		 &&& 1
	\end{bmatrix}.
\]
Then for the longest word $(s_1s_2)^2$, generic elements of the double Bruhat cell $G^{\vee;e,w_0}$ can be written: 
\[
	\bm{y}^\vee x_{-1}^\vee(t_1)x_{-2}^\vee(t_2)x_{-1}^\vee(t_3)x_{-2}^\vee(t_4)\in G^{\vee;e,w_0}.
\]
This element is equal to
\[
	\bm{y}^\vee
	\begin{bmatrix}
		\dfrac{1}{t_1 t_3} &  &  &  \\[0.4cm]
		\dfrac{t_1}{t_2}+\dfrac{1}{t_3} & \dfrac{t_1 t_3}{t_2 t_4} &  &  \\[0.4cm]
		\dfrac{1}{t_1} & \dfrac{t_2}{t_1 t_3}+\dfrac{t_3}{t_1 t_4} & \dfrac{t_2 t_4}{t_1 t_3} &  \\[0.4cm]
		-1 & -t_1-\dfrac{t_2}{t_3}-\dfrac{t_3}{t_4} & \left(-t_1-\dfrac{t_2}{t_3}\right) t_4 & t_1 t_3 \\
	\end{bmatrix}.
\]
Thus the potential is
\[
	\left(t_1+\frac{t_2}{t_3}+\frac{t_3}{t_4}\right)+t_4+
	\frac{y_1^2}{y_2}\cdot\frac{1}{t_1}+ \frac{y_2^2}{y_1^2}\left(\frac{(t_1t_3+t_2)^2}{t_2t_3^2}+\frac{1}{t_4}\right),
\]
which gives us the cone cut out by the following inequalities:
\begin{align}\label{SP4}
	\begin{split}
		2y_1-y_2\geqslant t_1 &\geqslant 0;\\
		2y_2-2y_1 \geqslant t_4 &\geqslant 0;\\
		t_2\geqslant t_3 \geqslant t_4 &\geqslant 0;\\
		2y_2-2y_1 &\geqslant t_2-2t_1;\\
		2y_2-2y_1 &\geqslant 2t_3-t_2.
	\end{split}
\end{align}

Recall that $\psi: X_*(H)\to X^*(H)$ is given by:
\[
	x_1\omega_1^\vee+x_2\omega_2^\vee\mapsto (x_1+x_2)\alpha_1+(x_1+2x_2)\alpha_2.
\]
Then the map $\psi_{\mathbf{i}}: \mathcal{L} \to \mathcal{L}^\vee$ is given by:
\[
	(x_1,x_2;t_1,t_2,t_3,t_4)\mapsto (x_1+x_2,x_1+2x_2;t_1,2t_2,t_3,2t_4).
\]
Thus it easy to see, after replacing $(y_1,y_2;t_1,t_2,t_3,t_4)$ by $(x_1+x_2,x_1+2x_2;t_1,2t_2,t_3,2t_4)$, that the real cone defined by \eqref{SP4} is the real cone defined by \eqref{SO5}.

\section{Bohr-Sommerfeld lattices and tropical Poisson varieties}\label{appendix2}

In this section we extend the notion of a Bohr-Sommerfeld lattice to the category of tropical Poisson varieties, which we called $\mathbf{PTrop}$ in \cite{ABHL}. We will actually consider the category $\mathbf{DecPTrop}$ of \emph{decorated tropical Poisson varieties}. An object of $\mathbf{DecPTrop}$ is a tuple $(\mathcal{C}\times T, X^t, \pi, \hw, P)$, where 
\begin{itemize}
  \item $T$ is a connected subgroup of $(S^1)^n$;
  \item $X^t = \Hom(S^1,(S^1)^n)\cong \Hom (\mathbb{C}^\times,(\mathbb{C}^\times)^n)=((\mathbb{C}^\times)^n)^t$;
  \item $\mathcal{C}\subset X^t\otimes \mathbb{R}$ is an open rational polyhedral cone in $X^t\otimes \mathbb{R}^n$;
  \item $\pi$ is a constant Poisson bivector on $\mathcal{C}\times T$, and the projections to $\mathcal{C}$ and $T$ (both equipped with the zero Poisson structure) are Poisson maps;
  \item $P\cong \mathbb{Z}^{m-\dim(T)}$ is a free abelian group;
  \item $\hw\colon X^t\to P$ is a $\mathbb{Z}$-linear map, so that fibers of the induced map $\hw_{\mathbb{R}}\circ \pr_1\colon \mathcal{C}\times T\to P\otimes \mathbb{R}$ are the symplectic leaves of $(\mathcal{C}\times T,\pi)$.
\end{itemize}

An example of a decorated tropical Poisson variety is 
\[
  \left(PT(K^*),(G^{w_0,e})^t, \pi_{PT}, \hw^t, X_*(H)\right).
\]
where tropicalization is taken with respect to the chart $\sigma(\mathbf{i})$ given by \eqref{clusterfor1connected}.

An arrow in $\mathbf{DecPTrop}$ is a pair
\[
  (f,g)\colon (\mathcal{C}\times T,X^t, \pi, \hw, P) \to (\mathcal{C}'\times T',{X'}^t, \pi', \hw', P')
\]
where $f\colon X^t\to {X'}^t$ is a piecewise $\mathbb{Z}$-linear map which is homogeneous in the sense that $f(nx)=nf(x)$ for $n\in \mathbb{Z}_{\geqslant 0}$, and $g\colon P\to P'$ is a $\mathbb{Z}$-linear map so that $\hw'\circ f=g\circ \hw$. We require that $f$ induces a map of cones $f\colon \mathcal{C}\to \mathcal{C}'$, and, on each open linearity  chamber $C\subset \mathcal{C}$ of $f$, the naturally induced map $C\times (S^1)^n\to f(C)\times (S^1)^{n'}$ restricts to a Poisson map $f:C\times T\to f(C)\times T'$.

There is an obvious forgetful functor from $\mathbf{DecPTrop}$ to $\mathbf{PTrop}$. 

\begin{Def}
  For a point $\lambda\in P$, consider the fiber $\mathcal{C}_\lambda:=\hw_{\mathbb{R}}^{-1}(\lambda)$. For each $x\in \mathcal{C}_\lambda\cap X^t$, there is a lattice coming from $\pi$ in the tangent space $T_x \mathcal{C}_\lambda$, as in \eqref{hereisaset}. We can realize this as a subset $\Lambda_x$ of $\mathcal{C}_\lambda$.
  A decorated tropical Poisson variety is \emph{quantizable} if, for any $\lambda\in P$ and any $x,y\in \mathcal{C}_\lambda\cap X^t$, one has $ \Lambda_x=\Lambda_y$. If $\mathcal{C}\times T$ is quantizable, define the \emph{Bohr-Sommerfeld lattice} as
  \[
    \Lambda:= \bigcup_{x\in \hw^{-1}(P)} \Lambda_x.
  \]
\end{Def}
Our example $PT(K^*)$ is quantizable, and that this definition of $\Lambda$ agrees with the one in \eqref{bohrlattice1}.

Because $\pi$ is assumed to be constant, to check that $\mathcal{C}\times T$ is quantizable it is enough to check that, for some $\lambda$ in the image of $\hw$ which is sufficiently far from $0$, that
\begin{equation}\label{criterion}
  X^t\cap \mathcal{C} \subset \Lambda_x.
\end{equation}
\begin{Lem}\label{bohrlatticelemma}
  Let $(f,g)$ be an isomorphism of decorated tropical Poisson varieties:
  \[
    (f,g)\colon (\mathcal{C}\times T,X^t, \pi, hw, P) \to (\mathcal{C}'\times T',{X'}^t, \pi', hw', P').
  \]
  Assume $\mathcal{C}\times T$ is quantizable. Then $\mathcal{C}'\times T'$ is quantizable. If $\Lambda'$ denotes the Bohr-Sommerfeld lattice of $\mathcal{C}'\times T'$, then $f(\Lambda)=\Lambda'$.
\end{Lem}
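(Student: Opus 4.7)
The plan is to exploit the chamber structure of $f$ together with the quantizability hypothesis on the source. Since $(f,g)$ is an isomorphism, $(f^{-1},g^{-1})$ is also a morphism of decorated tropical Poisson varieties, so it suffices to prove quantizability of $\mathcal{C}'\times T'$ and the inclusion $f(\Lambda)\subset \Lambda'$; the reverse inclusion then follows by symmetry.

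Fix a linearity chamber $C\subset \mathcal{C}$ of $f$. The restriction $f\colon C\times T\to f(C)\times T'$ is a Poisson isomorphism onto its image with linear part $Df|_C\colon X^t\otimes \mathbb{R}\to {X'}^t\otimes \mathbb{R}$. Two consequences follow. First, because the torus homomorphism induced by $f$ on $C$ is determined by $Df|_C$, this linear map restricts to a bijection $X_*(T)\to X_*(T')$. Second, for each $\lambda\in P$, $Df|_C$ sends $C\cap \mathcal{C}_\lambda$ into $f(C)\cap \mathcal{C}'_{g(\lambda)}$ and, being Poisson, pulls back $\omega'_{g(\lambda)}$ to $\omega_\lambda$ on these leaf pieces. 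Together these imply that $Df|_C$ identifies the translation lattice $L$ defining $\Lambda_x$ with its counterpart $L'$ defining $\Lambda'_{f(x)}$.

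To globalize, given $x\in X^t\cap \mathcal{C}_\lambda$ and $z\in \Lambda_x$, I would connect $x$ to $z$ by a chain $x=z_0,z_1,\dots,z_n=z$ of points in $x+L$ with each consecutive pair $z_i,z_{i+1}$ lying in a common chamber $C_i$. Since $z_{i+1}-z_i\in L$, the local result gives $f(z_{i+1})-f(z_i)=Df|_{C_i}(z_{i+1}-z_i)\in L'$, and telescoping yields $f(z)-f(x)\in L'$; hence $f(\Lambda_x)\subset \Lambda'_{f(x)}$. Specializing to two integer points $x,y\in X^t\cap \mathcal{C}_\lambda$ (which satisfy $y-x\in L$ by source quantizability) gives $f(y)\in \Lambda'_{f(x)}$, and since $f$ maps $X^t\cap \mathcal{C}_\lambda$ bijectively onto ${X'}^t\cap \mathcal{C}'_{g(\lambda)}$, this yields quantizability of $\mathcal{C}'\times T'$. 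Taking unions over all base points and leaves, and applying the same argument to the inverse, produces $f(\Lambda)=\Lambda'$. The main obstacle is constructing the connecting chain: the rationality of $\mathcal{C}$ and of the chamber decomposition of $f$, combined with $L$ being a full-rank lattice in each leaf, makes it possible to move from chamber to chamber in $L$-sized jumps, but the precise combinatorial verification is the delicate step.
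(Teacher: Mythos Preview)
Your local analysis on a single linearity chamber is fine and matches the paper's: on each open cone $C$, the linear map $Df|_C$ carries $X_*(T)$ bijectively to $X_*(T')$ and is a symplectomorphism leafwise, so it takes the tangent-space lattice $L$ to $L'$, giving $f(\Lambda_x\cap C)=\Lambda'_{f(x)}\cap f(C)$.

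The gap you yourself flag in the chain construction is a real one, and the paper does not attempt it. Your approach tries to propagate the chamber-local identity across walls by hopping along $x+L$; but there is no a~priori relation between the lattice $L$ and the rational walls of $f$, so the existence of such a chain inside a fixed leaf $\mathcal{C}_\lambda$ is not automatic. More importantly, this machinery is unnecessary.

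The paper instead leverages the constancy of $\pi'$ through the criterion stated just before the lemma: to verify that $\mathcal{C}'\times T'$ is quantizable it suffices to check $X'^t\cap \mathcal{C}'_\lambda\subset \Lambda'_{f(x)}$ for a \emph{single} $\lambda$ far from $0$ and a \emph{single} $x$ deep inside one chamber $C$. Choosing $x$ far from $\partial C$ ensures $f(C_\lambda)-f(x)$ contains a generating set of the leaf lattice $\ker(\hw')\cap X'^t$, so the chamber-local containment $X'^t\cap f(C_\lambda)\subset \Lambda'_{f(x)}\cap f(C_\lambda)$ already forces the global containment. No chain across chambers is needed. Once both sides are known to be quantizable, each $\Lambda\cap \mathcal{C}_\lambda$ is a single $L$-coset and each $\Lambda'\cap \mathcal{C}'_{g(\lambda)}$ is a single $L'$-coset, so the chamber-by-chamber equality $f(\Lambda_x\cap C)=\Lambda'_{f(x)}\cap f(C)$ immediately assembles to $f(\Lambda\cap C)=\Lambda'\cap f(C)$ for every $C$, and hence to $f(\Lambda)=\Lambda'$.

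In short: replace the chain argument by the single-point criterion for quantizability, and then use quantizability of both sides to make the chamber-local statement global.
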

\begin{proof}
  Without loss of generality assume that $P=P'$ and $g=\id$. First, we show that $\mathcal{C}'\times T'$ is quantizable. Let $\lambda\in P$ be in the image of $\hw$ and far from $0$. Pick some point $x\in \mathcal{C}_\lambda \cap X^t$ which is inside an open linearity chamber $C$ of $f$ and far from the boundary of $C$; note that $C$ is a cone because $f$ is homogeneous. Let $C_\lambda= C\cap \mathcal{C}_\lambda$, and let $L=C_\lambda \times T$. Then $f$ induces a symplectomorphism of $L$ onto its image, and we have $f(\Lambda_x\cap C_\lambda)=\Lambda_{f(x)}'\cap f(C_\lambda)$. Since $f$ is an isomorphism, one has $f(X^t\cap C_\lambda)={X'}^t\cap f(C_\lambda)$. And since $\mathcal{C}\times T$ is quantizable, one has $X^t \cap C_\lambda \subset \Lambda_x \cap C_\lambda$. Therefore ${X'}^t \cap f(C_\lambda)\subset \Lambda_{f(x)}'\cap f(C_\lambda)$. Since we chose $x$ to be far from the boundary of $C$, we can extend to all of $\mathcal{C}'_\lambda$, and we have
  \[
    {X'}^t\cap \mathcal{C}'_\lambda \subset \Lambda_{f(x)}'.
  \]
  By the criterion \eqref{criterion}, since $\pi'$ is constant, this tells us that $\mathcal{C}'\times T$ is quantizable. 

  Now, let $\Lambda'$ be the Bohr-Sommerfeld lattice of $\mathcal{C}'\times T'$. We will show that $f(\Lambda)=\Lambda'$. It is enough to check that, for each open linearity cone $C$ of $f$, that $f(\Lambda\cap C)=\Lambda'\cap f(C)$; one can then extend to the boundary of $C$ and $f(C)$ by linearity. For this it is enough to check that $f(\Lambda_x\cap C)=\Lambda'_{f(x)}\cap f(C)$ for all $x\in C\cap X^t$. But this follows from $f$ inducing a Poisson isomorphism from $C\times T$ to its image.
\end{proof}

\end{appendices}

\newpage

\Addresses


\begin{thebibliography}{10}
\bibitem{ABHL}
A. Alekseev, A. Berenstein, B. Hoffman, and Y. Li, \emph{Poisson Structures and Potentials}, arXiv: 1709. 09281.

\bibitem{AHLL} A. Alekseev, B. Hoffman, J. Lane, and Y. Li, \emph{Concentration of symplectic volumes on Poisson homogenous spaces}, arXiv: 1808.06975.

\bibitem{AHLL2}
A. Alekseev, B. Hoffman, J. Lane, and Y. Li, work in progress.

\bibitem{BFZ}
A. Berenstein, S. Fomin, and A. Zelevinsky, \emph{Cluster algebras III: Upper bounds and double Bruhat cells}, Duke Math. J. 126 (2005), no. 1, 1-52, DOI 10.1215/S0012-7094-04-12611-9.

\bibitem{BKII}
A. Berenstein, D. Kazhdan, \emph{Geometric and unipotent crystals II: From unipotent bicrystals to crystal bases quantum groups}, Contemp. Math., vol. 433, Amer. Math. Soc., Providence, RI, 2007, pp. 13-88.

\bibitem{BZ01}
A. Berenstein, A. Zelevinsky, \emph{Tensor product multiplicities, canonical bases and totally positive varieties}, Invent. Math. 143, 77-128 (2001).

\bibitem{BZ05}
A. Berenstein and A. Zelevinsky, \emph{Quantum cluster algebras}, Adv. Math., vol. 195, 2(2005), 405-455.

\bibitem{FH}
E. Frenkel, D. Hernandez, \emph{Langlands duality for finite-dimensional representations of quantum affine algebras}, Lett. Math. Phys. 96 (2011), no. 1-3, 217-261. 

\bibitem{FG2}
V. V. Fock, A. B. Goncharov,
\emph{Cluster ensembles, quantization and the dilogarithm}, Ann. Sci. Ec. Norm. Sup\'er. (4) 42 (2009), no. 6, 865-930.

\bibitem{FG}
V. V. Fock, A. B. Goncharov, \emph{The quantum dilogarithm and representations of quantum cluster varieties}, Invent. Math. 175 (2009), no. 2, 223-286. 

\bibitem{FZ}
S. Fomin, A. Zelevinsky, \emph{Double Bruhat cells and total positivity}, J. Amer. Math. Soc. 12 (1999), no. 2, 335-380.


\bibitem{GSV}
M. Gekhtman, M. Shapiro, A. Vainshtein, \emph{Cluster algebras and Poisson geometry}, in: Mathematical Surveys and Monographs, vol. 167, Amer. Math. Soc., 2010.

\bibitem{GW}
V. L. Ginzburg and A. Weinstein, \emph{Lie-Poisson structure on some Poisson Lie groups}, J. Amer. Math.
Soc. 5 (1992), no. 2, 445-453.

\bibitem{Humph}
J. Humphreys, \emph{Introduction to Lie Algebras and Representation Theory}, Springer Verlag, 1972.

\bibitem{Kac}
V. Kac, \emph{Infinite Dimensional Lie Algebras (3rd ed.)}, Cambridge Univ. Press, 1990.

\bibitem{MK93}
M. Kashiwara, \emph{The crystal base and Littelman's refined Demazure character formula}, Duke Math. J. 71 (1993), no. 3, 839-858.

\bibitem{MK95}
M. Kashiwara, \emph{On crystal bases}, Representations of groups (Banff, AB, 1994), CMS Conf. Proc., vol. 16, Amer. Math. Soc. (1995), pp. 155-197.

\bibitem{MK96}
M. Kashiwara, \emph{Similarity of crystal bases}, Lie algebras and their representations (Seoul, 1995), Contemp. Math., vol. 194, Amer. Math. Soc. (1996), pp. 177-186.

\bibitem{Kirillov}
A. A. Kirillov, \emph{Lectures on the Orbit Method}, Amer. Math. Soc. (2004).

\bibitem{KZ}
M. Kogan, A. Zelevinsky, \emph{On symplectic leaves and integrable systems in standard complex semisimple Poisson-Lie groups}, Int. Math. Res. Not. 32 (2002) 1685-1702.

\bibitem{GL}
G. Lusztig, \emph{Canonical bases arising from quantized enveloping algebras}, J. of the Amer. Math. Soc. (1990), 3 (2): 447-498.

\bibitem{KMc}
K. McGerty, \emph{Langlands duality for representations and quantum groups at a root of unity}, Comm. Math. Phys. 296 (2010), no. 1, 89-109.

\bibitem{LW}
J. H. Lu, A. Weinstein, \emph{Poisson-Lie groups, dressing transformations and Bruhat decompositions}, J. Differential Geom. 31 (1990), no.2, 501-526.

\bibitem{TN}
T. Nakashima, \emph{Decorated geometric crystals, polyhedral and monomial realizations of crystal bases}, J. of Algebra, vol. 399 (2014), pp. 712-769.

\end{thebibliography}
\end{document}